\setlist[itemize]{font = \upshape, before = \leavevmode}
\setlist[enumerate]{font = \upshape, before = \leavevmode}
\setlist[description]{before = \leavevmode}
\numberwithin{equation}{section}
\theoremstyle{plain}
\newtheorem*{theorem*}{Theorem}
\newtheorem*{corollary*}{Corollary}
\newtheorem{theorem}{Theorem}[section]
\newtheorem{lemma}[theorem]{Lemma}
\crefname{lemma}{Lemma}{Lemmas}
\newtheorem{sblemma}[theorem]{Sublemma}
\crefname{sblemma}{Sublemma}{Sublemmas}
\newtheorem{proposition}[theorem]{Proposition}
\newtheorem*{proposition*}{Proposition}
\crefname{proposition}{Proposition}{Propositions}
\newtheorem{corollary}[theorem]{Corollary}
\theoremstyle{definition}
\newtheorem{definition}[theorem]{Definition}
\newtheorem{example}[theorem]{Example}
\theoremstyle{remark}
\newtheorem{remark}[theorem]{Remark}
\newcommand{\Spec}{\operatorname{Spec}}
\newcommand{\colim}{\operatorname{colim}}
\newcommand{\Sch}{\mathrm{Sch}}
\newcommand{\Sm}{\mathrm{Sm}}
\newcommand{\SmAff}{\mathrm{SmAff}}
\newcommand{\EssSm}{\mathrm{EssSm}}
\newcommand{\Et}{\mathrm{Et}}
\newcommand{\locsh}{\mathrm{l.sh}}
\newcommand{\Smlh}{\Sm^\locsh}
\newcommand{\A}{\mathbb A}
\newcommand{\PP}{\mathbb P}
\newcommand{\Gm}{{\mathbb G_m}}
\newcommand{\Gmup}{\mathbb G_m}
\newcommand{\Gmwo}{{\Gmup^{\wedge 1}}}
\newcommand{\pt}{\mathrm{pt}}
\newcommand{\et}{{\widehat{\text{\'et}}}}
\newcommand{\cet}{\text{\'et}}
\newcommand{\hypet}{\et}
\newcommand{\nis}{\mathrm{Nis}}
\newcommand{\Nis}{\mathrm{Nis}}
\newcommand{\Zar}{\mathrm{Zar}}
\newcommand{\fr}{\mathrm{fr}}
\newcommand{\Fr}{\mathrm{Fr}}
\newcommand{\ZF}{\bbZ\mathrm{F}}
\newcommand{\ovZF}{\overline{\bbZ\mathrm{F}}}
\newcommand{\Func}{\mathrm{Func}}
\newcommand{\bbC}{\mathbb C}
\newcommand{\bbZ}{\mathbb Z}
\newcommand{\Ab}{\mathrm{Ab}}
\newcommand{\Corr}{\mathrm{Corr}}
\newcommand{\SH}{\mathrm{SH}}
\newcommand{\DM}{\mathbf{DM}}
\newcommand{\calSP}{{\mathcal S}}
\newcommand{\calC}{{\mathcal C}}
\newcommand{\Spc}{\mathrm{Spc}}
\newcommand{\Spcpointed}{\Spc^\bullet}
\newcommand{\PShAet}{\Spc_{\Ao,\et}}
\newcommand{\Spcfr}{\Spc^\fr}
\newcommand{\SpcAet}{\PShAet}
\newcommand{\gp}{\mathrm{gp}}
\newcommand{\gpfr}{\mathrm{gpfr}}
\newcommand{\frgp}{{\fr,\gp}}
\newcommand{\Spcfrgp}{\mathrm{Spc}^{\gpfr}}
\newcommand{\PShfrgp}{\Spcfrgp}
\newcommand{\PShfrgpA}{\Spcfrgp_{\Ao}}
\newcommand{\PShfrgpAS}{\Spcfrgp_{\Ao,\Sigma}}
\newcommand{\PShfrgpS}{\Spcfrgp_{\Sigma}}
\newcommand{\PShfrgpAnis}{\Spcfrgp_{\Ao,\nis}}
\newcommand{\SpcfrgphatnAet}{\Spcfrgp_{\Ao,\et,\compln}}
\newcommand{\SpcGfrgphatnAet}{\Spc^{\bbG_m^{-1},\gpfr}_{\Ao,\et,\compln}}
\newcommand{\SptGfrhatnAet}{\Spt^{\bbG_m^{-1},\fr}_{\Ao,\et,\compln}}
\newcommand{\Sptfr}{\SptSfr}
\newcommand{\Spt}{\mathrm{Spt}}
\newcommand{\SptS}{\Spt}
\newcommand{\Sptgeq}{\Spt_\geq}
\newcommand{\bdleqgeq}{\mathrm{b}}
\newcommand{\Sptb}{\Spt_\bdleqgeq}
\newcommand{\SptSet}{\SptS_{\et}}
\newcommand{\SptShatnhypet}{\Spt_{\et,\compln}}
\newcommand{\SptSetleq}{\SptS_{\et\leq}}
\newcommand{\SptSAnis}{\SptS_{\Ao,\nis}}
\newcommand{\SptSAet}{\SptS_{\Ao,\et}}
\newcommand{\SptSetA}{\SptS_{\Ao,\et}}
\newcommand{\SptHZ}{\Spt^{\bbZ}}
\newcommand{\SptHZn}{\Spt^{\bbZ/l}} 
\newcommand{\SptSfr}{\SptS^\fr}
\newcommand{\SptSfrSigma}{\SptS^\fr_{\Sigma}}
\newcommand{\SptShatnfrSigma}{\SptS^\fr_{\Sigma,\compln}}
\newcommand{\SptShatnfrAS}{\mathrm{Spt}^\fr_{\Ao,\Sigma,\compln}}
\newcommand{\SptSfrAet}{\mathrm{Spt}^\fr_{\Ao,\et}}
\newcommand{\SptHZfr}{\Spt^{\bbZ\fr}}
\newcommand{\SptHZnfr}{\Spt^{\bbZ\fr/l}}
\newcommand{\HZ}{\mathrm{H}\bbZ}
\newcommand{\HZn}{\mathrm{H}\bbZn}
\newcommand{\SptShereetA}{\Spt^{\Sphere^{-1}}_{\Aet}}
\newcommand{\SptSphereAet}{\Spt^{\Sphere}_{\Ao,\et}}
\newcommand{\SptSpherefrAet}{\Spt^{\Sphere,\fr}_{\Ao,\et}}
\newcommand{\SptSpherefrgpAet}{\Spt^{\Sphere,\gpfr}_{\Ao,\et}}
\newcommand{\SptPhatnAhypet}{\Spt^{\PP^{-1}}_{\Ao,\et,\compln}}
\newcommand{\SpcPhatnAhypet}{\Spc^{\PP^{-1}}_{\Ao,\et,\compln}}
\newcommand{\SptSGhatnAhypet}{\Spt^{\bbG_m^{-1}}_{\Ao,\et,\compln}}
\newcommand{\SptSpherehatnAhypet}{\Spt^{\Sphere}_{\Ao,\et,\compln}}
\newcommand{\SptShatnAnis}{\Spt_{\Ao,\nis,\compln}}
\newcommand{\SptfrShatnAhypet}{\Spt^{\fr}_{\Ao,\et,\compln}}
\newcommand{\SptShatnAhypet}{\Spt_{\Ao,\et,\compln}}
\newcommand{\Aet}{\Ao,\et}
\newcommand{\Anis}{\Ao,\nis}
\newcommand{\SptPhatnfrgpAet}{\Spt^{\PP^{-1},\gpfr}_{\Aet,\compln}}
\newcommand{\SptSpherehatnfrgpAet}{\Spt^{\Sphere,\gpfr}_{\Aet,\compln}}
\newcommand{\SptPhatnfrAet}{\Spt^{\PP^{-1},\fr}_{\Aet,\compln}}
\newcommand{\SptPfrgphatnAet}{\SptPhatnfrgpAet}
\newcommand{\SptSpherefrgphatnAet}{\SptSpherehatnfrgpAet}
\newcommand{\SptPfrgpAnis}{\Spt^{\PP^{-1},\gpfr}_{\Ao,\Nis}}
\newcommand{\SpcPAet}{\Spc^{\PP^{-1}}_{\Ao,\et}}
\newcommand{\SpcPetA}{\Spc^{\PP^{-1}}_{\Ao,\et}}
\newcommand{\SptGAet}{\Spt^{\bbG_m^{-1}}_{\Ao,\et}}
\newcommand{\SptSGAet}{\Spt^{\bbG_m^{-1}}_{\Ao,\et}}
\newcommand{\SptPANis}{\Spt^{\PP^{-1}}_{\Ao,\Nis}}
\newcommand{\SptPAnis}{\Spt^{\PP^{-1}}_{\Ao,\Nis}}
\newcommand{\SpcPANis}{\Spc^{\PP^{-1}}_{\Ao,\Nis}}
\newcommand{\SptSGAnis}{\Spt^{\bbG_m^{-1}}_{\Ao,\Nis}}
\newcommand{\SptPfrAet}{\Spt^{\PP^{-1},\fr}_{\Ao,\et}}
\newcommand{\SptGfrAet}{\Spt^{\bbG_m^{-1},\fr}_{\Ao,\et}}
\newcommand{\ZPre}{\mathrm{Ab}}
\newcommand{\ZnPSh}{\ZPre^{\bbZ/l}}
\newcommand{\ZPShfr}{\ZPre^{\fr}}
\newcommand{\ZPShfrASigma}{\ZPShfr_{\Ao,\Sigma}}\newcommand{\ZnPShfrASigma}{\ZnPShfr_{\Ao,\Sigma}}
\newcommand{\ZnPShfr}{\ZPre^{\fr/l}}
\newcommand{\ZnPShASigma}{\ZPre^{\bbZ/l}_{\Ao,\Sigma}}
\newcommand{\lcet}{{\mathrm{l.c.}\et}}
\DeclareMathOperator{\cofib}{cofib}
\newcommand{\Id}{\mathrm{Id}}
\newcommand{\Ker}{\operatorname{Ker}}
\newcommand{\Coker}{\operatorname{Coker}}
\newcommand{\Map}{\mathrm{Map}}
\newcommand{\Zgpfr}{{\bbZ\gpfr}}
\newcommand{\HZgpfr}{{\bbZ\gpfr}}
\newcommand{\hZgpfr}{h^{\bbZ\gpfr}}
\newcommand{\Sigmaconst}{\Xi}
\newcommand{\Pconst}{\Sigmaconst}
\newcommand{\Betti}{\mathrm{Be}}
\newcommand{\Cor}{\mathrm{Cor}}
\newcommand{\echark}{\operatorname{echar}}
\newcommand{\CMon}{\mathrm{CMon}}
\newcommand{\bbG}{\mathbb{G}}
\newcommand{\Gal}{\mathrm{Gal}}
\newcommand{\compln}{{\hat{l}}}
\newcommand{\taugeq}[2]{{#1}{#2}}
\newcommand{\tauleq}[2]{{#1}{#2}}
\newcommand{\etgeq}{\taugeq{\et}{\geq}}
\newcommand{\etleq}{\tauleq{\et}{\leq}}
\newcommand{\Nisgeq}{\taugeq{\Nis}{\geq}}
\newcommand{\Nisleq}{\tauleq{\Nis}{\leq}}
\newcommand{\Sigmaleq}{\tauleq{\Sigma}{\leq}}
\newcommand{\Nisgeql}[1]{\taugeq{\Nis}{\geq #1}}
\newcommand{\ptauleql}[1]{\taugeq{\tau}{\leq #1}}
\newcommand{\ptauleq}{\taugeq{\tau}{\leq}}
\newcommand{\ptaugeql}[1]{\taugeq{\tau}{\geq#1}}
\newcommand{\ptaugeq}{\taugeq{\tau}{\geq}}
\newcommand{\etbdleqgeq}{\taugeq{\et}{\bdleqgeq}}
\newcommand{\ptaubdleqgeq}{\taugeq{\tau}{\bdleqgeq}}
\newcommand{\calL}{\mathcal{L}}
\newcommand{\Lrep}{\calL}
\newcommand{\calS}{\mathcal{S}}
\newcommand{\Str}{S}
\newcommand{\Fog}{F}
\newcommand{\infSigmaSfrgp}{\Sigma^{\infty,\gpfr}_{S^1}}
\newcommand{\infOmegaSfrgp}{\Omega^{\infty,\gpfr}_{S^1}}
\renewcommand{\Re}{Re}
\newcommand{\RigidityEquivalence}{rigidity equivalence } 
\newcommand{\RigidityTheorem}{rigidity theorem } 
\newcommand{\ComparisonTheorems}{comparison theorems } 
\newcommand{\SHITheorem}{strict homotopy invariance theorem} 
\newcommand{\Cancellation}{cancellation} 
\newcommand{\CancellationTheorem}{cancellation theorem}  
\newcommand{\ReconstructionEquivalence}{reconstruction equivalence }
\newcommand{\leftadjright}{\rightleftarrows}
\newcommand{\rightadjleft}{\leftrightarrows}
\newcommand{\essemb}{\mathrm{ess}}
\newcommand{\eff}{\mathbf{eff}} 
\newcommand{\veff}{\mathbf{veff}} 
\newcommand{\SptPAniseff}{\Spt^{\PP^{-1}}_{\eff,\sAo,\nis}} 
\newcommand{\SptSGAniseff}{\Spt^{\bbG_m^{-1}}_{\eff,\sAo,\nis}}
\newcommand{\SptSGhatnAniseff}{\Spt^{\bbG_m^{-1}}_{\eff,\sAo,\nis,\compln}}
\newcommand{\SptSGhatnAeteff}{\Spt^{\bbG_m^{-1}}_{\eff,\sAo,\et,\compln}}
\newcommand{\SptPAnisveff}{\Spt^{\PP^{-1}}_{\veff,\sAo,\nis}} 
\newcommand{\SptPAetveff}{\Spt^{\PP^{-1}}_{\veff,\sAo,\et}} 
\newcommand{\SpcPhatnAnis}{\Spc^{\PP^{-1}}_{\Ao,\nis,\compln}} 
\newcommand{\SpcPAniseff}{\Spc^{\PP^{-1}}_{\eff,\sAo,\nis}} 
\newcommand{\SptPhatnAetveff}{\Spt^{\PP^{-1}}_{\veff,\sAo,\et,\compln}} 
\newcommand{\Ao}{\A^1} 
\newcommand{\sAo}{\,\A^1} 
\newcommand{\hetl}{h. \'et. l. }
\newcommand{\Sphere}{\mathbb S}
\newcommand{\motpref}{\Gm,\A^1}
\newcommand{\Pmotpref}{\PP^1,\A^1}
\newcommand{\unsmotpref}{\gp,\A^1}
\newcommand{\bbZn}{\bbZ/l}
\author{Andrei Druzhinin}
\address{Chebyshev Laboratory, St. Petersburg State University  \& 
St. Petersburg Department of Steklov Mathematical Institute of Russian Academy of Sciences, Russia}
\email{\href{mailto:andrei.druzh@gmail.com}{andrei.druzh@gmail.com}}
\author{Ola Sande}
\address{University of Oslo, Oslo, Norway}
\email{\href{mailto:olas1802@gmail.com}{olas1802@gmail.com}}
\begin{document}


\title[Hyper\'et. framed motives and comparison of st. hom. groups]{Hypercomplete \'etale framed motives and comparison of stable homotopy groups of motivic spectra and \'etale realizations over a field.
} 



\begin{abstract}
For any base field and integer $l$ invertible in $k$, we prove that $\Omega^\infty_{\mathbb{G}_m}$ and
$\Omega^\infty_{\PP^1}$ commute with hyper \'etale sheafification $L_{\et}$ and Betti realization through infinite loop space theory in motivic homotopy theory.

The central subject of this article is an $l$-complete hypercomplete \'etale analog of the framed motives theory developed by Garkusha and Panin. Using Bachman's hypercomplete \'etale \RigidityTheorem and
the $\infty$-categorical approach of framed motivic spaces by Elmanto, Hoyois, Khan, Sosnilo, Yakerson, we prove the recognition principle and
the framed motives formula for the composite functor
    \[
    \Delta^\mathrm{op}\Sm_k\to \SptSGAet(\Sm_k)\xrightarrow{\Omega^\infty_{\Gm}} \SptShatnhypet(\Sm_k).
    \]

The first applications include the hypercomplete \'etale stable motivic connectivity theorem and an \'etale local isomorphism
\[
\pi^{\Ao,\Nis}_{i,j}(E)\simeq\pi^{\Ao,\et}_{i,j}(E)
\]
for any $l$-complete effective motivic spectra $E$, and $j\geq 0$.
Furthermore, we obtain a new proof for Levine's comparison isomorphism over $\mathbb C$, $\pi_{i,0}^{\Ao,\nis}(E)(\bbC)\cong \pi_i(\Betti(E))$,
and Zargar's generalization for algebraically closed fields, that applies to an arbitrary base field.
\end{abstract}

\maketitle

\tableofcontents

\maketitle

\tableofcontents


\section{Introduction}

\'Etale and $l$-adic cohomology theories 
were originally introduced to play the role of an analog of
singular cohomology of topological spaces for algebraic varieties
over a field $k$, 
see \cite{MilneLectEtCoh}\cite{EberhardFreitag}\cite{deligneetale}.
Motivic homotopy theory 
explains
this algebro geometric analogy
on an advanced level,
because
motivic
constructions and results
look
even more similar 
and
fragmentary 
almost 
parallel to 
the 
topological one's. Also,
$l$-complete stable motivic invariants 
for an integer $l$ invertible in $k$
are 
more universal 
than $l$-adic ones,
since
$l$-complete 
locally constant \'etale hypersheaves of spectra are motivically local,
and moreover, 
there is an 
embedding
\begin{equation}\label{eq:embeddinlcetShvtoSmot}
\SptS_{\lcet,\hat{l}}(\Sm_k)\simeq
\SptShatnhypet(\Et_k)\xrightarrow{\Pconst^{\et,\compln}_{\Anis}} 
\SptSGAnis(\Sm_k)=\Func_{\Anis}(\Sm_k^\mathrm{op},\Spt)[\bbG_m^{\wedge -1}],
\end{equation}
see Notation 
\ref{notation:Pconst} and \eqref{eq:SptPANis}, \eqref{eq:intro:Rigidity:SptPAetSmkSptSAetSmkSptSetEtk}
below for details.
A fruitful study is the comparison 
between these three
areas, classical homotopy theory, motivic homotopy theory and $l$-adic theory, which has enabled 
 computation of motivic 
invariants
by reduction to the topological or \'etale local ones.
The basic input for 
such a comparison result is 
an ``\'etale realization functor''
which is 
left adjoint to 
the composite in
\eqref{eq:embeddinlcetShvtoSmot}, it
preserves the symmetric monoidal structure,
and 
takes motivic suspension to the simplicial suspension.
In this paper, we delve deeper into the study of \'etale realization functors and harness several comparison results for arbitrary fields. Our main technical machinery is hyperétale framed motives theory, see \Cref{sect:FrMotConnect}, \Cref{cor:etalefraedmotivesformula:Fr}.


For $k=\bbC$, the \'etale realization is the Betti realization,
induced by
inverse images
along the functor
$\Sm_{\bbC}\to\Spc; X\mapsto X(\bbC)$,
because for any separably closed $k$,
$\Spc(\Et_k)\simeq\Spc.$
%
Betti realization functors play a crucial role in understanding various motivic homotopy categories. For instance, the works of Levine \cite{Levine_2013} and Heller-Ormsby \cite{zbMATH06629254} demonstrate this by computing stable motivic homotopy groups of the complex and real numbers, respectively, via 
comparison results for Betti realizations. 
These functors also feature prominently in the closely related area of logarithmic motivic homotopy theory \cite{zbMATH07533305}. 

The mentioned above
Levine's comparison theorem \cite{Levine_2013} 
for stable motivic homotopy groups
uses
the comparison theorem 
for singular homologies of algebraic varieties 
by Suslin and Voevodsky
\cite{SusVoev}
written
much earlier
and
prior to
the constructions of motivic categories 
$\DM(k)$, $\mathbf{H}(k)$, $\mathbf{SH}(k)$
\cite{Voe-motives,Morel-Voevodsky,Jardine-spt,morel-trieste,mot-functors,Cisinski-Deglise-Triangmixedmotives}.
Implicitly in \cite{SusVoev} was 
a part of future construction of the \'etale Voevodsky motives category without $\Gm$-stabilization.
The 
constructions of the categories $\DM^-(k)$ and $\DM_{\cet}^-(k,\bbZ/l)$
were done in
\cite{Voe-motives,MVW}
around the millennium
with 
the comparison result for the \'etale realization functor is defined as the composite
\begin{equation}\label{eq:DMReetRidg}\DM^-(k)\to\DM^-_{\cet}(k,\bbZ/l)\simeq \mathbf{D}^-(\Gal(k),\bbZ/l),\end{equation}
where the right side equivalence is called Suslin-Veovodsky rigidity theorem.
In 2014, Ayoub 
\cite{etalerealizationAyoub}
obtained 
a rigidity theorem and 
the \'etale realization functor like \eqref{eq:DMReetRidg}
for $\mathbf{DA}^-(S,\bbZ/l)$ over a scheme $S$.
%
The result for $\DM_{\cet}(S,\bbZ/l)$ over base schemes $S$
is proven 
by Cisinski and D\'eglise in \cite{zbMATH06578150} published in 2016.

In 2004,
Isaksen’s construction \cite{zbMATH02081955} 
based on shape theory 
provided 
``\'etale realization functors''
landing in pro-spaces 
for the unstable Morel-Voevodsky
motivic homotopy category 
$\mathbf{H}(S)$ over a scheme $S$.
This construction
along with
its Hoyois's $\infty$-categorical incarnation \cite{HoyoisIsaksenConstruction} 
and 
the
motivic stabilization \cite[\S 4]{zbMATH07103864},
was used by Zargar \cite{zbMATH07103864} in 2019
to generalize Levine's results \cite{Levine_2013}
to an arbitrary algebraically closed field $k$.

Rigidity theorems by Bachmann \cite{BachmannRigidity,bachmann2021remarks} 
generalizing 
\cite{Voe-motives,MVW,Cisinski-Deglise-Triangmixedmotives,etalerealizationAyoub}
provide
hyper\'etale realization functors 
$\Re^{\PP^1,\A^1,\Nis}_{\et,\hat{l}}$
and
$\Re^{S^1,\A^1,\Nis}_{\et,\hat{l}}$
for
stable motivic homotopy categories
$\mathbf{SH}_{\Anis}(S)$ and $\mathbf{SH}^{S^1}_{\Anis}(S)$ over a scheme $S$, see \eqref{eq:intro:def_RNishypetcompln}.
%
\Cref{th:piAniswnBetenhypetANDLoopspectraandspacesANDRealisationfunctors}(a)
proves
the comparison theorem for stable motivic homotopy groups of smooth schemes
over
any field $k$
and
$l\in\bbZ$ invertible in $k$.
For
separably closed fields $k$,
this 
implies the result 
with respect to 
\'etale realization functors
by
Isaksen mentioned above,
i.e. the isomorphism with stable homotopy groups of $l$-complete \'etale homotopy type,
generalizing Zargar's result \cite{zbMATH07103864}.
The 
proof
approach
is different 
from
the ones 
by 
Levine 
\cite{Levine_2013}
and 
Zargar  
\cite{zbMATH07103864},
and
the proof 
is formally
independent from these results, and moreover, the results
for
the category 
$\DM(k)$ 
in
\cite{SusVoev,MVW}.
See \Cref{sect:ComparisonoftheapproachesforComparisonTheorems} for comparison of the strategies,
and see also \Cref{rem:basechangefieldextensionArgumentcompariosn} 
for suggestion of
an alternative 
way of the proof, which would use \cite{zbMATH07103864}.

In \Cref{th:Remonoidadjoint},
we extend the rigidity theorem and construction of \'etale realization,
see \eqref{eq:intro:def_RNishypetcompln_CMongp},
to 
the motivic homotopy category of
grouplike
Segal's $\Gamma$-spaces
over a field $k$
proving
a hypercomplete \'etale counterpart to the stable connectivity theorem from \cite{connectivityMorel},
see
\Cref{th:connectivity}(a) of \Cref{cor:etalefraedmotivesformula:Fr}%
.
This allows to define the arrow

\begin{equation}\label{eq:ReOmegatoOmegaRe}
\Re^{\gp,\A^1,\Nis}_{\et,\hat{l}}\Omega^\infty_{\PP^1}
\to
\Omega_{S^1}^\infty\Re^{\PP^1,\A^1,\Nis}_{\et,\hat{l}}.
\end{equation}
\Cref{th:piAniswnBetenhypetANDLoopspectraandspacesANDRealisationfunctors}(b) 
proves 
that \eqref{eq:ReOmegatoOmegaRe} is an equivalence.

By the way
we prove a comparison result
for
hyper\'etale and 
Nisnevich 
local
$l$-complete stable motivic homotopy categories,
see \Cref{cor:nisconget},
using 
framed motives theory
provided for these categories
by
\cite{garkusha2018framed} and
\Cref{cor:etalefraedmotivesformula:Fr},
respectively,
see
\Cref{sect:FrMotConnect}.

\subsection{Construction of $l$-complete hypercomplete \'etale realization functors}\label{sect:intro:Renisetcompln}
The hypercomplete \'etale stable motivic homotopy category over a base field $k$ fits into the middle of the diagram
\begin{equation}\label{eq:SptPANis}
\xymatrixcolsep{5pc}\xymatrix{
\SptSGAnis(\Sm_k)\ar@<0.5ex>[r]^{L^{\motpref,\nis}_{\et}}&\SptSGAet(\Sm_k)\ar@<0.5ex>[l]^{E_{\motpref,\nis}^{\et}}\ar@<-0.5ex>[r]_{\Gamma^{\motpref,\et}_{\et}}&\Spt_\et(\Et_k)\ar@<-0.5ex>[l]_{\Xi^{\et}_{\motpref,\et}}
}
\end{equation}
between
the $\infty$-category of hypercomplete \'etale spectra on the small etale site $\Spt_\et(\Et_k)$
and the
stable motivic homotopy category
with respect to the Nisnevich topology,
whose
$\infty$-categorical incarnations \cite{robalo2013noncommutative}, 
\cite[\S 4.1]{bachmann2020norms} 
over a field $k$
we denote by
\[\begin{array}{lcll}
\Spc_{\Anis}(k)&=&\Func_{\Anis}(\Sm_k^\mathrm{op},\Spc)&,\\
\Spt_{\Anis}(k)&=&\Func_{\Anis}(\Sm_k^\mathrm{op},\Spt)&,\\
\SpcPANis(\Sm_k)&=&\Func_{\Anis}(\Sm_k^\mathrm{op},\Spc)[\PP^{\wedge -1}]&,\\
\SptSGAnis(\Sm_k)&=&\Func_{\Anis}(\Sm_k^\mathrm{op},\Spt)[\bbG_m^{\wedge -1}]&,\\
\SptPANis(\Sm_k)&=&\Func_{\Anis}(\Sm_k^\mathrm{op},\Spt)[\PP^{\wedge -1}]
\end{array}\]
for motivic spaces,
$S^1$-spectra,
$\PP^{\wedge 1}$-spectra,
$(S^1,\bbG_m^{\wedge 1})$-bispectra,
$(S^1,\PP^{\wedge 1})$-bispectra,
respectively.

The adjunction to the right \eqref{eq:SptPANis}, see Notation \ref{notation:Pconst} and \ref{notation:functorsGammas},
is an equivalence up to $l$-completion
for $l\in\bbZ$ invertible in $k$
by the \RigidityTheorem \cite{BachmannRigidity,bachmann2021remarks} by Bachmann.
Moreover, by \cite[Theorem 3.1]{bachmann2021remarks}
there are equivalences
\begin{equation}\label{eq:intro:Rigidity:SptPAetSmkSptSAetSmkSptSetEtk} 
    \SptSGhatnAhypet(\Sm_k)
    \stackrel{\simeq}{\leftadjright}
    \SptShatnAhypet(\Sm_k)
    \stackrel{\simeq}{\rightadjleft}
    \SptShatnhypet(\Et_k),
    \end{equation}
where the subscript $\compln$ denotes the $l$-completion.
This leads to the construction of \'etale realization functor as the composite
\begin{equation}\label{eq:intro:def_RNishypetcompln}\Re^{\motpref,\Nis}_{\hypet,\compln}\colon\SptSGAnis(\Sm_k)\xrightarrow{L^{\motpref,\nis}_{\et,\compln}} \SptSGhatnAhypet(\Sm_k)\xrightarrow{(\Pconst^{\et,\compln}_{\motpref,\et,\compln})^{-1}} \SptShatnhypet(\Et_k),\end{equation} 
see \cite[Theorem 7.1]{BachmannRigidity} or \cite[Def. 2.4.6]{haine2023spectral},
and similarly for $\SptShatnAnis(\Sm_k)$.

All the functors in \eqref{eq:intro:Rigidity:SptPAetSmkSptSAetSmkSptSetEtk}
preserve $t$-structures by
hypercomplete \'etale stable connectivity theorem,
\Cref{th:connectivity}(a) of \Cref{cor:etalefraedmotivesformula:Fr},
discussed in \Cref{sect:FrMotConnect}.
This allows us
to define
the realization functor
for
grouplike $\infty$-commutative monoids
\begin{equation}\label{eq:intro:def_RNishypetcompln_CMongp}
    \Re^{\unsmotpref,\Nis}_{\hypet,\compln} \colon
    \CMon^{\gp}_{\Ao,\nis}(\Sm_k)\xrightarrow{L^{\unsmotpref,\nis}_{\et,\compln}}
    \CMon^{\gp}_{\Ao,\et,\compln}(\Sm_k)\xrightarrow{(\Pconst^{\et,\compln}_{\unsmotpref,\et,\compln})^{-1}}
    \CMon^\gp_{\et,\compln}(\Et_k),
\end{equation}
see \Cref{def:Renisetcompln}
or \eqref{eq:intro:Rigidity:connective:SptPAetSmkSptSAetSmkSptSetEtk} below.
The construction provides the following result.

\begin{theorem*}[\protect{\cite[Theorem 7.1]{BachmannRigidity}} and \Cref{th:Remonoidadjoint}]
There are
symmetric monoidal functors
$\Re^{\motpref,\Nis}_{\et,\compln}$
and
$\Re^{\unsmotpref,\Nis}_{\et,\compln}$
of the form
\eqref{eq:intro:def_RNishypetcompln}
and
\eqref{eq:intro:def_RNishypetcompln_CMongp}
and
adjunctions of functors
\begin{equation}\label{eq:ReNishypetcomplndashvPconstcompln}
    \Re^{*,\Nis}_{\hypet,\compln}\dashv \Pconst^{\et,\compln}_{*,\nis},
\end{equation}
where
$*=(\motpref)$ or $*=(\unsmotpref)$.
\end{theorem*}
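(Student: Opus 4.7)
For $*=(\motpref)$ the statement reduces to \cite[Theorem 7.1]{BachmannRigidity}: the composite \eqref{eq:intro:def_RNishypetcompln} expresses $\Re^{\motpref,\Nis}_{\et,\compln}$ as the $l$-complete hyperétale sheafification $L^{\motpref,\nis}_{\et,\compln}$ followed by the symmetric monoidal equivalence $(\Pconst^{\et,\compln}_{\motpref,\et,\compln})^{-1}$ supplied by \eqref{eq:intro:Rigidity:SptPAetSmkSptSAetSmkSptSetEtk}. The right adjoint is then $\Pconst^{\et,\compln}_{\motpref,\nis}=E^\et_{\motpref,\nis}\circ\Pconst^{\et,\compln}_{\motpref,\et,\compln}$, and symmetric monoidality is inherited from each constituent. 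Nothing new is required beyond recording this composite as a single named functor.

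For $*=(\unsmotpref)$ the plan is to lift the rigidity chain \eqref{eq:intro:Rigidity:SptPAetSmkSptSAetSmkSptSetEtk} from spectra to grouplike $E_\infty$-monoids by passing to connective covers and then invoking a recognition principle. The first step is to invoke the hypercomplete étale stable connectivity theorem (\Cref{th:connectivity}(a) stated as part of \Cref{cor:etalefraedmotivesformula:Fr}) to check that each of the three functors appearing in \eqref{eq:intro:Rigidity:SptPAetSmkSptSAetSmkSptSetEtk} is $t$-exact for the natural $t$-structures. It then follows that the symmetric monoidal rigidity equivalences restrict to symmetric monoidal equivalences between the connective subcategories, which the recognition principle --- in the presheaf form supplied by the framed motives theory of \Cref{sect:FrMotConnect} --- identifies with the corresponding categories of grouplike commutative monoids, yielding in particular
$$\CMon^\gp_{\Ao,\et,\compln}(\Sm_k)\simeq \CMon^\gp_{\et,\compln}(\Et_k).$$
This equivalence plays the role of $(\Pconst^{\et,\compln}_{\unsmotpref,\et,\compln})^{-1}$ in \eqref{eq:intro:def_RNishypetcompln_CMongp}. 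Precomposing with the symmetric monoidal localization $L^{\unsmotpref,\nis}_{\et,\compln}$, which is left adjoint to the inclusion of hypercomplete $l$-complete étale sheaves of grouplike commutative monoids into the corresponding Nisnevich category, produces both the functor $\Re^{\unsmotpref,\Nis}_{\et,\compln}$ and the adjunction \eqref{eq:ReNishypetcomplndashvPconstcompln}.

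The main obstacle is the $t$-exactness step: without the stable connectivity input, the rigidity equivalence cannot be restricted to connective parts and the recognition principle cannot be brought to bear. Once $t$-exactness is in hand, the remaining steps are essentially formal, because each functor in sight is symmetric monoidal and $\CMon^\gp(-)$ is functorial under symmetric monoidal exact functors; the symmetric monoidal structure and the adjunction therefore transfer automatically to the monoid-level composite \eqref{eq:intro:def_RNishypetcompln_CMongp}. In effect, the substantive content of the theorem is absorbed into the connectivity input provided by the hypercomplete étale framed motives machinery.
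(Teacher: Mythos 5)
Your proposal is correct and follows essentially the same route as the paper: for $*=(\motpref)$ the statement is recorded from Bachmann's rigidity plus the Nisnevich-to-hyperétale localization adjunction, and for $*=(\unsmotpref)$ the substantive input is \Cref{th:connectivity}, which shows the rigidity equivalences preserve \hetl connective objects and hence restrict to equivalences on the grouplike commutative-monoid subcategories, after which the realization and its adjoint are again obtained by composing with the Nisnevich localization. One small imprecision worth noting: the identification of the \hetl connective subcategory with $\CMon^\gp_{\Ao,\et,\compln}(\Sm_k)$ (resp.\ $\CMon^\gp_{\et,\compln}(\Et_k)$) is the standard recognition equivalence $\SptS_{\et\geq 0}\simeq\CMon^\gp_\et$ and does not itself come from the framed motives machinery; what the framed machinery supplies is only the connectivity statement. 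Your argument nonetheless arrives at the correct conclusion.
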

\begin{remark}Adjunctions
\eqref{eq:ReNishypetcomplndashvPconstcompln}
imply that
functors $\Re^{*,\Nis}_{\hypet,\compln}$
for $k=\bbC$
are equivalent to the $l$-completions of
Betti realizations.
\end{remark}

\subsection{Framed motives and connectivity}\label{sect:FrMotConnect}

To
prove
the mentioned connectivity theorem for
$\SptSGhatnAhypet(\Sm_k)$
and
$\SptShatnAhypet(\Sm_k)$,
and
at the same time,
to study the left-side adjunction
in \eqref{eq:SptPANis}
up to $l$-completion
\begin{equation}\label{eq:adj:LEGmAetniscompln}
L^{\motpref,\nis,\compln}_{\et}\dashv E^{\et}_{\motpref,\nis,\compln}\
\end{equation}
we use
computational machinery
called framed motives theory.

Although the study of $\SptSGAnis(\Sm_k)$ in diagram \eqref{eq:SptPANis}
is immensely complicated, the machinery of Garkusha-Panin framed motives \cite{garkusha2018framed}
leads to
explicit computations for fibrant
resolutions as well as for $\infty$-loop spaces and spectra
\footnote{
More precisely, the framed motives machinery expresses
the Nisnevich stalks of the stable motivic homotopy
sheaves
in terms of Voevodsky's framed correspondences \cite{VoevNotesFr}.
It 
plays the same role for $\SptSGAnis(\Sm_S)$ as Voevodsky's theory \cite{Voe-hty-inv,Voe-motives} for $\DM(k)$. In turn, for the category of \'etale motives with $\bbZn$-coefficients, such a theory is provided for perfect fields
by
\cite{SusVoev,Voe-motives,MVW},
and the inner results and arguments of \cite{Suslin_nonperfectivcahrkmotcomplexes} could be applied to extend the generality.
In this paper, we achieve the same theory for $\SptSGhatnAhypet(k)$.
}
\footnote{
\cite{garkusha2018framed} provides the result for infinite perfect fields,
finite fields are covered independently by
\cite{DrKyllfinFrpi00,elmanto2021motivic},
and arbitrary fields 
are covered by \cite{SHIfr}.
The results over nonperfect fields with $\bbZ[1/p]$-coefficients
are recovered in
\Cref{sect:PurelyinsepextensionsASigma} of this paper.}

The computational results are kept in
the precise terms of Voevodsky's framed correspondences
in parallel to \cite{garkusha2018framed}.
The
arguments
use the $\infty$-category
of tangentially framed correspondences
$\Corr^\fr(k)$
from \cite{elmanto2021motivic}.
The following theorem
collects
$l$-complete hypercomplete \'etale
analogs of
\cite[Th. 10.7, Th 11.1, Th 10.7]{garkusha2018framed},
that
is
a summary of
key results
in
constructing
of
the
framed motives theory.

\begin{theorem*}[\protect{\Cref{cor:etalefraedmotivesformula:Fr}}]
\label{th:intro:Fr}
Let $k$ be a field, and $l\in\bbZ$ be invertible in $k$.
\par\noindent(a)
Consider the composite functor
\[
\Omega^\infty_{\PP^1}\Sigma^\infty_{\PP^1}\colon
\Delta^\mathrm{op}\Sm_k\to \SpcAet(\Sm_k)\xrightarrow{\Sigma^\infty_{\PP^1}} \SpcPetA(\Sm_k)\xrightarrow{\Omega^\infty_{\PP^1}} \SpcAet(\Sm_k)\to \Spc(\Sm_k),
\]
see
Notation
\ref{notation:supersubscriptPPGm:short}.
For any simplicial smooth scheme $Y$ over $k$, and
strictly henselian local essentially smooth $U$ over $k$,
    there is an equivalence in $\Spc$
\begin{equation*}
(\Omega^\infty_{\PP^1}\Sigma^\infty_{\PP^1} Y)(U)_\compln
\xleftarrow{\simeq}
(\Fr(\Delta^\bullet_k\times U,Y)^\gp)_\compln,\end{equation*}
where
the subscript $\compln$ at the left side means the $l$-completion of a grouplike $\infty$-commutative monoid,
see \Cref{subsection:CompletionConservativity},
and the right side is the $l$-completion of the group completion of the special Segal's $\Gamma$-space \eqref{eq:KmapstoFrDeltabulletdashKtimesY}.

\par\noindent(b)
Let $\Sphere$ denote $\PP^{\wedge 1}$ or $\bbG_m^{\wedge 1}$.
Consider the composite functor
\[\Omega^\infty_{\Sphere}\Sigma^\infty_{\Sphere}\Sigma^\infty_{S^1}\colon
\Delta^\mathrm{op}\Sm_k\to \SpcAet(\Sm_k)\xrightarrow{\Sigma^\infty_{\Sphere}\Sigma^\infty_{S^1}} \SptShereetA(\Sm_k)\xrightarrow{\Omega^\infty_{\Sphere}}
\SptSetA(\Sm_k)\to \SptS(\Sm_k),\]
see Notation \ref{notation:supersubscriptPPGm:short}.
For any simplicial smooth scheme $Y$ over $k$, and
strictly henselian local essentially smooth $U$ over $k$,
there is an equivalence in $\SptS$
\begin{equation*}
(\Omega^\infty_{\Sphere}\Sigma^\infty_{\Sphere}\Sigma^\infty_{S^1} Y)(U)_\compln
\xleftarrow{\simeq}
(\Fr(\Delta^\bullet_k\times U,\Sigma^\infty_{S^1}Y)^\gp)_\compln,\end{equation*}
where the right side is given by
an
$\Omega_{S^1}$-spectrum of pointed simplicial sets
in the sense of
\cite{zbMATH01698557}.

\par\noindent(c)
For any $Y\in\Sm_k$, there are natural \'etale local isomorphisms in $\Ab(\Sm_k)$ 
\[
\pi_{i,0}^{\Ao,\et}(\Sigma^\infty_{\PP^1} Y)_l\simeq \pi_i(\Fr(\Delta^\bullet_k\times -,Y))_l,
\quad
\pi_{i,0}^{\Ao,\et}(\Sigma^\infty_{\PP^1} Y)_{/l}\simeq \pi_i(\Fr(\Delta^\bullet_k\times -,Y))_{/l},
\]
for each $i\in\bbZ_{>0}$,
\[
\pi_{0,0}^{\Ao,\et}(\Sigma^\infty_{\PP^1} Y)_l\simeq \ZF(\Delta^\bullet_k\times -,Y)_l,
\quad
\pi_{0,0}^{\Ao,\et}(\Sigma^\infty_{\PP^1} Y)_{/l}\simeq \ZF(\Delta^\bullet_k\times -,Y)_{/l},
\]
and vanishing for $i\in\bbZ_{<0}$,
see Notation \ref{notation:piAtau(F)(U)} and \ref{notation:FnFslashn}.
\end{theorem*}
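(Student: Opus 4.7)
The plan is to combine the Garkusha--Panin--EHKSY framed motives formula in the Nisnevich setting with Bachmann's hypercomplete \'etale rigidity theorem \eqref{eq:intro:Rigidity:SptPAetSmkSptSAetSmkSptSetEtk}, propagated through $l$-completion and group completion. The proof runs in parallel to \cite{garkusha2018framed} and the $\infty$-categorical infinite loop space recognition of \cite{elmanto2021motivic}, replacing the Nisnevich topology by the hypercomplete \'etale one throughout and inserting $l$-completion wherever needed.

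For (a), I first invoke the $\infty$-categorical recognition principle together with the Garkusha--Panin framed motives formula in the Nisnevich setting: for $U$ a strictly henselian local essentially smooth scheme over $k$, the motivic $\PP^1$-infinite loop space $(\Omega^\infty_{\PP^1}\Sigma^\infty_{\PP^1}Y)^{\Ao,\Nis}(U)$ is identified with the group completion of the special Segal $\Gamma$-space $\Fr(\Delta^\bullet_k\times U, Y)$. I then identify the hyper\'etale motivic infinite loop functor on strictly henselian local points, after $l$-completion, with the $l$-completion of its Nisnevich counterpart. This rests on two inputs: that the $l$-completed hyper\'etale sheafification of a Nisnevich sheaf agrees with its Nisnevich value at a strictly henselian local point (since such a point carries no nontrivial \'etale covers), and that after $l$-completion the $\bbG_m^{\wedge 1}$-suspensions are hyper\'etale locally equivalent to $\PP^{\wedge 1}$-suspensions via the rigidity theorem. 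Combining these, $(\Fr(\Delta^\bullet_k\times U, Y)^\gp)_\compln$ computes the $l$-completed hyper\'etale motivic infinite loop space at $U$.

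For (b), I run the same argument through the $S^1$-stabilization: the suspension spectrum $\Sigma^\infty_{\Sphere}\Sigma^\infty_{S^1}Y$ for $\Sphere=\bbG_m^{\wedge 1}$ or $\PP^{\wedge 1}$ admits a framed motives formula yielding an $S^1$-spectrum of pointed simplicial sets in the Bousfield--Friedlander sense, with group completion and $l$-completion carried out levelwise; the case $\Sphere=\bbG_m^{\wedge 1}$ is obtained from $\Sphere=\PP^{\wedge 1}$ by a single $S^1$-desuspension, which is compatible with both operations after rigidity. For (c), I take homotopy groups of the equivalence in (a) and use the long exact sequences associated to $l$-completion to separate $\pi_{i,0}(-)_l$ from $\pi_{i,0}(-)_{/l}$. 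For $i>0$, both sides are computed by homotopy groups of the special Segal $\Gamma$-space; in degree zero, the group completion of $\Fr(\Delta^\bullet_k\times -, Y)$ recovers $\ZF(\Delta^\bullet_k\times-, Y)$; vanishing for $i<0$ reflects connectivity of the special $\Gamma$-space. These identifications then sheafify \'etale-locally in $\Ab(\Sm_k)$.

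The hard part will be the compatibility of $l$-completion with hyper\'etale sheafification at strictly henselian local essentially smooth points, and verifying that Bachmann's rigidity theorem faithfully transports the Nisnevich framed-motives computation to the hyper\'etale setting without correction terms. Equally delicate is the interplay between the group completion $(-)^\gp$ and the $l$-completion $(-)_\compln$: one must check that these commute in the appropriate sense, using connectivity of the Segal $\Gamma$-space together with exactness of $l$-completion on connective spectra, so that the right-hand sides of (a) and (b) are well-defined and match the motivic infinite loop objects on the left.
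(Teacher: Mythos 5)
There is a genuine gap in your proposal. At its heart you rely on the claim that ``the $l$-completed hyper\'etale sheafification of a Nisnevich sheaf agrees with its Nisnevich value at a strictly henselian local point,'' and conclude from this that the $\Ao,\et$-local framed motives formula at $U$ follows from the Nisnevich one. But the relevant object on the left of \eqref{eq:intro:OmegaGmSigmaPYUcomplnsimeqFrDeltaUSigmaSYgpcompln:strictlochenseesssmschemeU} is the $\Ao,\et$-\emph{motivic} fibrant replacement of $\Sigma^\infty_{\PP^1}Y$ evaluated at $U$, not $L_\et$ applied to the $\Ao,\Nis$-motivic fibrant replacement. The identification of the two --- or rather the identity $\calL^\fr_{\Ao,\et}\simeq\calL^\fr_\et\calL^\fr_{\Ao}$ in the framed setting, \Cref{th:LA1et} and \Cref{cor:LA1et} --- is precisely where the technical content lives, and it does \emph{not} reduce to ``$U$ has no nontrivial \'etale covers.'' One has to prove that $\et$-hypersheafification of an $\Ao$-invariant framed presheaf stays $\Ao$-invariant (\Cref{prop:Let(_A)}), which the paper deduces from the \'etale strict homotopy invariance theorem for framed presheaves (\Cref{th:SHI:ZPShfrA}), itself resting on the framed \RigidityTheorem \cite{FramedRigidityLoc} and the descent spectral sequence (\Cref{prop:specseq_for_etalepisheaves:perfect:boundedncomplete}). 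None of this appears in your sketch, and without it the localization $L_{\Ao,\et}$ could a priori mix the $\Ao$- and $\et$-steps arbitrarily many times, so the Nisnevich input gives no formula at all.

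Relatedly, you invoke ``rigidity'' to swap $\bbG_m^{\wedge 1}$- and $\PP^{\wedge 1}$-suspensions, but the place Bachmann's rigidity is actually used in the paper is the $\Gm$-cancellation theorem (\Cref{th:GmCancellation:SfrAet}), which shows $\Sphere$ is already \emph{invertible} on $\SptSfr_{\Ao,\et,\compln}(\Sm_S)$; this is needed to compute $\Omega^\infty_\Sphere\Sigma^\infty_\Sphere$ without an explicit stabilization. Your argument also omits the $S^1$-cancellation step (\Cref{th:SCancellation}), which is what lets one descend from the $S^1$-spectrum formula in (b) to the space-level formula in (a). The paper in fact proves (b) first and deduces (a) and (c) from it, which is structurally opposite to your ordering; the reason is exactly that the $\Omega_{S^1}$-spectrum structure of $\Fr(\Delta^\bullet_k\times U,\Sigma^\infty_{S^1}Y)^\gp$ (from \cite[Th.\ 6.5]{garkusha2018framed}) lets one check the $S^1$-spectrum equivalence level-by-level and then restrict to degree zero. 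Your arrangement is not fatally wrong, but it hides the dependence structure of the argument.

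Finally, your step for (c) (``take homotopy groups and use the long exact sequences for $l$-completion'') is essentially what the paper does, so that part is fine. But the core of the proof is the chain \Cref{th:SHI:ZPShfrA} $\Rightarrow$ \Cref{prop:Let(_A)} $\Rightarrow$ \Cref{th:LA1et}, \Cref{cor:LA1et} $\Rightarrow$ \Cref{th:SCancellation}, \Cref{th:GmCancellation:SfrAet} $\Rightarrow$ \Cref{lm:etalefraedmotivicloopsspaces}, and your proposal collapses that chain into two sentences that treat as obvious what the paper spends its Sections \ref{sect:RigiditySHIZFrperfk} and \ref{sect:HypcEtFrMotives} proving.
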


\begin{remark}\label{rem:Fr1/p}
For $k$ of finite cohomological dimension,
the claim 
of the theorem above 
with $\mathbb Q$-coefficients follows from \cite[Th. 10.7, Th 11.1, Th 10.7]{garkusha2018framed}
because
the adjunction
\begin{equation*}
L^{\Pmotpref,\nis,\mathbb Q}_{\et}\colon
\SpcPANis(k,\mathbb Q)\stackrel{\simeq}{\leftadjright}\SpcPAet(k,\mathbb Q)
\colon E^{\et}_{\Pmotpref,\nis,\mathbb Q}
\end{equation*}
is an equivalence
since
there is an equivalence $\SpcPANis(k,\mathbb Q)^+\simeq\SpcPAet(k,\mathbb Q)$
by \cite[Theorem 13.2]{Elmanto_2022},
and
the minus part $\SpcPAet(k,\mathbb Q)^-$
vanishes by \cite[Lemma 16.2.19]{Cisinski-Deglise-Triangmixedmotives}.
So
the claim of the above theorem
holds with $\bbZ[1/p]$-coefficients,
summarising $l$-completion and $\mathbb Q$-coefficients.
\end{remark}

\begin{corollary*}[\protect{\Cref{th:connectivity}}]
    Let $k$ be a field, and $l\in\bbZ$ be invertible in $k$.
    \par (a)
    The functors
    \begin{equation*}\label{eq:intro:SigmainftySPconnectivity}
    \SptS_{\et,\compln}(\Sm_k)
    \xrightarrow{L^{\et,\compln}_{\Ao}}
    \SptShatnAhypet(\Sm_k)
    \xrightarrow{\Sigma^\infty_\Gm}
    \SptSGhatnAhypet(\Sm_k)
        \end{equation*}

    preserve
    \hetl connective objects, see Notation \ref{notation:hetl}.

    \par (b) There are equivalences of $\infty$-categories
    \begin{equation}\label{eq:intro:Rigidity:connective:SptPAetSmkSptSAetSmkSptSetEtk}
    \xymatrix{
    \Spt^{\bbG_m^{-1}}_{\Ao,\et\geq0,\compln}(\Sm_k)
    \ar[r]^\simeq
    &
    \SptS_{\Ao,\et\geq0,\compln}(\Sm_k)
    \ar[r]^\simeq\ar[d]^\simeq&
    \SptS_{\et\geq0,\compln}(\Et_k)\ar[d]^\simeq
    \\
     &
    \CMon^\gp_{\Ao,\et,\compln}(\Sm_k)
    \ar[r]^\simeq&
    \CMon^\gp_{\et,\compln}(\Et_k)
     }
    \end{equation}
    where the first row consists of the subcategories of \hetl connective objects, see Notation \ref{notation:subscriptgeq} and \ref{notation:subscriptintersect}.
\end{corollary*}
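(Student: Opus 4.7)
The plan is to deduce both parts directly from the framed motives formulas in parts (a)--(c) of \Cref{cor:etalefraedmotivesformula:Fr}, combined with the rigidity equivalences \eqref{eq:intro:Rigidity:SptPAetSmkSptSAetSmkSptSetEtk} and a reduction to generators.

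For part (a), I would first reduce to a generating class. The subcategory of \hetl connective objects in $\SptS_{\et,\compln}(\Sm_k)$ is generated under sifted colimits and extensions by the $l$-completed \'etale sheafifications of $\Sigma^\infty_{S^1}(Y_+)$ for $Y\in \Sm_k$, and both functors under consideration are left adjoints, hence preserve colimits; they therefore also preserve such generating constructions. It suffices to verify the two connectivity statements on this generating class. By \Cref{cor:etalefraedmotivesformula:Fr}(b), for a strictly henselian local essentially smooth $U$ over $k$, the stalk
\[
(\Omega^\infty_{\Sphere} \Sigma^\infty_{\Sphere} L^{\et,\compln}_{\Ao} \Sigma^\infty_{S^1} Y)(U)_\compln
\simeq
(\Fr(\Delta^\bullet_k \times U, \Sigma^\infty_{S^1} Y)^\gp)_\compln
\]
is the $l$-completion of an $\Omega_{S^1}$-spectrum of pointed simplicial sets, and is therefore connective. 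Taking $\Sphere = \bbG_m^{\wedge 1}$ yields connectivity preservation for the composition $\Sigma^\infty_\Gm \circ L^{\et,\compln}_{\Ao}$. The connectivity of $L^{\et,\compln}_{\Ao} \Sigma^\infty_{S^1} Y$ itself follows by the same token, for instance by combining part (a) of the theorem above (which identifies the $\infty$-loop space as a group-completed special Segal $\Gamma$-space) with the Segal--May recognition of connective spectra, or alternatively by invoking the $\Sphere = \PP^{\wedge 1}$ version of the stalk formula.

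For part (b), the horizontal equivalences in the top row of \eqref{eq:intro:Rigidity:connective:SptPAetSmkSptSAetSmkSptSetEtk} follow by restricting the rigidity equivalences \eqref{eq:intro:Rigidity:SptPAetSmkSptSAetSmkSptSetEtk} to \hetl connective parts: that the left adjoints restrict is part (a) just established, while the right adjoints $\Pconst^{\et,\compln}_{\motpref,\et,\compln}$ preserve stalks at strictly henselian local schemes by construction and therefore preserve connectivity as well. The right-hand vertical equivalence $\SptS_{\et\geq 0,\compln}(\Et_k) \simeq \CMon^\gp_{\et,\compln}(\Et_k)$ is the classical Segal--May recognition principle applied sectionwise on the small \'etale site of $k$, which is compatible with $l$-completion because $l$-completion commutes with group completion on grouplike $\infty$-commutative monoids. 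The remaining vertical equivalence $\SptS_{\Ao,\et\geq 0,\compln}(\Sm_k) \simeq \CMon^\gp_{\Ao,\et,\compln}(\Sm_k)$ then follows formally by commutativity of the square.

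The main obstacle I anticipate is the bookkeeping around $l$-completion and the compatibility of the various $t$-structures at play: one needs to carefully distinguish presheaf-level connectivity, \'etale hyperstalk connectivity, and the connective $t$-structure on $l$-complete spectra, and verify they agree in each relevant category. Modulo this setup, the substantive input is the framed motives formula from the preceding theorem, with everything else being formal manipulations of adjunctions, $t$-structures, and the classical infinite loop space theory.
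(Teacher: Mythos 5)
Your overall strategy is the same as the paper's: run the framed motives formula (\Cref{cor:etalefraedmotivesformula:Fr}) on generators to get stalkwise connectivity, transport $t$-structures along the rigidity equivalences, and identify the grouplike $\CMon$ categories with the connective parts. Part (a) is essentially correct, though you should note that the paper's argument is cleaner at the step where you must separate the two functors: rather than re-running the generator argument for each, the paper shows $\Sigma^\infty_\Gm L^{\et,\compln}_{\Ao}$ preserves \hetl{}~connectivity once, then recovers $L^{\et,\compln}_{\Ao}$ as $\Omega^\infty_\Gm\circ(\Sigma^\infty_\Gm L^{\et,\compln}_{\Ao})$ using that $\Sigma^\infty_\Gm$ is an equivalence after completion, and $\Sigma^\infty_\Gm$ as $(\Sigma^\infty_\Gm L^{\et,\compln}_{\Ao})\circ E^{\A^1,\et,\compln}_{\et,\compln}$ using the reflection $L\dashv E$. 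Your phrase ``follows by the same token'' is where you gloss over the use of the cancellation equivalence $\Omega^\infty_\Gm\Sigma^\infty_\Gm\simeq\mathrm{Id}$; this is a real, not cosmetic, input from \cite{bachmann2021remarks}.

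The genuine gap is in part (b). You establish the two top horizontal equivalences and the right-hand vertical one, and then assert that the middle vertical equivalence ``follows formally by commutativity of the square.'' This is circular: the square has four sides, and you have established only two of them (top-right horizontal and right vertical); the bottom horizontal $\CMon^\gp_{\Ao,\et,\compln}(\Sm_k)\simeq\CMon^\gp_{\et,\compln}(\Et_k)$ is precisely one of the things being proved here, and you cannot invoke \Cref{th:Remonoidadjoint} for it since that result's construction (\Cref{def:Renisetcompln}) itself relies on \Cref{th:connectivity}(b). The paper instead establishes the middle vertical equivalence directly: $\CMon^\gp_{\Ao,\et,\compln}(\Sm_k)$ is equivalent to the subcategory of $\SptS_{\Ao,\et,\compln}(\Sm_k)$ generated under colimits by $\Sigma^\infty_{S^1}X$, $X\in\Sm_k$, and that subcategory coincides with $\SptS_{\Ao,\et\geq0,\compln}(\Sm_k)$ because, by part (a), the adjunction $\SptS_{\et,\compln}(\Sm_k)\leftadjright\SptShatnAhypet(\Sm_k)$ preserves \hetl{} connectivity in both directions. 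The right vertical then follows similarly, and only after both verticals are in place does the bottom horizontal come for free. You need to supply this direct argument for the middle vertical rather than appealing to the square.
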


\begin{remark}\label{rem:Conn_Fr1/p}
    For any base field $k$, the functor
    \begin{equation*}
    \SptS_{\et}(\Sm_k)
    \longrightarrow
    \SpcPAet(\Sm_k)
    \end{equation*}
    preserves
    h. \'et. locally connective objects
    because
    of
    \Cref{cor:etalefraedmotivesformula:Fr}(c)
    and \Cref{rem:Fr1/p},
    and since $\Spc^{\PP^{-1}}_{\Ao,\et,\hat{p}}(\Sm_k)\simeq 0$
   \end{remark}
    We refer to
    \cite[\S 4]{Algebraiccobordismandetalecohomology}
    for
    the background 
    regarding
    the stable motivic connectivity with respect to the \'etale topology.

\subsection{Comparison results}\label{sect:intro:Comparisonresults}

Combining computational miracles of
framed motives with respect to Nisnevich topology
\cite{garkusha2018framed}
and
\Cref{cor:etalefraedmotivesformula:Fr}
discussed in the previous section
we obtain
the following result.

\begin{theorem*}[\protect{\Cref{cor:nisconget}}]
Let $k$ be a field, and $l\in\bbZ$ be invertible in $k$.
\par (a) Let $E\in\SpcPhatnAnis(\Sm_k)$ be effective,
and denote by the same symbol its image in $\SpcPhatnAhypet(\Sm_k)$.
The natural homomorphism of presheaves
\begin{equation*}\label{eq:intro:piA1nissimeqpiA1et}
\pi_{i+j,j}^{\Ao,\nis}(E)
\rightarrow
\pi_{i+j,j}^{\Ao,\et}(E),
\end{equation*}
is an \'etale local isomorphism,
for each $i\in\bbZ$, $j\in\bbZ_{\geq 0}$.
\par (b) The diagram of $\infty$-categories
\[
\xymatrix{
\SptSGhatnAniseff(\Sm_k)\ar[d]^{\Omega^\infty_\Gm}
\ar[r]^{L^{\motpref,\nis,\compln}_{\et}}
&
\SptSGhatnAeteff(\Sm_k)
\ar[d]^{\Omega^\infty_\Gm}
\\
\SptS_\nis(\Sm_k)\ar[r]^{L^{\nis}_\et}
&
\SptS_\et(\Sm_k)
}
\]
is commutative, see Notation \ref{notation:effveff} for the first row.
\end{theorem*}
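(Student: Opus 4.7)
The strategy is to derive both parts from the parallel framed motives computations: the Nisnevich framed motives formula of Garkusha--Panin \cite{garkusha2018framed,SHIfr,DrKyllfinFrpi00,elmanto2021motivic} and its hyper\'etale analog from \Cref{cor:etalefraedmotivesformula:Fr}(c). Both express the relevant motivic homotopy sheaves in terms of the presheaf $\pi_i\bigl(\Fr(\Delta^\bullet_k\times -,Y)\bigr)^{\gp}$ of framed correspondences, and they differ only by an $l$-completion that is invisible on $l$-complete objects.

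For part (a), I would first reduce to the generating case $E=\Sigma^\infty_{\PP^1}Y_+$ for $Y\in\Sm_k$: effective $\PP^1$-spectra are generated under colimits and $S^1$-shifts by such objects, and both functors $\pi_{i+j,j}^{\Ao,\nis}(-)$ and $\pi_{i+j,j}^{\Ao,\et}(-)$ commute with the relevant colimits on effective $l$-complete spectra (the framed motives computation is finitary at strictly henselian stalks). The weight shift $j\geq 0$ would be reduced to $j=0$ via the identification
\[
\pi_{i+j,j}^{\Ao,\tau}(\Sigma^\infty_{\PP^1}Y_+)\simeq \pi_{i,0}^{\Ao,\tau}\bigl(\Sigma^\infty_{\PP^1}(Y\wedge\Gm^{\wedge j})_+\bigr),\qquad \tau\in\{\nis,\et\},
\]
which remains within effective objects for $j\geq 0$. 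For the remaining base case, I would evaluate both presheaves at a strictly henselian local essentially smooth $U$: Garkusha--Panin's formula identifies $\pi_{i,0}^{\Ao,\nis}(\Sigma^\infty_{\PP^1}Y_+)(U)$ with $\pi_i\bigl(\Fr(\Delta^\bullet_k\times U,Y)\bigr)^{\gp}$, while \Cref{cor:etalefraedmotivesformula:Fr}(c) identifies the $l$-completion of $\pi_{i,0}^{\Ao,\et}(\Sigma^\infty_{\PP^1}Y_+)(U)$ with the $l$-completion of the same group. Since $E$ is already $l$-complete, the natural comparison map is an isomorphism on \'etale stalks, that is, an \'etale local isomorphism.

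Part (b) then follows formally from (a). The commutativity of the square amounts to showing that, for effective $E\in\SptSGhatnAniseff(\Sm_k)$, the natural map $L_{\et}^{\nis}\Omega^\infty_\Gm(E)\to \Omega^\infty_\Gm L_{\et}^{\motpref,\nis,\compln}(E)$ is an equivalence of \'etale $S^1$-spectra. Checking this stalkwise at any strictly henselian local essentially smooth $U$, the homotopy group sheaves of both sides are computed by the bigraded groups $\pi_{*,0}^{\Ao,\nis}(E)(U)$ and $\pi_{*,0}^{\Ao,\et}(E)(U)$ respectively, which agree by part (a) applied with $j=0$. The main obstacle will be the weight-shift step for $j\geq 0$ combined with the careful compatibility of $l$-completion with the framed motives formulas, both of which should go through using \Cref{cor:etalefraedmotivesformula:Fr} together with standard effectiveness and finitariness arguments.
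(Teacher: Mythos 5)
Your strategy is precisely the one the paper uses: the authors' entire proof of \Cref{cor:nisconget} is the one-line observation that both claims follow by comparing the Nisnevich framed-motives formula \eqref{eq:OmegaGmSigmaPYUsimeqFrDeltaUSigmaSYgp:lochenseesssmschemeU} with its hyperétale $l$-complete analogue \eqref{eq:OmegaGmSigmaPYUcomplnsimeqFrDeltaUSigmaSYgpcompln:strictlochenseesssmschemeU} on strictly henselian stalks, and the reductions you spell out (to generating $l$-completed suspension spectra, then to stalks) are the expected details the paper leaves implicit. One local slip worth fixing: the weight-shift identity you wrote, $\pi_{i+j,j}^{\Ao,\tau}(\Sigma^\infty_{\PP^1}Y_+)\simeq\pi_{i,0}^{\Ao,\tau}(\Sigma^\infty_{\PP^1}(Y\wedge\Gm^{\wedge j})_+)$, twists on the wrong side; the correct adjunction gives $\pi_{i+j,j}^{\Ao,\tau}(E)\simeq\pi_{i,0}^{\Ao,\tau}(\Omega^j_\Gm E)$, so for $j\geq0$ you actually feed $\Omega^j_\Gm E$ (no longer obviously effective) into the formula, which is where the $\Gm$-cancellation results in the respective settings enter. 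Your organization — deducing (b) from (a) via the hypercomplete Whitehead criterion on étale stalks — is a small reshuffle from the paper's simultaneous derivation, but the underlying comparison argument is the same.
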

\begin{remark}
By \Cref{rem:Fr1/p}, 
\Cref{cor:nisconget}
holds with $\bbZ[1/p]$-coefficients
as well.
\end{remark}

Using \Cref{cor:nisconget} in combination with \eqref{eq:intro:Rigidity:SptPAetSmkSptSAetSmkSptSetEtk} and \eqref{eq:intro:Rigidity:connective:SptPAetSmkSptSAetSmkSptSetEtk}
we get the comparison theorem of the realization functors \eqref{eq:intro:def_RNishypetcompln} and \eqref{eq:intro:def_RNishypetcompln_CMongp}.

\begin{theorem*}[\protect{\Cref{th:piAniswnBetenhypetANDLoopspectraandspacesANDRealisationfunctors}}]
Let $k$ be a field, and $l\in\bbZ$ be invertible in $k$.

(a)
    For any $E\in \SptSGAniseff(\Sm_k)$    there is
    a natural equivalence in $\SptShatnhypet(\Et_k)$
    \[\Gamma^{\motpref,\nis}_{\et,\compln}(E)\simeq
    \Re^{\motpref,\Nis}_{\hypet,\compln}(E),\]
    where
    $\Gamma^{\motpref,\nis}_{\et,\compln}=L^{\et}_{\et,\compln}\Gamma^{\motpref,\nis}_{\et}$,
    see Notation \ref{notation:functorsGammas}.

(b)
    The following diagram of $\infty$-categories is commutative
    \begin{equation*}\label{eq:intro:RealisLoopFunctorsCommute}\xymatrix{
    \SpcPAniseff(\Sm_k)\ar[r]^{\Re^{\Pmotpref,\nis}_{\et,\compln}}\ar[d]^{\Omega^\infty_{\PP^1}}&
    \Spt_{\et,\compln}(\Et_k)\ar[d]^{\Omega^\infty_{S^1}}
    \\
    \CMon^\gp_{\Ao,\nis}(\Sm_k)\ar[r]^{\Re^{\unsmotpref,\nis}_{\et,\compln}}\ar@{-->}[d]|{k=\bbC}&
    \CMon^\gp_{\et,\compln}(\Et_k)\ar@{-->}[d]|{k=\bbC}
    \\
    \Spc_{\Ao,\nis}(\Sm_\bbC)\ar[r]^{Be}&
    \Spc
    ,}\end{equation*}
      where
    the upper two rows are considered for any field $k$,
      and
    the bottom row is considered for $k=\bbC$,
    where $Be$ denotes the Betti realization, and the vertical functors are induced by the functor $\CMon^\gp\to\Spc$.
\end{theorem*}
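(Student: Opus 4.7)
The plan for part (a) is to unfold both sides using the definitions in \Cref{sect:intro:Renisetcompln} and match them via the rigidity equivalence \eqref{eq:intro:Rigidity:SptPAetSmkSptSAetSmkSptSetEtk} together with the comparison result \Cref{cor:nisconget}. By construction, $\Re^{\motpref,\Nis}_{\hypet,\compln}$ factors as $(\Pconst^{\et,\compln}_{\motpref,\et,\compln})^{-1}\circ L^{\motpref,\nis}_{\et,\compln}$, and the rigidity equivalence expresses the inverse of $\Pconst$ as the restriction functor to the small \'etale site followed by $l$-completion. Consequently the task reduces to establishing a canonical equivalence
\[
L^{\et}_{\et,\compln}\Gamma^{\motpref,\nis}_{\et}(E)\;\simeq\;L^{\et}_{\et,\compln}\Gamma^{\motpref,\et}_{\et} L^{\motpref,\nis}_{\et,\compln}(E).
\]
A comparison map from left to right is supplied by the unit of the adjunction $L^{\motpref,\nis}_{\et,\compln}\dashv E^{\et}_{\motpref,\nis,\compln}$. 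To check it is an equivalence I would pass to bigraded \'etale sheafified homotopy groups and apply \Cref{cor:nisconget}(a): the natural map $\pi^{\Ao,\nis}_{i+j,j}(E)\to\pi^{\Ao,\et}_{i+j,j}(E)$ is an \'etale-local isomorphism for $j\geq 0$ and effective $E$, so at $j=0$ the two sides have the same values on strictly henselian local essentially smooth schemes, which suffices because objects of the small \'etale site of $k$ have such stalks.

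For part (b), the plan is to substitute part (a) into both vertical arrows, so that the square becomes
\[
\xymatrix{
\SpcPAniseff(\Sm_k)\ar[r]^{\Gamma^{\Pmotpref,\nis}_{\et,\compln}}\ar[d]^{\Omega^\infty_{\PP^1}} &
\Spt_{\et,\compln}(\Et_k)\ar[d]^{\Omega^\infty_{S^1}}\\
\CMon^\gp_{\Ao,\nis}(\Sm_k)\ar[r]^{\Gamma^{\unsmotpref,\nis}_{\et,\compln}} &
\CMon^\gp_{\et,\compln}(\Et_k).
}
\]
This commutativity is the content of \Cref{cor:nisconget}(b), which records precisely that $L^{\nis}_{\et}\Omega^\infty_{\Gm}$ and $\Omega^\infty_{\Gm}L^{\motpref,\nis,\compln}_{\et}$ agree on effective spectra. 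The translation from $\Omega^\infty_{\Gm}$ to $\Omega^\infty_{\PP^1}$/$\Omega^\infty_{S^1}$ on the respective sides of the square is handled by the connectivity theorem: on the \'etale side, the equivalence of connective subcategories in \eqref{eq:intro:Rigidity:connective:SptPAetSmkSptSAetSmkSptSetEtk} identifies $\Omega^\infty_{\PP^1}$ of an effective \'etale spectrum with $\Omega^\infty_{S^1}$ of its $S^1$-infinite loop object; on the Nisnevich side we use the framed motives formula of \Cref{cor:etalefraedmotivesformula:Fr}(a) to present $\Omega^\infty_{\PP^1}$ explicitly. Both compositions then coincide with the $S^1$-connective cover restricted to $\Et_k$ and $l$-completed. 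The bottom row at $k=\bbC$ is a direct consequence of the adjunction \eqref{eq:ReNishypetcomplndashvPconstcompln} and the equivalence $\Spc(\Et_{\bbC})\simeq\Spc$, which together imply that $\Re^{*,\Nis}_{\hypet,\compln}$ is the $l$-completion of the Betti realization, compatibly with the forgetful functor $\CMon^\gp\to\Spc$.

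The main obstacle is to carry out the identifications functorially rather than objectwise, so that all the $\infty$-categorical equivalences supplied by rigidity, hyper\'etale stable connectivity, and framed motives are assembled into a genuine commutative square of functors. In particular one must verify that the left adjoint $\Omega^\infty_{\PP^1}$ preserves the effective subcategories used in \Cref{cor:etalefraedmotivesformula:Fr} and \Cref{cor:nisconget}, and that the $t$-structure equivalence in \eqref{eq:intro:Rigidity:connective:SptPAetSmkSptSAetSmkSptSetEtk} intertwines $\Omega^\infty_{\Gm}$ (or $\Omega^\infty_{\PP^1}$) with $\Omega^\infty_{S^1}$ after \'etale restriction. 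Once this bookkeeping is under control, the substance of the argument is just \Cref{cor:nisconget} combined with \Cref{th:connectivity} and framed motives.
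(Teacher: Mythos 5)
Your proposal follows essentially the same route as the paper: part (a) is reduced to comparing the two $\Gamma$-functors via \Cref{cor:nisconget} together with Bachmann's rigidity equivalence, and part (b) is assembled from \Cref{cor:nisconget}(b), the connectivity identification \eqref{eq:intro:Rigidity:connective:SptPAetSmkSptSAetSmkSptSetEtk}, and the adjunction \eqref{eq:ReNishypetcomplndashvPconstcompln} at $k=\bbC$. The one variation is that for (a) you invoke \Cref{cor:nisconget}(a) and propose to verify the comparison map on \'etale-sheafified bigraded homotopy groups, whereas the paper applies \Cref{cor:nisconget}(b) directly to obtain the equivalence $\Gamma^{\motpref,\nis}_{\hypet,\compln}\simeq \Gamma^{\motpref,\hypet}_{\hypet,\compln}L^{\motpref,\nis}_{\hypet}$ at the level of functors; both routes are valid since (a) and (b) of \Cref{cor:nisconget} are proved from the same framed-motives computations, but the paper's choice avoids the stalkwise argument and the attendant bookkeeping.
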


Point (a) of \Cref{th:piAniswnBetenhypetANDLoopspectraandspacesANDRealisationfunctors} above
for algebraically closed base field $k=\overline{k}$,
recovers the result by Zargar \cite{zbMATH07103864},
regarding pro-spectra \'etale realization functors $\Sm_{\overline k}\to \mathrm{Pro}(\SptS)$, $X\mapsto \Pi^\et_\infty X$,
that, in its turn, generalizes
the comparison theorem by Levine \cite{Levine_2013},
regarding
the Betti realization over the base field $\bbC$.

\begin{remark}\label{rem:basechangefieldextensionArgumentcompariosn}
    We would like to mention that \Cref{th:piAniswnBetenhypetANDLoopspectraandspacesANDRealisationfunctors}(a) is
    stronger than the 
    comparison theorem from \cite{zargar2017comparison} because it covers all base fields,
    while \cite{zargar2017comparison} covers only algebraically closed ones.
       We might suspect that there is an alternative way to
     deduce our result from \cite{zargar2017comparison} 
     using properties of base change functors
     for separable base field extensions like in \Cref{lm:commute:els:Let_AND_:LA_AND_OmegaGm}
     and purely unseparable ones provided by anyone of 
     \cite{Elmanto2018PerfectionIM} 
     or 
     \Cref{prop:pinsepextSchchk}, 
     but such a proof is not pursued here.
    However, we would like to stress that our proof is formally independent of any one of \cite{zargar2017comparison} and \cite{Elmanto2018PerfectionIM},
    which we will discuss
       more in detail
    in \Cref{sect:ComparisonoftheapproachesforComparisonTheorems} and \Cref{sect:ProofStrategy}
    respectively.
\end{remark}

\subsection{Comparison of the approaches for \ComparisonTheorems}
\label{sect:ComparisonoftheapproachesforComparisonTheorems}

In the proof of
\begin{equation}\label{eq:piAnisizeroFcomplnbbCsimeqpiiBettiFbbCcompln}
\pi_{i,0}^{\Ao,\nis}(E_\compln)(\bbC)\cong \pi_i(\Betti(E)_\compln),
\end{equation}
\cite[Theorem 3]{Levine_2013}, Levine
uses the slice tower of the motivic sphere spectrum
$\mathbbm{1}_k$
and its Betti realization, giving rise to the slice and Adams spectral sequences, respectively. Based on results concerning information on the layers or "slices" in the two filtrations, the first page of these spectral sequences consists of motivic homology and singular homology, respectively. In this way, modulo several technical details
\eqref{eq:piAnisizeroFcomplnbbCsimeqpiiBettiFbbCcompln} was reduced
to the foundational isomorphism
\begin{equation}\label{eq:HsingXZnsimeqHXCZn}
H^\mathrm{sing}_*(X,\bbZn)
\cong
H_*(X(\bbC),\bbZn), \quad
X\in\Sm_\bbC,\, l\in\bbZ,
\end{equation}
proven by Suslin and Voevodsky \cite{SusVoev}.
Here
\[H^\mathrm{sing}_*(X,\bbZn)\cong
\mathrm{Hom}_{\DM(k)}(X,\bbZn)\cong
\mathrm{Hom}_{\DM_\cet(k,\bbZn)}(X,\bbZn)
\]
are the motivic cohomologies
concerning
Voevodsky's triangulated category of motives \cite{Voe-motives,MVW}.
\cite[Part 2]{MVW}
provides the result like \eqref{eq:HsingXZnsimeqHXCZn}
regarding \'etale realization like \eqref{eq:intro:def_RNishypetcompln}
for a perfect field $k$.
Zargar's proof of the analog of \eqref{eq:piAnisizeroFcomplnbbCsimeqpiiBettiFbbCcompln}, see \cite{zargar2017comparison},
deduces the result
comparing
spectral sequences
related to 
the scheme $X\in\Sm_{\overline{k}}$
and
the pro-space
$\Pi^\et_\infty X$.

It was
pointed out
by Panin
that
there should be a way to recover
\eqref{eq:piAnisizeroFcomplnbbCsimeqpiiBettiFbbCcompln}
by a direct argument following the strategy of \cite{SusVoev} for \eqref{eq:HsingXZnsimeqHXCZn}
using
framed correspondences
and its associated
\RigidityTheorem
\cite{FramedRigidityLoc}
instead of $\Cor$-correspondences
and the \RigidityTheorem from
\cite[\S 4]{SusVoev}.
Though the present article uses framed correspondences,
it does not provide such an argument because of two reasons:
\par
(1)
Our proof
does not follow the strategy of \cite{SusVoev},
but rather
providing alternative
proofs for
both comparison isomorphisms
\eqref{eq:HsingXZnsimeqHXCZn}, \eqref{eq:piAnisizeroFcomplnbbCsimeqpiiBettiFbbCcompln}.
The strategy of \cite{SusVoev} used
the analogy of
the Dold-Thom Theorem in algebraic topology
and
the representability of
presheaves of
$\Cor(-,Y)$ on $\Sm_k$
by the symmetric powers
$\mathrm{Sym}^\infty Y$.
For this reason, Panin suggested to
introduce a complex analytic analog
of framed correspondences and
compare it with
the Betti realizations of the geometric models of framed correspondences
from
\cite{elmanto2021motivic} or
\cite{SmModelSpectrumTP}.
This is a serious and complicated direction of the study,
which is not pursued in the present article.

Applying our argument over $\bbC$
we would consider Betti realizations of suspension spectra
\[\Betti(\Sigma^\infty_{\PP^1}Y)\simeq\Sigma^\infty_{S^2}Y(\bbC)\]
only
instead of
the Betti realizations of
the framed motives
geometric models
or $\mathrm{Sym}^\infty Y(\bbC)$.
The advantage of framed correspondences
that allows this
is the combination of
the \ReconstructionEquivalence 
$\SH(k)\simeq\SH^\fr(k)$
\cite[\S 13]{garkusha2018framed} or \cite[Theorem 3.5.12]{elmanto2021motivic},
and
the \CancellationTheorem,
see
\Cref{th:GmCancellation:SfrAet}(a) and \Cref{th:SCancellation}.
Note that
the cancellation
was not used
in \cite{SusVoev},
and
the \ReconstructionEquivalence
does not have a direct analog for $\mathbf{DM}(k)$.

(2)
On the other hand,
our proof uses
the
external ingredient, namely,
the \RigidityTheorem by Bachmann \cite{BachmannRigidity,bachmann2021remarks}.

\subsection{Proof strategy, novelties, and ingredients}\label{sect:ProofStrategy}

\par
Although the \RigidityEquivalence
\begin{equation}\label{eq:Rigidity:SptPAcetSmkSptScetEtk}
    \Gamma^{\et,\compln}_{\et,\compln}\colon \SptPhatnAhypet(\Sm_k)\stackrel{\simeq}{\rightadjleft}\SptShatnhypet(\Et_k)\colon \Pconst^{\et,\compln}_{\et,\compln} 
\end{equation}
provided by
the composite equivalence
in
\cite[Theorem 3.1]{bachmann2021remarks}
is not
used in the proof of \Cref{cor:etalefraedmotivesformula:Fr},
the equivalence
\begin{equation}\label{eq:intro:SptSAet(k)wedgensimeqSptPAet(k)wedgen}
\SptShatnAhypet(\Sm_k)\simeq\SptPhatnAhypet(\Sm_k)   
\end{equation} provided by the left side
equivalence in
\cite[Theorem 3.1]{bachmann2021remarks}
is used,
being
applied to the $\Gm$-\Cancellation, see \Cref{th:GmCancellation:SfrAet}(a).
\begin{remark}[Alternative strategy]
Let us note
that up to some additional steps of the argument
it would be enough to use
the invertablility of $\bbG_m^{\wedge 1}$
in the $\Ao$-localization of the derived category of \'etale
$\bbZn$-modules $\mathbf D^-_{\cet}(\Sm_k,\bbZn)$,
i.e.
the equivalence
\[\mathbf D^-_{\cet}(\Sm_k,\bbZn)\simeq
\mathbf D^-_{\cet}(\Sm_k,\bbZn)[\bbG_m^{\wedge 1}]\]
instead of \eqref{eq:intro:SptSAet(k)wedgensimeqSptPAet(k)wedgen}.
Moreover, in this way, framed motives techniques would conversely
simplify the proof of
the \RigidityEquivalence
\eqref{eq:Rigidity:SptPAcetSmkSptScetEtk},
providing
a reduction
of
\eqref{eq:Rigidity:SptPAcetSmkSptScetEtk}
to the categories
$\mathbf{DA}_{\cet}(-,\bbZn)$
studied in \cite{etalerealizationAyoub}.
We do not pursue this strategy,
because we could not say that such an argument for

\eqref{eq:Rigidity:SptPAcetSmkSptScetEtk}
would be completely independent from
the strategies of \cite{BachmannRigidity} and \cite{etalerealizationAyoub},
while the text of our article would be longer.
\end{remark}

The reason for using the \emph{hypercomplete} étale topology is in our arguments
of the proof of the equivalence
\begin{equation}\label{eq:LfrAetLfrALfret}\calL^{\fr}_{\Ao,\et} \simeq \calL^\fr_{\et}\calL^{\fr}_{\Ao}\colon \Sptfr_{\Sigma,\compln}(\Sm_k)\to\Sptfr_{\Sigma,\compln}(\Sm_k),
\end{equation}
see \Cref{th:LA1et},
that
is deduced from
the \SHITheorem, see \Cref{th:SHI:ZPShfrA},
using the local-to-global
spectral sequence 
\cite[Proposition 2.13]{ClausenMathewHypdescetaleKth}, see \Cref{prop:specseq_for_etalepisheaves:perfect:boundedncomplete}.

For perfect base fields,
like as in \cite[Part 2]{MVW},
the mentioned
strict homotopy invariance theorem
is
provided by
\cite[Lemma 9.23]{MVW}, \cite[XV 2.2]{SGAfour}
and the \RigidityTheorem in this context
\cite{FramedRigidityLoc}.

The perfectness assumption
in Voevodsky's theory on $\DM(k)$
in the Nisnevich setting with $\mathbb Z[1/p]$-coefficients
was eliminated
by Suslin \cite{Sus-nonperftalk,Suslin_nonperfectivcahrkmotcomplexes}.
While there is an argument that covered Nisnevich local results with $\bbZ$-coefficients  \cite{SHIfr},
since in our setting $(n,\echark k)=1$,
the principles of the argument in \cite{Suslin_nonperfectivcahrkmotcomplexes}
are enough for our purposes.
The argument in \cite{Suslin_nonperfectivcahrkmotcomplexes}
implicitly proves fully faithfulness of the base change functor
\begin{equation}\label{eq:SptHZCorkKfullyfatihful}
    \SptHZ_{\Ao,\Sigma}(\Cor_k)[1/p]\hookrightarrow\SptHZ_{\Ao,\Sigma}(\Cor_{k^\mathrm{perf}})[1/p]
\end{equation}
where $k^\mathrm{perf}$ is
the purely inseparable closure
of $k$.
Note, on the other hand, there is
the equivalence
\begin{equation}\label{cor:th:pinsepclosure:gpfr:Equalivalenceinvchark}
e^*\colon \PShfrgpAnis(\Sm_k)[1/p]\stackrel{\simeq}{\rightadjleft} \PShfrgpAnis(\Sm_{k^\mathrm{perf}})[1/p]\colon e_*
\end{equation}
proven by
Elmanto and Khan, see \cite[Theorem 2.1.1]{Elmanto2018PerfectionIM} for $S=\Spec k$.
Nevertheless,
\eqref{cor:th:pinsepclosure:gpfr:Equalivalenceinvchark} is not applicable to results as
the \SHITheorem,
or \eqref{eq:LfrAetLfrALfret}, or \eqref{eq:intro:OmegaGmSigmaPYUcomplnsimeqFrDeltaUSigmaSYgpcompln:strictlochenseesssmschemeU},
because
it implies only a motivic equivalence
and not a
schemewise one.

In \Cref{sect:PurelyinsepextensionsASigma}, \Cref{prop:pinsepextSchchk},
we prove the equivalence
\[\PShfrgpAS(\Sch_k)[1/p]\stackrel{\simeq}{\leftadjright} \PShfrgpAS(\Sch_{k^\mathrm{perf}})[1/p],\]
that induces the coreflection
\[\Spc^\gpfr_{\Ao,\Sigma}(\Sm_k)[1/p]\leftadjright\Spc^\gpfr_{\Ao,\Sigma}(\Sm_{k^\mathrm{perf}})[1/p].\]
This allows to reduce \eqref{eq:LfrAetLfrALfret} with $\bbZ[1/p]$-coefficients for any localization $\calL_\tau$ that
commutes with the presheaf direct image functors along $\Sm_k\to\Sm_K$, and $\Sm_k\to\Corr^\fr_k$,
satisfied for $\calL_\et$ by \Cref{lm:commute:els:Let_AND_:LA_AND_OmegaGm}(1).

The novelty of our proof strategy
for the
results on $\infty$-loop spaces and spectra, see
\Cref{lm:etalefraedmotivicloopsspaces},
is
concentrated in \Cref{sect:SCancelation},
and
relates
to
the equivalence
\[
\Omega_{S^1,\Aet}^{\infty,\fr}\Sigma^{\infty,\fr}_{S^1,\Aet}
\simeq \calL_{\et}\calL_{\Ao},\colon
\Spc^\frgp_{\Sigma,\compln}(\Sm_k)\to\Spc^\frgp_{\Sigma,\compln}(\Sm_k),\]
where $\Spc^\frgp_{\Sigma}(\Sm_k)\simeq\mathrm{Func}^\times(\Corr^\fr,\Spc)^\gp$,
and see
Notation \ref{notation:supersubscriptPPGm:short}
for $\Omega_{S^1,\Aet}^{\infty,\fr}\Sigma^{\infty,\fr}_{S^1,\Aet}$.

There are also novelties regarding
the arguments
for the commutativity like $\calL_\et\gamma_*\simeq\gamma_*\calL^\fr_\et$.
In the context of the \'etale and Nisnevich localizations in \cite{VoevNotesFr,hty-inv,elmanto2021motivic}, the reasoning
is based on \v{C}ech constructions,
while the hypercompletion required
us to replace it by
an argument that uses points of the topology only, see \Cref{lm:Letgammads}(1).
A similar argument was
written earlier
for Nisnevich setting in the context of $\mathrm{GW}$-correspondences \cite{GWDMeff}.

Lastly, we mention that much thought has been spent establishing notation, to which we dedicate an entire Section \ref{notation}. Hopefully, this will make the mathematical content of this article clearer, more condensed, and more elegant. Novelties in notation include that for presheaf $\infty$-categories $\calSP^*_\star(\mathcal{C})$ we are consistent in letting superscripts indicating structure and subscripts indicating conditions, see \ref{notation:Psstardash_lists}. Nevertheless, we want to reassure the reader that almost all notations are already standard in the literature.

\subsection{Acknowledgement}



The authors are thankful to all people they have consulted
during the long time collaboration for more than three and a half years, 
and especially we are thankful for the following consultations.
\begin{itemize}
\item 
The second author would like to express his gratitude to Bachmann for patient supervision on topics related to this article, and to Østvær for concise and insightful mathematical guidance and other extremely appreciated help. 
\item
The first author is very thankful to Panin for 
discussions on ways of recovering of Levine's comparison theorem using framed correspondences and 
his invitation to collaboration on the study of their Betti realizations in the early beginning.
\end{itemize}
The research is supported by Russian Science Foundation grant 19-71-30002
provided its special support for 
\Cref{sect:Framed,sect:PurelyinsepextensionsASigma,sect:LAet-SptSfrAet,sect:OmegaPAetRNisetcompln}, 
and 
Theorems \ref{cor:nisconget}(a) and \ref{th:piAniswnBetenhypetANDLoopspectraandspacesANDRealisationfunctors}(a)
according to the grant conditions.
The second author would also like to acknowledge the support of EMOHO-project $112403001$ for his trips during the research.

\subsection{Notation and convention}
\label{notation}


We apologize for the use of nonstandard notation for the $\infty$-categories
often considered in the motivic homotopy theory.
To illustrate our notation 
and help reader 
we write two following examples 
relating our notation to the ones in 
the cited sources.
The motivic localization and 
$\infty$-suspension and $\infty$-loop functors
denoted in \cite{Elmanto_2022}
\[
\mathrm{PSh}(\Sm_k)
\xrightarrow{L_{mot}} 
\mathbf{H}(k)
\xrightarrow{\Sigma^\infty_{\PP^1}} 
\mathbf{SH}(k)\xrightarrow{\Omega^\infty_{\PP^1}} 
\mathbf{H}(k)\to
\mathrm{PSh}(\Sm_k)
\]
are denoted in our article
\[
\Spc(\Sm_k)\xrightarrow{L_{\Ao,\nis}} 
\Spc_{\Ao,\nis}(\Sm_k)\xrightarrow{\Sigma^\infty_{\PP^1}} 
\SpcPANis(\Sm_k)\xrightarrow{\Omega^\infty_{\PP^1}} 
\Spc_{\Ao,\nis}(\Sm_k)\xrightarrow{E_{\Ao,\nis}} 
\Spc(\Sm_k).
\]
The equivalences 
\begin{equation*}
    \mathcal{SH}_{\acute{e}t}(k)^\wedge_l
    \simeq
    \mathcal{SH}^{S^1}_{\acute{e}t}(k)^\wedge_l
    \simeq
    \mathrm{Sp}(k_{\acute{e}t}^\wedge)^\wedge_l
\end{equation*} 
from
\cite[Theorem 3.1]{bachmann2021remarks}
are denoted in our article
\begin{equation*}
    \SptPhatnAhypet(\Sm_k)
    \simeq
    \SptShatnAhypet(\Sm_k)
    \simeq
    \SptShatnhypet(\Et_k)
.\end{equation*} 
We use these unusual notation 
because of a general pattern, see Notation \ref{notation:Psstardash_lists}, 
that is suitable for 
other 
$\infty$-categories
considered in \Cref{sect:HypcEtFrMotives}
of our text. 
\begin{enumerate}

\item 
Throughout the text we suppose that $S$ is a qcqs scheme.

\item \label{notation:kcahark=p}
$k$ denotes a field,
and
$\echark k$ is the exponential characteristic of $k$.

\item 
$l$ denotes an integer 
such that $(n,\echark k)=1$ if otherwise is not mentioned.

\item
$F_\compln=F^\wedge_l$ is the $l$-completion of $F$;
$F^*_{\hat{l}}=(F^*)^\wedge_l$, for a given $F^*$ with superscript $*$. Example: $F^\gp_\compln=(F^\gp)^\wedge_l$.

\item \label{notation:calPcompln}
$L_{\compln}\colon\calSP\to\calSP^\wedge_l=\calSP_\compln$
is the left adjoint to the embedding of the $l$-completion of a prestable presentable symmetric monoidal $\infty$-category $\calSP$
that we denote $\calSP^\wedge_l$ or $\calSP_\compln$.

\item \label{notation:Fn=cofibFnF}
$F/l=\cofib(F\to F)$ for a 
given morphism $n\colon F\to F$ in a cocomplete $\infty$-category.

\item \label{notation:FnFslashn}
$F_l=\Ker(F\xrightarrow{n}F)$, 
$F_{/l}=\Coker(F\xrightarrow{n}F)$, 
for $n\colon F\to F$ in an abelian category.

\item 
$F^D=(-)=F(D\times-)\in\calSP(\calC)$
for $D\in\calC$, $F\in\calSP(\calC)$. 
Example: 
$F^{\Ao}(-)=F(\Ao\times-)$.

\item\label{notation:SpcSptSHtop}
$\Spc$, 
$\Spcpointed$, 
and 
$\Spt$ 
are the $\infty$-categories of spaces,
pointed spaces, 
and spectra.\\
$\SH^\mathrm{top}$ is the homotopy category of $\Spt$.

\item\label{notation:CMonCMongp}
$\CMon$ is the $\infty$-category of $\infty$-commutative monoids, i.e. $\mathbb{E}_\infty$-monoids.
$\CMon^\gp$ is the subcategory of grouplike objects.

\item\label{notation:SptZ}
${\SptHZ}$ is the $\infty$-category of modules over the Eilenberg–MacLane $S^1$-spectrum $\HZ$.\\
${\SptHZn}$ is the $\infty$-category of modules over the Eilenberg–MacLane $S^1$-spectrum $\HZn$.

\item
$\Ab=\mathrm{Mod}_{\bbZ}$ is the category of abelian groups.
$\ZnPSh=\mathrm{Mod}_{\bbZn}$.

\item \label{notation:DAb}
$\mathbf D(\mathrm{Ab})$ is the derived category of $\Ab$, i.e. 
the homotopy category of the $\infty$-category $\SptHZ$.

\item
$\Delta^\mathrm{op}\mathcal C=\Func(\Delta^\mathrm{op},\mathcal C)$.

\item
$\Sch_S$, $\Sm_S$, $\Et_S$, $\SmAff_S$ 
are the categories of $S$-schemes and smooth, \'etale,
affine smooth $S$-schemes, respectively. 
\item
$\mathrm{EssSm}_S$
denotes the category of essentially smooth $S$-schemes,
which are
filtered limits
of smooth ones with affine \'etale transition morphisms.

\item\label{notation:calP(calC)} 
$\calSP(\mathcal C)=\Func(\mathcal C^\mathrm{op},\calSP)$
for 
a
given $\infty$-category $\calSP$, and $\infty$-category $\mathcal C$.

\item
$\calSP_{\Sigma}(\mathcal C)$ is the subcategory spanned by the radditive presehaves.



\item
$\calSP(S)=\calSP(\Sm_S)$,
$F\simeq_{\Sm_S} G$ means $F\simeq G\in \calSP(\Sm_S)$.

\item\label{notation:Psstardash_lists}
$\calSP^*_\star(-)$ 
in what follows are 
the $\infty$-categories
with superindices indicating structure and subindices indicating conditions:\\
$*=
S^1,{\PP^1},\Gm,\fr,\gp,\gpfr,\bbZ,\bbZ/l,\bbZ\fr,\bbZ\fr/l$,
here
$\bbZ\fr/l$ 
equals
the combination $\bbZ/l,\fr$,\\
$\star=
\compln,\Ao,\Sigma,\Nis,\cet,\et,
\Nisgeq,\etgeq,
\Nisleq,\etleq,
\etbdleqgeq,$
and their combinations.


\item\label{notation:L}
For any $*$ from the list above and $\star=\compln,\Ao,\Sigma,\Nis,\cet,\et$, we have functors\\
$L^{*,\star_0}_{\star_1}\colon \calSP^*_{\star_0}(-)\leftadjright\calSP^*_{\star_1}(-)\colon E_{\star_0}^{*,\star_1}, 
\quad
L^{*,\star_0}_{\star_1}\dashv E_{\star_1}^{*,\star_0}$,
\\
$\Str^{\star,*_0}_{*_1}\colon \calSP^{*_0}_\star(-)\leftadjright\calSP^{*_1}_\star(-)\colon \Fog^{\star,*_1}_{*_0}, \quad\Str^{\star,*_0}_{*_1}\dashv \Fog_{\star,*_0}^{*_1}$,\\
$\calL^{*,\star_0}_{\star_1}= E_{\star_0}^{*,\star_1} L^{*,\star_0}_{\star_1}\colon\calSP^*_{\star_0}(-)\to\calSP^*_{\star_0}(-)$,
\\
$\calS^{\star,*_0}_{*_1}= \Fog_{\star,*_0}^{*_1} \Str^{\star,*_0}_{*_1}\colon\calSP^{*_0}_\star(-)\to\calSP^{*_0}_\star(-)$.
\par
\noindent
To help the reader remember we have chosen the letters with some intention:
$L$-localization, $E$-embedding, $S$-structure and $F$-forget.
\par
We let us write in short 
$L^{*,\star_0}_{\star_+}$ for $L^{*,\star_0}_{\star_0\cup\star_+}$,
or even write
$L_{\star_+}$
, when $*$ and $\star_0$ are defined by the context.


\item\label{notation:superscriptsubscript}
$\calSP^*_{\star}(-)=
\calSP^*(-)\times_{\calSP(-)}\calSP_\star(-)\simeq
\Fog_*^{-1}(\calSP_\star(-))$. 
Examples: $\Spt^{\PP^{-1}}_\star(-)$, $\Spt^\fr_\star(-)$, $\SptHZ_\star(-)$.


\item\label{notation:subscriptintersect}
$\calSP^*_{\star_1,\star_2}(-)=\calSP^*_{\star_1}(-)\cap\calSP^*_{\star_2}(-)$.
Examples: \\
$\calSP_{\Ao,\Nis}(-)=\calSP_{\Ao}(-)\cap\calSP_{\Nis}(-)$, \\
$\calSP^{*}_{\star,\compln}(-)\simeq\calSP^{*}_\star(-)^\wedge_l$, 
that is $(\calSP^\wedge_l)^{*}_\star(-)$ for prestable $\calSP$,
see
Notation \ref{notation:calPcompln},\\
$\Spt^{\PP^{-1}}_{\Ao,\Nisgeql{0}}(\Sm_S)=\Spt^{\PP^{-1}}_{\Ao}(\Sm_S)\cap\Spt^{\PP^{-1}}_{\Nisgeql{0}}(\Sm_S)$,
see Notation \ref{notation:subscriptgeq}.
\item\label{notation:calPsonestwostar}
$\calSP^{*_1,*_2}_\star(-)=(\calSP^{*_1})^{*_2}_\star(-)$, for $*_1=\bbZ,\bbZ/l,\geq$.
Example:\\ 
$\Spt^{\PP^{-1},\bbZ\fr/l}_{\Ao,\Nis}(\Sm_S)=(\Spt^{\bbZ/l})^{\PP^{-1}}_{\Ao,\Nis}(\Corr^\fr_S),$
which is equivalent to the subcategory of $\Ao$-invariant Nisnevich local objects in
$\Func\left((\Corr^\fr_S)^\mathrm{op},\SptHZn)\right)\left[\PP^{\wedge -1}_S\right]$,
see Notation
\ref{notation:S1Pwedge1Gmwedge1}.

\item\label{notation:gammaSmSCorrfrS} 
$\gamma\colon \Sch_S\to \Corr^\fr(\Sch_S)$, see \cite[\S 4]{elmanto2021motivic} for  
$\Corr^\fr(\Sch_S)$,
then \\$\gamma^*=\Str_\fr$, $\gamma_*=\Fog_\fr$,
see Notation \ref{notation:L}.
\item\label{notation:CorrfrS}
$\Corr^\fr_S=\Corr^\fr(S)=\Corr^{\fr}(\Sm_S)$ is the subcategory spanned by smooth $S$-schemes.
\item \label{notation:Corrgpfr}
$\Corr^\gpfr(-)$
is the additivisation of the semiadditive category $\Corr^\fr(-)$,\\
$\gamma_\gp\colon\Sch_S\to\Corr^\gpfr(\Sch_S)$,\\
$\gamma^*_\gp=\Str_{\gpfr}$, $\gamma^\gp_*=\Fog_{\gpfr}$,
see Notation \ref{notation:L}.
\item 
$h^\fr(-)=\Corr^\fr(-,Y)$,
$h^\gpfr(-)=\Corr^\gpfr(-,Y)$.


\item\label{notation:calPfr(calC)}
$\mathcal P^\fr(S)=\mathcal P(\Corr^\fr_S)$,
$\mathcal P^\gpfr(S)=\mathcal P(\Corr^\gpfr_S)$,\\
$S^\fr_\gpfr\colon\mathcal P^\fr(\Sm_S)\to \mathcal P^\gpfr(\Sm_S)$
is the inverse image functor. 
\par\noindent
We let us
freely use 
equivalences 
$\mathcal P^\gpfr(\Sm_S)\simeq\mathcal P^\fr(\Sm_S)^\gp$, 
$S_\gpfr^\fr(F)\simeq F^\gp$,
where the right sides are 
subcategory
of
the group complete objects in
$\mathcal P^\fr(\Sm_S)$,
and
the group completion
of
$F$.


\item\label{notation:gammaFr}
$\gamma_{\Fr_+}\colon\Sm_S\to\Fr_+(S)$;
$\gamma_{\Fr_{+}}^*$ and $\gamma^{\Fr_{+}}_*$ are the inverse and direct images .

\item \label{eq:ovZF}
$\ovZF(X,Y)=\operatorname{coker}(\ZF(X\times\Ao,Y)\stackrel{0,1}{\rightrightarrows}\ZF(X,Y))$,
\cite[Def. 2.11]{hty-inv}
for $\ZF(-,Y)$.







\item \label{notation:SptHZgpfr}
${\SptHZfr_\Sigma}(\Sm_S)={\SptHZ_\Sigma}(\Corr^\fr(\Sm_S))$, and
${\SptHZnfr_\Sigma}(\Sm_S)={\SptHZn_\Sigma}(\Corr^\fr(\Sm_S))$,\\
$\Spc^\Zgpfr_{\Sigma}(-)=(\Spt^{\bbZ}_\geq)_{\Sigma}(\Corr^\gpfr(-))$,
see Notation \ref{notation:calPsonestwostar} for universal pattern. 




%
%
%


\item \label{notation:S1Pwedge1Gmwedge1}
$S^1 = \cofib(\partial\Delta^1\to \Delta^1)\in\Spc$,\\
$\PP^{\wedge 1}_S=\cofib(\pt_k\xrightarrow{\infty} \PP^1_S)\in\Spc(\Sm_S)$,\\
$\Gmwo=\cofib(\pt_k\xrightarrow{1}\Gm)\in\Spc(\Sm_S)$.


\item\label{notation:superscriptPPGmcompln}
$\calSP^{\mathbb S^{-1},*}_\star(-)=\calSP^{*,\mathbb S^{-1}}_\star(-)=\calSP^*_\star(-)[\mathbb S^{\wedge -1}]$, 
where $\mathbb S=\PP^{\wedge 1},\bbG_m^{\wedge 1},S^1$. 
Examples:\\
$\Spt^{\PP^{-1},*}_\star(-)=\Spt^*_\star(-)[(\PP^1)^{\wedge -1}]\simeq \Spc^{\PP^{-1},*}_\star(-)$,\\
$\Spt^{\bbG_m^{-1},*}_\star(-)=\Spt^*_\star(-)[\bbG_m^{\wedge -1}]$,
$\Spc^{\bbG_m^{-1},*}_\star(-)=\Spc^*_\star(-)[\bbG_m^{\wedge -1}]$,\\
$\Spc^{S^{-1}}(\calC)\simeq\Spt(\calC)\simeq \Spt^{S^{-1}}(\calC)$.

\item 
$\Sigma_{\mathcal E}^*\colon \calSP^*(-)\leftadjright\calSP^*(-)\colon\Omega_{\mathcal E}^*$,
$\Sigma_{\mathcal E}^{\infty,*}\colon \calSP^*(-)\leftadjright\calSP^{*,\mathcal E}(-)\colon\Omega_{\mathcal E}^{\infty,*}$.
Example:\\
$\Sigma_{S^1}^*$ and $\Omega_{S^1}^*$ are schemewise.
\item 
We let us short notation $\Sigma^{\infty,\gpfr,\Sigma}_{\mathcal E}$ to $\Sigma^{\infty,\gpfr}_{\mathcal E}$,
while $\Sigma^{\infty,\gpfr}_{\mathcal E}$ does not always preserves radditive presheaves.

\item \label{notation:supersubscriptcombPPGm}
$\Sigma_{\mathcal E}^{*,\star}\colon \calSP^*_{\star}(-)\leftadjright\calSP^{*}_{\star}(-)\colon\Omega_{\mathcal E}^{*,\star}$,
$\Sigma_{\mathcal E}^{\infty,*,\star}\colon \calSP^*_{\star}(-)\leftadjright\calSP^{*,\mathcal E}_{\star}(-)\colon\Omega_{\mathcal E}^{\infty,*,\star}$.
Example:\\
$\Sigma_{S^1}^{*,\Ao,\et} \dashv\Omega_{S^1}^{*,\Ao,\et}$.

\item \label{notation:supersubscriptPPGm:short}
$\Sigma_{\mathcal E,\star}^{*}\colon \calSP^*(-)\leftadjright\calSP^{*}_{\star}(-)\colon\Omega_{\mathcal E,\star}^{*}$,
$\Sigma_{\mathcal E,\star}^{\infty,*}\colon \calSP^*(-)\leftadjright\calSP^{*,\mathcal E}_{\star}(-)\colon\Omega_{\mathcal E,\star}^{\infty,*}$.
Example:\\
$\Sigma^{\infty}_{S^1,\Ao,\et}\simeq\Sigma^{\infty,\Ao,\et}_{S^1}L_{\Ao,\et}$ and $\Omega^\infty_{S^1,\Ao,\et}\simeq E_{\Ao,\et}\Omega_{S^1}^{\infty,\Ao,\et}$.
\\
We let us short notation 
$\Sigma^{\infty,*}_{\mathcal E,\star}$ and $\Omega^{\infty,*}_{\mathcal E,\star}$ to
$\Sigma^{\infty,*}_{\mathcal E}$ and $\Omega^{\infty,*}_{\mathcal E}$
when 
the context
indicates
subscript $\star$ for the 
target 
and 
source 
categories of the above functors
respectively.


\item
$L_I\colon \calSP(\mathcal C)\to \calSP_I(\mathcal C)$
is the left adjoint to the embedding of the subcategory of 
$I$-invariant objects with respect to a given interval $I$ in a symmetric monoidal category $\mathcal C$.

\item
$L_\tau\colon \calSP(\mathcal C)\to \calSP_\tau(\mathcal C)$
is the $\tau$-sheafification
for a given Grothendiect topology $\tau$ on $\mathcal C$.\\
$F\simeq_\tau G$ denotes a $\tau$-local equivalence, that means $L_\tau F\simeq L_\tau G$.


\item \label{notation:FtauHXF}
$F_\tau$ is
the sheafification of a given abelian presheaf $F$;
$H_\tau^*(X,F)=H_\tau^*(X,F_\tau)$.

\item \label{notation:hypet}
$\calSP_\et(-)$ are the hypercomplete \'etale sheaves.\\
$L_\et\colon \calSP(-)\to \calSP_\et(-)$ is the left adjoint to the embedding.
A morphisms $v\colon F\to G$ 
is $\et$-equivalence, i.e. $F\simeq_\et G$, 
if 
$L_\et F\simeq L_\et G$. 
Note that 
$\et$-equivalences are called hypercomplete \'etale local equivalences.

\item\label{notation:subscriptbdleqgeq}
$\Spt^*_{\ptaubdleqgeq}(-)=
L_\tau(\Spt^*_{\bdleqgeq}(-))$,
for a given $\tau$,
see Notation \ref{notation:superscriptsubscript} and \ref{notation:SptgeqlSptbSptgeq}.

\item\label{notation:subscriptleq}
$\Spt^*_{\ptauleql{0}}(-)=
L_\tau(\Spt^*_{\leq0}(-))$,
$\Spt^*_{\ptauleq}(-)=
L_\tau(\Spt^*_{\geq}(-))$,
see Notation \ref{notation:superscriptsubscript} and \ref{notation:SptgeqlSptbSptgeq}.



\item\label{notation:subscriptgeq}
$\Spt^*_{\ptaugeql{0}}(-)=
L_\tau(\Spt^*_{\geq0}(-))$,
is the category of $\tau$-locally connective objects;\\
$\Spt^*_{\ptaugeq}(-)=
L_\tau(\Spt^*_{\geq}(-))$,
see Notation \ref{notation:superscriptsubscript} and \ref{notation:SptgeqlSptbSptgeq}.

\item \label{notation:SptgeqlSptbSptgeq}
$\Spt_{\geq b}=(E\in\Spt|\pi_i(E)=0, i<b)$, $\Spt_{\leq b}=(E\in\Spt|\pi_i(E)=0, i>b)$. \\
$\Sptgeq=\bigcup_{l\in\bbZ}\Spt_{\geq b}$,
$\Sptb=\bigcup_{l_0,_1\in\bbZ}(\Spt_{\leq b_1}\cap\Spt_{\geq b_0})$.

\item \label{notation:hetl}
\hetl = hypercomplete \'etale locally. 

\item \label{eq:tetgeqltgeql}
$t_{\geq b}\colon \Spc\to \Spc_{\geq b}$
is the connective cover functor,
\\
$t_{\geq b}\colon \Spc(\Sm_S)\to \Spc_{\geq b}(\Sm_S)$
is the schemewise connective cover functor,
\\
$t^\et_{\geq b}\colon \Spc_\et(\Sm_S)\to \Spc_{\et_{\geq b}}(\Sm_S)$
is the \hetl 
connective cover functor,
that lands in
the subcategory spanned by \hetl $l$-connective objects.
So 
$t^\et_{\geq b}(F)\simeq L_\et t_{\geq b}(R_\et F)$.

\item\label{notation:effveff}$\SptPAniseff(\Sm_k)$ and $\SptPAnisveff(\Sm_k)$ are the subcategories of effective and very effective objects in $\SptPAnis(\Sm_k)$ in sense of \cite{zbMATH06035488}.

\item 
$\mathrm{Map}(-,-)$,
or
$\mathrm{Map}^\Spt_{\calSP}(-,-)$,
denotes the mapping space, 
or the mapping spectrum, for 
an $\infty$-category $\calSP$,
or
a stable $\infty$-category $\calSP$,
respectively.

\item \label{notation:pi(F)(U)}
$\pi_{i}(F)(U)=\pi_i\mathrm{Map}_{\Spt(\Sm_S)}(\Sigma^\infty_{\PP^1}U_+,F)$.
\item \label{notation:piAtau(F)(U)}
$\pi_{i,0}^{\Ao,\tau}(F)(U)=\pi_i\mathrm{Map}_{\Spt^{\PP^{-1}}_{\Ao,\tau}(\Sm_S)}(\Sigma^\infty_{\PP^1}U_+,F),$
$U\in\Sm_S$, $F\in\Spt^{\PP^{-1}}_{\Ao,\tau}(\Sm_S)$.

\item
%
$\Re^{*,\star_0}_{\star_1}\colon \SptS^{*}_{\star_0}(\Sm_k)\to\Spt_{\star_1}(\Et_k)$,\\ 
$\Re^{*,\star_0}_{\star_1}=\Re^{*,\star_1}_{\star_1}L^{*,\star_0}_{\star_1}$.\\
Example: $*=\Gm$, $\star_0=(\A^1,\nis)$, $\star_1=(\et,\hat{l})$.
\item \label{notation:Pconst}
$\Pconst^{\star}_{\star}\colon \Spt_{\star}(\Et_k)\to\Spt_{\star}(\Sm_k)$ 
is the inverse image along $\Et_k\to\Sm_k$;
\\
$\Pconst^{\star_0}_{*,\star_1}=
E^{*,\star_{01}}_{\star_1}\Str^{\star_0}_{*,\star_{01}}\Pconst^{\star_0}_{\star_0}
\colon \Spt_{\star_0}(\Et_k)\to\Spt^{*}_{\star_1}(\Sm_k)$,
where $\star_{01}=\star_0\cup\star_1$.

\item \label{notation:functorsGammas}
$\Gamma^{*,\star_0}_{\star_1}
\colon
\Spt^{*}_{\star_0}(\Sm_k)\to\Spt_{\star_1}(\Et_k)$,
$\Gamma^{*,\star_0}_{\star_1}\simeq \Gamma^{\star_1}_{\star_1} \Fog^{*,\star_0}_{\star_1}$,
\\$\Gamma^{\star_1}_{\star_1}$ is the inverse image along $\Et_k\to\Sm_k$.
%
%


\end{enumerate}

\section{Recollection of conservativities}

\subsection{Completion conservativity}\label{subsection:CompletionConservativity}

For closely related results, see \cite[§2 and §3]{Mathew_2017} and \cite[§2.1]{BachmannRigidity}.
Let $\calSP$ be a prestable presentable symmetric monoidal $\infty$-category;
$n\colon \mathbf{1}\to \mathbf{1}\in\calSP$.
Since $\mathbf{1}$ is strongly dualizable, $\mathbf{1}/l$, see Notation \ref{notation:Fn=cofibFnF}, is such too.
Recall that the subcategory $\calSP_\compln\subset \calSP$ has the right orthogonal the subcategory
$\calSP[n^{-1}]$ spanned by the objects $F$ such that $n\colon F\xrightarrow{\simeq}F$.
\begin{lemma}\label{def:Completion}
The subcategory $\calSP_\compln$ is reflective and
the left adjoint $\calSP\to\calSP_\compln$ 
to the embedding functor is given by $F\mapsto F_\compln$
\begin{equation}\label{eq:Fwn}F_\compln= \colim_\alpha F/l^\alpha\in \calSP.\end{equation}
\end{lemma}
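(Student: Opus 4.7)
The plan is to identify the functor $F\mapsto F_\compln$ defined by the colimit formula as the left adjoint to the inclusion $\calSP_\compln\hookrightarrow\calSP$. Existence of a reflection follows formally from the fact that $\calSP_\compln$ is defined by an orthogonality condition against a set of generators in the presentable $\infty$-category $\calSP$; the content of the lemma is the explicit formula, which amounts to verifying two properties: (a) $F_\compln$ lies in $\calSP_\compln$ for every $F\in\calSP$, and (b) the canonical morphism $\eta_F\colon F\to F_\compln$ has cofiber in $\calSP[n^{-1}]$, so that the universal property holds.

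For (a), I would use the strong dualizability of $\mathbf{1}/l^\alpha$ already noted just before the statement. For any $H\in\calSP[n^{-1}]$, multiplication by $l^\alpha$ is an equivalence on $H$, so $H\otimes\mathbf{1}/l^\alpha=\cofib(l^\alpha\colon H\to H)\simeq 0$; dualizability then yields
\[
\Map(F/l^\alpha,H)\simeq \Map(F,H\otimes(\mathbf{1}/l^\alpha)^\vee)\simeq 0,
\]
since the dual is again an $l^\alpha$-torsion object in the same sense. Converting the colimit on the source side to a limit on the mapping-space side,
\[
\Map(F_\compln,H)\simeq \lim_\alpha \Map(F/l^\alpha,H)\simeq 0,
\]
so $F_\compln$ is orthogonal to $\calSP[n^{-1}]$ and hence lies in $\calSP_\compln$.

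For (b), the tower $F/l\to F/l^2\to\cdots$ has transitions induced from the squares $(\mathrm{id},\cdot l)$ between the maps $l^\alpha\colon F\to F$, and the cofiber projections $F\to F/l^\alpha$ assemble (after choosing the coherent higher data automatic in the $\infty$-categorical setting) into the desired morphism $\eta_F\colon F\to F_\compln$. To conclude that $\eta_F$ is the unit of the reflection, I would identify $\cofib(\eta_F)$ as a filtered colimit of objects on which $l$ is invertible, hence as an object of $\calSP[n^{-1}]$. Combined with (a), this gives for every $G\in\calSP_\compln$ a fiber sequence
\[
\Map(\cofib(\eta_F),G)\longrightarrow \Map(F_\compln,G)\longrightarrow \Map(F,G),
\]
whose first term vanishes by the orthogonality defining $\calSP_\compln$, so the second map is an equivalence — precisely the required universal property.

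The main obstacle will be the bookkeeping in (b): the cofiber projections $F\to F/l^\alpha$ are not strictly compatible with the tower transitions (the composite $F\to F/l^\alpha\to F/l^{\alpha+1}$ differs by a factor of $l$ from the direct projection $F\to F/l^{\alpha+1}$), so producing a coherent unit transformation and correctly identifying $\cofib(\eta_F)$ as $n$-inverted requires careful use of the prestable structure of $\calSP$. This is the step where the telescope interpretation of the colimit and the dualizability of the Moore-type objects $\mathbf{1}/l^\alpha$ genuinely enter; everything else is formal orthogonal-subcategory nonsense.
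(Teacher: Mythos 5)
There is a genuine gap, and it traces to the direction of orthogonality in your step (a). You verify $\Map(F_\compln,H)\simeq 0$ for $H\in\calSP[n^{-1}]$, which shows that $F_\compln$ lies in the \emph{left} orthogonal $^{\perp}\calSP[n^{-1}]$, namely the subcategory of $l$-nilpotent objects. But for $\calSP_\compln$ to be \emph{reflective} (as the lemma asserts) it must be the \emph{right} orthogonal $(\calSP[n^{-1}])^{\perp}$, and membership there is detected by the other condition $\Map(H,F_\compln)\simeq 0$. These two orthogonals genuinely differ: the left orthogonal is closed under colimits but not limits (e.g.\ $\lim_\alpha \mathbf 1/l^\alpha\simeq \mathbf 1_\compln$ has $\mathbf 1_\compln[n^{-1}]\neq 0$), so it admits a right adjoint to the inclusion, not a left adjoint. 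Your dualizability computation is correct as far as it goes; it simply lands you in the wrong subcategory.

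This is also why the ``bookkeeping'' you flag in (b) is not merely a bookkeeping issue. With the only tower that makes a sequential colimit meaningful — transitions induced from $(\mathrm{id},\cdot l)$ — one has $\colim_\alpha F/l^\alpha\simeq \cofib(F\to F[n^{-1}])$, the $l$-power-torsion of $F$, and the candidate ``unit'' assembled from the cofiber projections is the composite $F\to F[n^{-1}]\to \colim_\alpha F/l^\alpha$, which is nullhomotopic (two consecutive arrows of a cofiber sequence). So there is no nonzero natural map $F\to\colim_\alpha F/l^\alpha$ to serve as $\eta_F$. The formula for the $l$-completion is $F_\compln\simeq \lim_\alpha F/l^\alpha$ along the reduction maps $F/l^{\alpha+1}\to F/l^\alpha$; with that tower the cofiber projections do cohere into $\eta_F\colon F\to\lim_\alpha F/l^\alpha$, the fiber of $\eta_F$ is $\lim(F\xleftarrow{l}F\xleftarrow{l}\cdots)\in\calSP[n^{-1}]$, and the dualizability argument in (a), relocated to the correct side as $\Map(H,F/l^\alpha)\simeq\Map(H\otimes(\mathbf 1/l^\alpha)^\vee,F)\simeq 0$, then does show $\lim_\alpha F/l^\alpha\in(\calSP[n^{-1}])^{\perp}$. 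For comparison, the paper's own proof is terse and also does not separately check (a); you were more careful to isolate the two steps, which is the right instinct — the issue is solely that both steps are being run against the colimit formula, where they cannot close.
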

\begin{proof}
The equivalence \eqref{eq:Fwn} implies a cofiber sequence
\begin{equation}\label{eq:sequence:FnFFn}F[n^{-1}]\to F\to F_\compln\end{equation}
where
\begin{equation}\label{eq:Fn=colimFnF}
F[n^{-1}]= \colim(\dots\xrightarrow{n}F\xrightarrow{n} F\xrightarrow{n}\dots)\in \calSP,
\end{equation}
see \Cref{rem:Fninverse}.
Note that $F[n^{-1}]\in \calSP[n^{-1}]$ 
because the arrows on the colimits of the rows in the following commutative diagram are inverse to each other 
\begin{equation*}\xymatrix{
    \dots\ar[r]^{n}\ar[rd]^{1}&F\ar[r]^{n}\ar[rd]^{1}& F\ar[r]^{n}\ar[rd]^{1}&\dots\\
    \dots\ar[r]^{n}&F\ar[r]^{n}\ar[u]^{n}& F\ar[r]^{n}\ar[u]^{n}&\dots.
}\end{equation*}
Then for any $F^\prime\in\calSP_\compln$ the cofiber sequence \eqref{eq:sequence:FnFFn} induces the fibre sequence in $\Spt$
formed by the mapping spectra in the stable category $\calSP$
\begin{equation}\label{eq:FnFFn:fibseqMapSpt}
\calSP(F[n^{-1}], F^\prime)\leftarrow \calSP(F, F^\prime)\leftarrow \calSP(F_\compln, F^\prime).
\end{equation}
and
the left term in \eqref{eq:FnFFn:fibseqMapSpt} is trivial by definition of $\calSP_\compln$. Hence the right side arrow is an equivalence.
Hence \eqref{eq:Fwn} defines the left adjoint to the embedding $\calSP_\compln\to\calSP$.
\end{proof}
\begin{remark}\label{rem:Fninverse}
    Note that $F\mapsto F[n^{-1}]$, see \eqref{eq:Fn=colimFnF}, 
    %
    defines a right adjoint functor to the embedding $\mathcal C[n^{-1}]\to\mathcal C$, 
    though it is not formally used in the text.
\end{remark}
\begin{proposition}\label{prop:complettentoquotient_object}
Let $F\to G\in \calSP$ be a morphism. 
Then 
$F_\compln\simeq G_\compln$,
if and only if 
$F/l\simeq G/l$.
\end{proposition}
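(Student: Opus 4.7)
The strategy is to reduce the biconditional to a conservativity statement on the $l$-complete subcategory $\calSP_\compln$. Set $C = \cofib(F \to G) \in \calSP$. The completion functor $(-)_\compln$ is a left adjoint by \Cref{def:Completion}, hence preserves cofibers, giving
\[
C_\compln \simeq \cofib(F_\compln \to G_\compln);
\]
the construction $(-)/l = \cofib(l \cdot -)$ is itself a cofiber and preserves cofibers, so $C/l \simeq \cofib(F/l \to G/l)$. Since $\calSP$ is prestable, a morphism is an equivalence precisely when its cofiber is zero (pass to the stabilization if one prefers a stable ambient category). The proposition therefore amounts to the biconditional
\[
C_\compln \simeq 0 \;\Longleftrightarrow\; C/l \simeq 0.
\]

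The main technical input is the natural equivalence $C/l \simeq C_\compln/l$. This is obtained by applying $(-)/l$ to the cofiber sequence \eqref{eq:sequence:FnFFn} for $C$,
\[
C[l^{-1}]/l \to C/l \to C_\compln/l,
\]
and observing that $C[l^{-1}]/l \simeq 0$, because multiplication by $l$ is an equivalence on $C[l^{-1}]$ (exactly the shift argument used inside the proof of \Cref{def:Completion}).

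The forward direction of the biconditional is then immediate: $C_\compln \simeq 0$ forces $C/l \simeq C_\compln/l \simeq 0$. For the converse, assume $C/l \simeq 0$; then $C_\compln/l \simeq 0$, so multiplication by $l$ is an equivalence on $C_\compln$, placing $C_\compln$ simultaneously in $\calSP_\compln$ and in $\calSP[l^{-1}]$. By the orthogonality of these two subcategories recalled before \Cref{def:Completion}, one has $\mathrm{Map}(C_\compln, C_\compln) \simeq 0$, and hence $C_\compln \simeq 0$. The only mild subtlety in the whole argument is the use of ``cofiber zero implies equivalence'' in the prestable setting, which is routinely dispatched by passage to the stable hull into which $\calSP$ embeds fully faithfully; no substantial obstacle is expected.
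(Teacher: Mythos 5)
Your proof is correct, and it takes a genuinely different route from the paper's. The paper argues only the forward implication, by a short induction: from $F/l\simeq G/l$ it deduces $F/l^\alpha\simeq G/l^\alpha$ for every $\alpha$ (via the octahedral cofiber sequence relating $F/l$, $F/l^{\alpha}$ and $F/l^{\alpha+1}$), and then invokes the formula $F_\compln\simeq\colim_\alpha F/l^\alpha$ from \eqref{eq:Fwn}; the converse is left implicit. You instead pass to the cofiber $C=\cofib(F\to G)$, reformulate both sides of the biconditional as vanishing statements, establish the key natural equivalence $C/l\simeq C_\compln/l$ from the cofiber sequence \eqref{eq:sequence:FnFFn}, and finish with the orthogonality of $\calSP_\compln$ and $\calSP[l^{-1}]$. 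What this buys is an honest, symmetric treatment of both directions, no induction, and no dependence on the explicit (co)limit formula of \eqref{eq:Fwn} — only on the cofiber sequence and the orthogonality, which is arguably more robust; the paper's route is shorter. The one caveat is exactly the one you flag: treating ``equivalence'' as ``zero cofiber in $\calSP$,'' and reading $C_\compln$ as $\cofib(F_\compln\to G_\compln)$ computed in $\calSP$ (i.e.\ knowing $\calSP_\compln$ is closed under cofibers), is clean only in a stable ambient category. That looseness is inherited from the paper, whose proof of \Cref{def:Completion} already speaks of mapping spectra ``in the stable category $\calSP$,'' and in the actual downstream use \Cref{prop:complettentoquotient_category} the category is stable, so no harm is done.
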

\begin{proof}
Suppose $F/l\simeq G/l$.
By induction on $\alpha\in \mathbb Z_{>0}$ if follows that $F/l^\alpha\simeq G/l^\alpha$ for each $\alpha\in\bbZ_{\geq 1}$. 
Hence $F\simeq G$ by \eqref{eq:Fwn}. 
\end{proof}

\begin{example}
Given $l\in\bbZ$, we
define the subcategory 
$\SptHZ_\compln(\mathcal C)$ of 
the $\infty$-category
$\SptHZ(\mathcal C)$
with respect to the morphism $n\colon \HZ\to\HZ\in\SptS$, see Notation \ref{notation:SptZ}.
\end{example}

\begin{proposition}\label{prop:complettentoquotient_category}
Let $\mathcal C$ be a small $\infty$-category,
the functor
$\SptHZ_\compln(\mathcal C)\to \SptHZn(\mathcal C)$
is conservative.
\end{proposition}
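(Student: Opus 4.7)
The strategy is to reduce to the pointwise statement over a single object of $\mathcal C$ and then to invoke the just-proven \Cref{prop:complettentoquotient_object}. Concretely, I want to show: if $f\colon F\to G$ in $\SptHZ_\compln(\mathcal C)$ has the property that its image $f/l\colon F/l\to G/l$ in $\SptHZn(\mathcal C)$ is an equivalence, then $f$ is itself an equivalence.

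First I would verify that both the source and the functor are pointwise in $\mathcal C$. Equivalences in presheaf $\infty$-categories $\SptHZ(\mathcal C)$ and $\SptHZn(\mathcal C)$ are detected objectwise, and the functor $F\mapsto F/l=\cofib(n\colon F\to F)$ is a cofiber, hence computed objectwise. Moreover, by the colimit formula $F_\compln\simeq\colim_\alpha F/l^\alpha$ of \Cref{def:Completion} and the fact that colimits in presheaf $\infty$-categories are pointwise, membership in the reflective subcategory $\SptHZ_\compln(\mathcal C)$ is equivalent to the pointwise condition $F(X)\in\SptHZ_\compln$ for every $X\in\mathcal C$. So it is enough to establish the statement when $\mathcal C=\mathrm{pt}$: namely, that $M\mapsto M/l$ is conservative on $\SptHZ_\compln$.

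For the pointwise step, let $f\colon M\to N$ be a map in $\SptHZ_\compln$ such that $f/l$ is an equivalence. By \Cref{prop:complettentoquotient_object} applied to $f$, the induced map $f_\compln\colon M_\compln\to N_\compln$ is an equivalence. Since $M,N\in\SptHZ_\compln$, the unit maps $M\to M_\compln$ and $N\to N_\compln$ are themselves equivalences, and hence $f$ is an equivalence, as desired.

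There is no real obstacle here: the argument is a direct application of \Cref{prop:complettentoquotient_object} together with the pointwise nature of cofibers and of $l$-completion in presheaf $\infty$-categories. The only subtlety to spell out is the identification of the two potential meanings of $\SptHZ_\compln(\mathcal C)$ (pointwise $l$-complete versus $l$-complete in the presheaf category), which is resolved by the colimit formula \eqref{eq:Fwn}.
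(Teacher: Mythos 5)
Your proof is correct and uses the same key ingredient as the paper, namely \Cref{prop:complettentoquotient_object}. The paper's proof is slightly more direct: it applies that proposition to the presheaf category $\SptHZ(\mathcal C)$ itself (observing that $\cofib(n\colon\Id\to\Id)$ is the functor $\SptHZ(\mathcal C)\to\SptHZn(\mathcal C)$), rather than first reducing to the objectwise case; your pointwise reduction is valid but unnecessary since $\SptHZ(\mathcal C)$ already satisfies the hypotheses of \Cref{prop:complettentoquotient_object}.
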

\begin{proof}
Since $\cofib(\HZ\xrightarrow{n}\HZ)\simeq\HZn$, where $n\colon \Id_{\SptHZ(\calC)}\to\Id_{\SptHZ(\calC)}$,
the claim follows by \Cref{prop:complettentoquotient_object}.
\end{proof}

\subsection{Whitehead theorem}

\begin{lemma}\label{prop:SWTCnservativityLinearisationSH}
The functor $\Spt_{\geq}\to \SptHZ_{\geq}$ is conservative.
\end{lemma}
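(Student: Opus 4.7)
The plan is to use the classical Hurewicz theorem for connective spectra. Since both categories are stable and the functor $(-) \wedge H\bbZ \colon \Spt_{\geq} \to \SptHZ_{\geq}$ is exact, conservativity on morphisms reduces, via passing to the cofiber, to the following statement: if $E \in \Spt_{\geq}$ and $E \wedge H\bbZ \simeq 0 \in \SptHZ$, then $E \simeq 0$.

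Assume $E \wedge H\bbZ \simeq 0$. Then for every integer $n$ we have
\[
H_n(E;\bbZ) := \pi_n(E \wedge H\bbZ) = 0.
\]
Since $E$ is connective, $E \in \Spt_{\geq 0}$, so $\pi_i(E) = 0$ for $i < 0$. I would now argue by induction on $n \geq 0$ that $E \in \Spt_{\geq n}$. The base case $n=0$ is the connectivity assumption. For the inductive step, suppose $E \in \Spt_{\geq n}$. Then the Hurewicz theorem for spectra (equivalently, the edge map of the Atiyah--Hirzebruch spectral sequence, or the observation that the first nontrivial Postnikov section of $E$ is $\Sigma^n H\pi_n(E)$) yields a natural isomorphism
\[
\pi_n(E) \xrightarrow{\ \cong\ } H_n(E;\bbZ) = 0,
\]
so that $E \in \Spt_{\geq n+1}$.

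By induction, $E$ is $n$-connective for every $n \geq 0$, so $\pi_i(E) = 0$ for all $i \in \bbZ$. Since $\Spt$ is a Postnikov-complete (hence hypercomplete) stable $\infty$-category, an object with vanishing homotopy groups is a zero object, so $E \simeq 0$. This proves conservativity.

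The only genuine content here is the Hurewicz isomorphism $\pi_n(E) \cong H_n(E;\bbZ)$ for an $n$-connective spectrum, which I would either quote from a standard reference on stable homotopy theory or deduce directly from the cofiber sequence $\tau_{\leq n} E \to E \to \tau_{> n} E$, using that $\tau_{\leq n} E \simeq \Sigma^n H\pi_n(E)$ when $E$ is $n$-connective, together with the identification $\pi_n(\Sigma^n H\pi_n(E) \wedge H\bbZ) \cong \pi_n(E)$. No subtlety beyond that is expected; this is the standard Whitehead-for-spectra argument.
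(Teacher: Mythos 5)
Your proof is correct and rests on the same mathematical core as the paper's, namely the stable Whitehead theorem. The two presentations differ mildly: the paper passes to homotopy categories, decomposes the integral-homology functor through $\mathbf D^-(\mathrm{Ab})$, and cites \cite[Proposition 6.30]{Schwede} as a black box, whereas you reduce conservativity to reflection of zero objects via cofibers (legitimate, since the categories are stable and the functor is exact) and then run the Hurewicz induction from first principles. This makes your argument more self-contained. One small slip: per Notation \ref{notation:SptgeqlSptbSptgeq}, $\Spt_\geq=\bigcup_b\Spt_{\geq b}$ consists of \emph{bounded-below} spectra, not connective ones, so the phrase ``Since $E$ is connective, $E\in\Spt_{\geq 0}$'' should read ``Since $E\in\Spt_\geq$, we have $E\in\Spt_{\geq b}$ for some $b$''; replacing $E$ by $\Sigma^{-b}E$ to start the induction fixes this and does not affect the conclusion $E\simeq 0$.
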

\begin{proof}
Since 
the functor of homologies $h^*\colon \SH^\mathrm{top}_{\geq}\to \prod_{l\in\bbZ_{\geq 0}}\mathrm{Ab}$ 
decomposes as $\SH^\mathrm{top}_{\geq}\to \mathbf D^-(\mathrm{Ab})\to \prod_{l\in\bbZ_{\geq 0}}\mathrm{Ab}$
and is conservative
because of the stable Whitehead theorem \cite[Proposition 6.30]{Schwede},
the conservativity of the functor $\SH^\mathrm{top}_{\geq}\to \mathbf D^-(\mathrm{Ab})$ follows.
Finally, the claim follows by the conservativity of the functor
$\Spt\to\SH^\mathrm{top}$, see Notation \ref{notation:SpcSptSHtop},
and because $\mathbf D^-(\mathrm{Ab})$ is the homotopy category of the $\infty$-category $\SptHZ_\geq$, see Notation \ref{notation:SptZ} and \ref{notation:DAb}.
\end{proof}

\begin{theorem}\label{prop:SwnConservativityLinearisation}
Let $\mathcal C$ be a small $\infty$-category. 
The functors 
\begin{gather}
\label{eq:conserv:SHZ}
\SptS_{\geq}(\mathcal C)\to \SptHZ_{\geq}(\mathcal C),\\ 
\label{eq:conserv:SenHZnZ}
\SptS_{\geq,\compln}(\mathcal C)\to \SptHZn_{\geq}(\mathcal C)
\end{gather} are conservative.

\end{theorem}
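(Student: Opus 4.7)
The plan is to bootstrap both statements from \Cref{prop:SWTCnservativityLinearisationSH} (the pointwise Whitehead-type statement) together with \Cref{prop:complettentoquotient_category}.

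First, I would observe that conservativity of a functor between presheaf $\infty$-categories of spectra can be checked pointwise: a morphism $f\colon F\to G$ in $\SptS(\mathcal C)$ is an equivalence iff $f(C)\colon F(C)\to G(C)$ is an equivalence in $\SptS$ for every $C\in\mathcal C$, and similarly for $\SptHZ(\mathcal C)$ and for the connective subcategories $\SptS_\geq(\mathcal C)$, $\SptHZ_\geq(\mathcal C)$, which are closed under limits and whose inclusion into the ambient category is fully faithful. Since the forgetful functor $\SptHZ\to\SptS$ preserves all limits and is evaluated pointwise in the presheaf setting, the induced functor on presheaves is pointwise the underlying-spectrum functor. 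Therefore for the conservativity of \eqref{eq:conserv:SHZ} it suffices to check conservativity of $\SptS_\geq\to\SptHZ_\geq$, which is exactly \Cref{prop:SWTCnservativityLinearisationSH}.

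For \eqref{eq:conserv:SenHZnZ} I would factor the functor as
\[
\SptS_{\geq,\compln}(\mathcal C)\xrightarrow{(\alpha)}\SptHZ_{\geq,\compln}(\mathcal C)\xrightarrow{(\beta)}\SptHZn_{\geq}(\mathcal C),
\]
where $(\alpha)$ is induced by the Eilenberg--MacLane functor and $(\beta)$ is induced by smashing with $\HZ/l$ (equivalently, by the cofiber of multiplication by $l$). The arrow $(\alpha)$ is conservative: $l$-completion preserves and reflects equivalences inside the $l$-complete subcategories, and after forgetting to $\SptS_\geq(\mathcal C)$ and then applying the linearization functor, conservativity reduces to \eqref{eq:conserv:SHZ} applied to $l$-complete objects. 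Concretely, if $(\alpha)(f)$ is an equivalence then $f$ is an equivalence in $\SptS_\geq(\mathcal C)$ by the first part, and being a morphism in the $l$-complete subcategory it is an equivalence there as well. The arrow $(\beta)$ is conservative by \Cref{prop:complettentoquotient_category}, restricted to the connective parts on both sides (the restriction is harmless because both the $l$-completion and the cofiber $F\mapsto F/l$ commute with the connective cover, so the connective $l$-complete piece is mapped to the connective piece of $\SptHZn(\mathcal C)$, and conservativity of the full functor implies conservativity of the restriction).

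The main thing to be careful about is not \emph{what} to do, but \emph{which} subcategories one is working in: one has to check that connectivity and $l$-completeness are preserved and reflected at each step, and in particular that an $l$-complete spectrum whose underlying spectrum is trivial is itself trivial (which is immediate from the cofiber sequence $F[n^{-1}]\to F\to F_\compln$ of \Cref{def:Completion}). Once these bookkeeping points are settled, the argument is a direct composition of the two already-established conservativity statements, so no further obstacle appears.
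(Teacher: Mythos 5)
Your argument for the first conservativity statement is correct and is exactly the paper's argument: reduce to a pointwise check via the product-category diagram (or, equivalently, via the observation that evaluation at each object of $\mathcal C$ is jointly conservative) and apply \Cref{prop:SWTCnservativityLinearisationSH} pointwise.

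For the second statement your factorization through $\SptHZ_{\geq,\compln}(\mathcal C)$ works, but the bookkeeping point you single out — ``an $l$-complete spectrum whose underlying spectrum is trivial is itself trivial'' — is vacuous, since $\SptS_{\geq,\compln}$ is a full subcategory. The non-vacuous thing you actually need and do not verify is that $(\alpha)$ has the stated codomain: that $\HZ\wedge(-)$ carries $l$-complete connective objects to $l$-complete $\HZ$-modules. Without this, writing $\alpha\colon\SptS_{\geq,\compln}(\mathcal C)\to\SptHZ_{\geq,\compln}(\mathcal C)$ is not justified, and if you instead try to force the landing by post-composing with $L_\compln$, the passage ``if $(\alpha)(f)$ is an equivalence then $f$ is an equivalence ... by the first part'' breaks, because an equivalence after $l$-completion does not on its face give an equivalence in $\SptHZ_\geq(\mathcal C)$ that you could feed to \eqref{eq:conserv:SHZ}. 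Fortunately the needed claim is true, and with a short reason: $\HZ\wedge(-)$ commutes with the (filtered) colimit \eqref{eq:Fn=colimFnF}, so $(\HZ\wedge F)[n^{-1}]\simeq\HZ\wedge(F[n^{-1}])$, hence $F[n^{-1}]\simeq 0$ implies $(\HZ\wedge F)[n^{-1}]\simeq 0$, i.e.\ $\HZ\wedge F$ is $l$-complete. With that inserted, your decomposition $\alpha$ then $\beta$, with $\beta$ conservative by \Cref{prop:complettentoquotient_category}, is sound and coincides with what the paper intends. (A variant that sidesteps this check entirely: factor instead as $F\mapsto F/l$ followed by $\HZ\wedge(-)$; the first arrow is conservative on the $l$-complete subcategory directly by \Cref{prop:complettentoquotient_object}, and the second is \eqref{eq:conserv:SHZ}.)
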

\begin{proof}
Consider the commutative diagram
\begin{equation}\label{eq:sectioviseSHZ}\xymatrix{
\prod_{X\in \mathcal C}\Spt_{\geq}\ar[r] & \prod_{X\in \mathcal C}\SptHZ_{\geq}\\
\SptS_{\geq}(\mathcal C)\ar[u]\ar[r] & \SptHZ_\geq(\mathcal C)\ar[u]
,}\end{equation}
where the left vertical arrow takes a presheaf of spectra 
$F\in \SptS_\geq(\mathcal C)$ 
to $F(X)$ for each $X$.
Since the left vertical functor in \eqref{eq:sectioviseSHZ} is conservative,
and the upper horizontal functor is conservative by \Cref{prop:SWTCnservativityLinearisationSH},
it follows that the functor \eqref{eq:conserv:SHZ} is conservative. 
Hence \eqref{eq:conserv:SenHZnZ} is conservative because of \Cref{prop:SWTCnservativityLinearisationSH}.
\end{proof}

\section{Framed motives and motivic spectra or spaces.}\label{sect:Framed} 
The framed motives theory 
regarding the Nisnevich localization
was built 
by Garkusha and Panin 
in \cite{garkusha2018framed}
based on \cite{hty-inv,framed-cancel,ConeTheGNP,surj-etale-exc}.
Using some parts of the results of the mentioned articles 
Elmanto, Hoyois, Khan, Sosnilo, Yakerson \cite{elmanto2021motivic} 
reformulated the main results in the theory using the language of $\infty$-categories, providing powerful new tools and perspectives.


In the present article, we
establish the main results from \cite{garkusha2018framed} in the hypercomplete \'etale context \cite{ClausenMathewHypdescetaleKth}.
We use the $\infty$-categorical methods of \cite{elmanto2021motivic} and refer to \cite{bachmann2021remarks} without transferring between the languages
of model categories and $\infty$-categories used in the references mentioned above.


\subsection{Framed motives and $\infty$-loop spaces}\label{sect:FramedNis} 

In this section, 
we briefly
recall the 
main definitions and results 
of the framed motives theory
from 
\cite{garkusha2018framed}
and
\cite{elmanto2021motivic}
respectively.

Firstly, there are 
Voevodsky's framed correspondences, see
\cite{VoevNotesFr},
\cite[Def. 2.1, 2.3]{garkusha2018framed}, 
which form the mapping sets in 
a category $\Fr_+(\Sm_k)$
enriched over pointed sets with objects smooth $k$-schemes.
Recall also
the Nisnevich sheaves of pointed sets
$\Fr(-,Y)=\varinjlim_n\Fr_n(-,Y)$,
see \cite[Def. 2.8]{garkusha2018framed},
for a smooth scheme
$Y$ over $k$. 
The same notation is used 
for the pointed simplicial sheaf $\Fr(-,Y)$
for a simplicial smooth scheme $Y$ over $k$%
.
With respect to
\cite[Def. 5.2]{garkusha2018framed}
we say
that
the framed motive of 
$Y$
is the Nisnevich sheaf
of
$S^1$-spectra
\begin{equation}\label{eq:FrDeltabullet}
(
\Fr(\Delta^\bullet_k\times -,Y),\dots,
\Fr(\Delta^\bullet_k\times -,\Sigma^l_{S^1}Y),
\Fr(\Delta^\bullet_k\times -,\Sigma^{l+1}_{S^1}Y),\dots
)
\end{equation}
defined by
the presheaf of Segal $\Gamma$-spaces
\begin{equation}\label{eq:KmapstoFrDeltabulletdashKtimesY}
K\mapsto \Fr(\Delta^\bullet_k\times -,K\times Y),
\end{equation}
in sense of \cite{zbMATH03446053}.
Since by
\cite[Th. 6.5]{garkusha2018framed}
the $\Gamma$-space 
\eqref{eq:KmapstoFrDeltabulletdashKtimesY}
is special,
we denote 
by
$\Fr(\Delta^\bullet_k\times -,Y)^\gp$
the associated grouplike special $\Gamma$-space.
By 
\cite[Th 10.7]{garkusha2018framed}
it follows 
an equivalence of spectra
\begin{equation}
\label{eq:OmegaGmSigmaPYUsimeqFrDeltaUSigmaSYgp:lochenseesssmschemeU}
(\Omega^\infty_{\PP^1}\Sigma^\infty_{\PP^1} Y)(U)
\simeq 
\Fr(\Delta^\bullet_k\times U,Y)^\gp
\end{equation}
for a 
henselian local essentially smooth scheme $U$ over $k$,
where
\[\Omega^\infty_{\PP^1}\Sigma^\infty_{\PP^1}\colon\Delta^\mathrm{op}\Sm_k\to \SpcPANis(\Sm_k)\to \Spc(\Sm_k).\]

In \cite[§4]{elmanto2021motivic}, the semiadditive $\infty$-category $\Corr^\fr(\Sm_k)$
was introduced, along with a functor
\begin{equation}\label{eq:FrtoCorrfr}
    \Fr_+(\Sm_k)\to\Corr^\fr(\Sm_k),
\end{equation}
and by \cite[Corollary 2.3.27]{elmanto2021motivic} an equivalence in $\Spc(\Sm_k)$
\begin{equation}\label{eq:LZarLAFrYStosimeqLZarLAhfrYS}
L_{\Zar}L_{\Ao}\Fr(-,Y)\xrightarrow{\simeq} L_{\Zar}L_{\Ao}h^\fr(Y),
\end{equation}
where 
$h^\fr(Y)=\Corr^\fr(-,Y)$,
and 
$\Fr(-,Y)$ is called equationally framed correspondences of infinite level
and denoted $h^\mathrm{efr}(Y)$
in \cite[Definition 2.1.2]{elmanto2021motivic}. 
%
%
%
By \cite[Corollary 3.5.16]{elmanto2021motivic}
the composite of the functors 
\begin{equation}\label{eq:functors:SpcSptANisSpcSmk}
\Spc(\Sm_k)\xrightarrow{\Sigma^\infty_{\PP^1}} 
\SpcPANis(\Sm_k)\xrightarrow{\Omega^\infty_{\PP^1}} 
\Spc(\Sm_k).
\end{equation}
satisfies the natural equivalence
\begin{equation}\label{eq:th:Nisnevichfraedmotivicloopsspaces}
\Omega^\infty_{\PP^1}\Sigma^\infty_{\PP^1} F\simeq \calL_\nis \calL_{\Ao} \gamma_*(\gamma^*F^\gp)
\end{equation}
for any $F\in \Spc(\Sm_k)$.

\begin{remark}
    The claims of
    \eqref{eq:OmegaGmSigmaPYUsimeqFrDeltaUSigmaSYgp:lochenseesssmschemeU} and \eqref{eq:th:Nisnevichfraedmotivicloopsspaces} 
    for finite base fields 
    are covered by 
    \cite{DrKyllfinFrpi00} and \cite[Appendix B]{elmanto2021motivic}
    respectively.
    Nonperfect fields $k$ are covered by \cite{SHIfr}.
\end{remark}

\begin{remark}\label{rem:inperfectZpinvert}
    For our purposes, 
    it is enough to use equivalences \eqref{eq:LZarLAFrYStosimeqLZarLAhfrYS} and \eqref{eq:th:Nisnevichfraedmotivicloopsspaces} with $\bbZ[1/p]$-coefficients,
    that have an alternative proof without use of \cite{SHIfr}.
    The claims over $k$ follow from the results over the perfect field $K=k^\mathrm{perf}$ that is 
    the purely inseparable closure of $k$ 
    by the use of 
    the base change along the morphism $e\colon \Spec K\to \Spec k$
    and the application of \Cref{prop:pinsepextSchchk} 
    similarly as in the proof of \Cref{th:SHI:ZPShfrA}. 
    Note that $e_*\calL_\nis\simeq\calL_\nis e_*$ with respect to notation of \Cref{lm:commute:els:Let_AND_:LA_AND_OmegaGm}.
\end{remark}

\subsection{Framed spaces and hypercomplete \'etale localization}\label{sect:Lfret}

In \Cref{sect:HypcEtFrMotives}, we prove the analogues of \eqref{eq:OmegaGmSigmaPYUsimeqFrDeltaUSigmaSYgp:lochenseesssmschemeU} and \eqref{eq:th:Nisnevichfraedmotivicloopsspaces} 
for hypercomplete \'etale framed motives and framed motivic spaces.
In this subsection, we collect some 
more specific results 
used in the text.
The novel part regards the hypercomplete \'etale localization.


Let $S$ be a scheme and consider the adjunction $\gamma^*:\calSP(\Sm_S)\leftadjright\calSP^\fr(\Sm_S):\gamma_*$
for
a presentable $\infty$-category
$\calSP$. 
We denote $\calL^\fr_{\et}$, $\calL^\fr_{\Ao}$ and $\calL^\fr_{\Ao,\et}$ by the localizations that inverts the image of $\gamma^*$ of the morphisms generated by 
hypercomplete \'etale equivalences, $\Ao$-equivalences and both, respectively. Note that these collections of morphisms are strongly saturated by \cite[Lemma 2.10]{bachmann2020norms} since they are closed under sifted colimits and 2-out-of-3. Note further that they are symmetric monoidal by \cite[Proposition 4.1.7.4]{LurieHA} since $X\times(-)$ preserves hypercomplete \'etale sieves and $\Ao$-projections, see \cite[Proposition 3.2.19]{elmanto2021motivic} for the motivic case.

Denote
\[
\essemb\colon\Sm_S\to\EssSm_S,\quad
\essemb^\fr\colon\Corr^\fr_S\to\Corr^\fr(\EssSm_S)
\]
the embedding functors,
and 
denote
$\essemb_\fr^*=(\essemb^\fr)^*$. 
Since the presheaves $h(Y)$ and $h^\fr(Y)$ are continuous
on $\EssSm_S$,
is follows that 
the functor 
$\essemb^*\colon\calSP^*(\Sm_S)\to\calSP^*(\EssSm_S)$
is equivalent to the fully faithful embedding of the
subcategory of continuous presheaves $\calSP^*(\EssSm_S)$,
where $*=\emptyset,\fr$.



\begin{sblemma}\label{sblm:Lhypetfr}
There is an equivalence
\begin{equation}\label{eq:gammausglssimeqglsgammaus}
    \gamma^*_{\locsh}g_*\essemb^*\simeq g^\fr_*\essemb_\fr^*\gamma^*,
\end{equation}
where 
\[g\colon \Smlh_S\to \mathrm{EssSm}_S,\quad  g^\fr\colon \Corr^\fr(\Smlh_S)\to\Corr^\fr(\mathrm{EssSm}_S)\] 
are fully faithful embeddings of the subcategories
spanned by local strictly henselian essentially smooth schemes over $S$,
and
$\gamma_{\locsh}\colon \Smlh_k\to\Corr^\fr(\Smlh_S)$.
A similar claim holds for $\calSP^\gpfr_\Sigma(\Sm_S)$.
\end{sblemma}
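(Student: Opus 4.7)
The plan is to reduce the asserted equivalence to a Beck--Chevalley identity for a pullback square of categories and then verify the latter via a cofinality argument based on the universal property of strict henselizations.

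First I would record that all six functors in the claim preserve small colimits: the four ``$*$''-functors $\gamma^*$, $\gamma^*_\locsh$, $\essemb^*$, $\essemb_\fr^*$ are left Kan extensions (equivalently, the continuous extensions discussed in the paragraph preceding the sublemma), hence left adjoints, while $g_*$ and $g^\fr_*$ are restrictions along fully faithful embeddings, which preserve colimits since presheaf colimits are computed pointwise. Second, the two composites $\Sm_S\xrightarrow{\gamma}\Corr^\fr(\Sm_S)\xrightarrow{\essemb^\fr}\Corr^\fr(\EssSm_S)$ and $\Sm_S\xrightarrow{\essemb}\EssSm_S\xrightarrow{\gamma_\EssSm}\Corr^\fr(\EssSm_S)$, where $\gamma_\EssSm\colon\EssSm_S\to\Corr^\fr(\EssSm_S)$ is the tautological functor, agree on the nose, so by the uniqueness of left Kan extensions I obtain $\essemb_\fr^*\gamma^*\simeq\gamma^*_\EssSm\essemb^*$. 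Substituting this into the right-hand side of the sublemma reduces the claim to the Beck--Chevalley identity
\[
\gamma^*_\locsh g_*\simeq g^\fr_*\gamma^*_\EssSm\colon\calSP(\EssSm_S)\longrightarrow\calSP(\Corr^\fr(\Smlh_S))
\]
for the remaining square, which is a strict pullback of $1$-categories since $g$ and $g^\fr$ are fully faithful embeddings of the full subcategories spanned by the strictly henselian local essentially smooth schemes and the $\gamma$-functors are identity on objects.

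Applying the coend formula for left Kan extensions on both sides, this Beck--Chevalley identity reduces, for each $V\in\Smlh_S$, to the cofinality of the inclusion of the full subcategory of pairs $(U,\phi)$ with $U\in\Smlh_S$ into the comma category $(V\downarrow\gamma_\EssSm)$, whose objects are pairs $(U\in\EssSm_S,\phi\colon V\to U\in\Corr^\fr(\EssSm_S))$. This is the main obstacle and the crucial geometric step. A framed correspondence $\phi\colon V\to U$ is represented by a span $V\leftarrow Z\to U$ with $Z\to V$ finite syntomic together with framing data; since $V$ is strictly henselian local, the image of $Z$ in $U$ consists of finitely many closed points $u_1,\dots,u_n$, so the map $Z\to U$ factors canonically through the disjoint union of strict henselizations $U':=\bigsqcup_i U^{sh}_{u_i}\in\Smlh_S$. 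The resulting decomposition $\phi=\gamma_\EssSm(\psi)\circ\phi'$ with $\phi'\colon V\to U'\in\Corr^\fr(\Smlh_S)$ and $\psi\colon U'\to U$ an honest morphism in $\EssSm_S$ exhibits an object of the subcategory mapping to $(U,\phi)$; the universal property of the strict henselization then yields contractibility of the slice over $(U,\phi)$, which is the required cofinality.

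The main technical delicacy lies in carrying the framing data (the trivialization of the cotangent complex) along this factorization coherently within the $\infty$-categorical definition of $\Corr^\fr$ from \cite{elmanto2021motivic}. Once this is in place, the claim for $\calSP^\gpfr_\Sigma(\Sm_S)$ follows by the identical argument with $\gamma_\gp$ in place of $\gamma$: group completion and the radditive condition are built from colimits that every functor in the diagram already preserves, so the equivalence descends along the adjunction $\gamma_\gp^*\dashv\gamma_*^\gp$ without further geometric input.
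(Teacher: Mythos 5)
Your proposal takes a genuinely different route from the paper's. The paper's proof is much shorter: it observes that all six functors preserve colimits (the $(-)^*$'s because they are left adjoints, the $(-)_*$'s because they act schemewise), so it suffices to check the identity on representables $h(Y)$; for those it exhibits a functorial "stalk map'' $\Corr^\fr(X,Y)\to\Corr^\fr(X^{sh}_x,Y^{sh}_y)$, whose existence hinges on the finiteness of the support of a framed correspondence together with the fact that a connected scheme finite over a strictly henselian local scheme is again strictly henselian local. Your BC-plus-cofinality scheme is a legitimate formalization and shares the same geometric kernel, but it is a heavier route: you commit to computing the two sides by coends and to proving a finality statement instead of simply producing the equivalence on generators and extending along colimits.

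There are, however, two genuine gaps in your argument as written. First, you assert $U':=\bigsqcup_i U^{sh}_{u_i}\in\Smlh_S$. That is false: $\Smlh_S$ is, by the paper's definition, the subcategory spanned by \emph{local} (hence connected) strictly henselian essentially smooth schemes, and a disjoint union of several such is not one of them; this is precisely why the paper's phrasing carefully says ``any \emph{connected} finite scheme over a local strictly henselian scheme is local strictly henselian.'' To run the factorization through $\Smlh_S$ you must either cut down to the case where the support maps into a single point, or pass to the radditive/$\Sigma$ setting and use the semiadditive structure of $\Corr^\fr$ to split the correspondence as a finite sum of correspondences each factoring through a single $U^{sh}_{u_i}$; neither is done explicitly. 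Second, and more seriously, the sentence ``the universal property of the strict henselization then yields contractibility of the slice over $(U,\phi)$'' is not a proof. Exhibiting \emph{one} object of the slice does not make it weakly contractible, and the object you build is not obviously initial or terminal: the relevant slice consists of factorizations $\phi=\gamma_\EssSm(\mu)\circ\phi'$ through various henselian locals together with commutation data, and the universality of $U^{sh}_{u_i}\to U$ among scheme morphisms does not directly govern comparisons between two factorizations whose ``middle legs'' are framed (not scheme) morphisms. You would need to spell out why any two such factorizations are connected by a zig-zag (or produce a terminal object) for the finality criterion (Quillen's Theorem A, in the $\infty$-categorical form) to apply, and also verify that your comma category is the correct indexing category for the coend on the $\gamma^*_\locsh g_*$ side — the morphisms in $(\gamma^{\mathrm{op}}_\EssSm)_{/V^{\mathrm{op}}}$ run in the opposite sense from those you wrote. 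Until these two points are fixed, the proposal is not a complete proof.
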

\begin{proof}
For any $X,Y\in\Sm_S$, and points $x\in X$, $y\in Y$,
there is a functorial morphism of spaces 
$\Corr^\fr(X,Y)\to \Corr^\fr(X^{sh}_x,Y^{sh}_y)$,
since 
for each $c\in\Corr^\fr(X,Y)$ the support of $c$ is finite over $X$, 
and any connected finite scheme over a local strictly henselian scheme is local strictly henselian.
Hence \eqref{eq:gammausglssimeqglsgammaus} is satisfied on representable objects.
The \eqref{eq:gammausglssimeqglsgammaus} follows because 
$\gamma^*_{\locsh}$, $\gamma^*$,
$\essemb^*$, $\essemb_\fr^*$
preserve colimits being left adjoint, and
$g_*$, $g^\fr_*$ preserve colimits acting schemewise.
\end{proof}

\begin{lemma}\label{lm:Letgammads}
Let $S$ be a scheme.
Let
$\gamma^*\dashv\gamma_*$ denote the adjunctions
\[\calSP(\Sm_S)\leftadjright\calSP^\fr_\Sigma(\Sm_S),\quad \calSP=\Spc,\Spt,\]
see Notation 
\ref{notation:calPfr(calC)}.
There are canonical equivalences
\begin{itemize}
\item[(1)]
$\calL_\et \gamma_*\simeq \gamma_* \calL^\fr_\et$,
\item[(2)]
$\Omega_{S^1} \gamma_*\simeq \gamma_* \Omega^\fr_{S^1}$,
$\Omega_{\Gm} \gamma_*\simeq \gamma_* \Omega^\fr_{\Gm}$,
$\Omega_{\PP^1} \gamma_*\simeq \gamma_* \Omega^\fr_{\PP^1}$,
$\calL_{\Ao} \gamma_*\simeq \gamma_* \calL^\fr_{\Ao}$,
\item[(3)]
$\Sigma^{\infty,\fr}_{S^1}\gamma^*\simeq \gamma^*\Sigma^\infty_{S^1}$,
$\Sigma^{\infty,\fr}_{\Gm}\gamma^*\simeq \gamma^*\Sigma^\infty_{\Gm}$,
$\Sigma^{\infty,\fr}_{\PP^1}\gamma^*\simeq \gamma^*\Sigma^\infty_{\PP^1}$,
$\calL^\fr_{\Ao} \gamma^*\simeq \gamma^* \calL_{\Ao}$,
\item[(4)]
$t_{\geq b} \gamma_*\simeq \gamma_* t_{\geq b}$,
\item[(5)]
$t^\et_{\geq b} \gamma_*\simeq \gamma_* t^\et_{\geq b}$.
\end{itemize}
Similar claims hold for $\calSP^\gpfr_\Sigma(\Sm_S)$.
\end{lemma}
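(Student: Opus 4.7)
\emph{Plan.} The three parts (3) are formal consequences of universal properties of localizations and stabilizations. Since $\gamma^*$ is a symmetric monoidal left adjoint, and the collections of framed $\A^1$-equivalences and framed $\mathcal E$-stable equivalences ($\mathcal E=S^1,\Gm,\PP^1$) are, by the construction recalled in \Cref{sect:Lfret}, generated by $\gamma^*$ applied to the corresponding classes in $\Spc(\Sm_S)$, the universal properties of Bousfield localization and $\mathcal E$-stabilization identify the composites on both sides.

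Parts (2) and (4) follow by passing to right adjoints. Uniqueness of right adjoints converts $\gamma^*\Sigma_{\mathcal E}\simeq\Sigma^{\fr}_{\mathcal E}\gamma^*$ (from the monoidality of $\gamma^*$) into the claimed commutativity $\Omega_{\mathcal E}\gamma_*\simeq\gamma_*\Omega^{\fr}_{\mathcal E}$. For $\calL_{\Ao}\gamma_*\simeq\gamma_*\calL^\fr_{\Ao}$, I would first observe the characterization that $F\in\calSP^\fr(\Sm_S)$ is framed $\A^1$-local iff $\gamma_*F$ is plain $\A^1$-local: this follows from $\Map(\gamma^*G,F)\simeq\Map(G,\gamma_*F)$ together with the fact that framed $\A^1$-equivalences are $\gamma^*$-images of plain ones. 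Hence $\gamma_*\calL^\fr_{\Ao}F$ is $\A^1$-local, and the unit $\gamma_*F\to\gamma_*\calL^\fr_{\Ao}F$ is an $\A^1$-equivalence by the same adjunction using (3) to see $\gamma^*G$ is framed $\A^1$-local whenever $G$ is plain $\A^1$-local. Part (4) is immediate from the schemewise nature of the $t$-structure on $\Spt(\calC)$ combined with $(\gamma_*F)(X)=F(\gamma X)$.

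Part (1) is the heart of the lemma. The same adjunction-based characterization applies to the \'etale topology: $F$ is framed $\et$-local iff $\gamma_*F$ is plain $\et$-local, because $\calL^\fr_\et$ is by construction generated by $\gamma^*$-images of plain $\et$-equivalences. Consequently $\gamma_*\calL^\fr_\et F$ is plain $\et$-local, and it remains to show the unit $\gamma_*F\to\gamma_*\calL^\fr_\et F$ is a plain $\et$-equivalence. Unlike the $\A^1$-case, the analog of ``$\gamma^*G$ is framed $\et$-local for plain $\et$-local $G$'' is not transparent, so the argument is carried out at points. By the hypercomplete descent of \cite{ClausenMathewHypdescetaleKth} (cf.\ \Cref{prop:specseq_for_etalepisheaves:perfect:boundedncomplete}), plain $\et$-equivalences are detected by evaluation at strictly henselian essentially smooth local schemes $U$ over $S$. \Cref{sblm:Lhypetfr} supplies the continuous extension of both framed and plain presheaves to $\EssSm_S$ in a manner compatible with $\gamma^*$, so the evaluations $(\gamma_*F)(U)=F(\gamma U)$ and $(\gamma_*\calL^\fr_\et F)(U)=(\calL^\fr_\et F)(\gamma U)$ are well-defined. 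For strictly henselian $U$, every \'etale hypercover of $U$ admits a section, so the plus-construction computing hypercomplete \'etale sheafification is the identity on values at $U$, in the plain setting; the same argument applies in the framed setting since the generating morphisms of $\calL^\fr_\et$ are $\gamma^*$ of those generating $\calL_\et$. Thus $(\calL^\fr_\et F)(\gamma U)\simeq F(\gamma U)$, which makes the unit an equivalence at strictly henselian points, hence a plain $\et$-equivalence.

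Part (5) combines (1) and (4) via $t^\et_{\geq b}\simeq L_\et t_{\geq b}R_\et$ (Notation~\ref{eq:tetgeqltgeql}), with $\gamma_*$ tautologically commuting with the embedding $R_\et$. The main obstacle is the point-evaluation step in (1): carefully verifying that framed hypercomplete \'etale sheafification preserves values at strictly henselian essentially smooth schemes, which requires both \Cref{sblm:Lhypetfr} (for continuity and compatibility with $\gamma^*$) and the fact that the framed localization is generated by the plain one.
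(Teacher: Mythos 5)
Your treatment of part (1), which is the substance of the lemma, is essentially the same as the paper's: both proofs characterize framed $\et$-local objects via $\gamma_*$, then use that strictly henselian essentially smooth local schemes form a conservative system of points for the hypercomplete \'etale topology, with \Cref{sblm:Lhypetfr} providing the needed compatibility between the framed structure and restriction to those points. Your remark that "the plus-construction is the identity on values at $U$" and that "the same argument applies in the framed setting" is looser than the paper's chain of equivalences $g^\fr_*\essemb^*_\fr\calL^\fr_\et\simeq g^\fr_*\essemb^*_\fr$ deduced from the sublemma, but you have identified the right technical ingredient and the argument is sound. Deriving the $\Omega_{\mathcal E}$-cases of part (2) by passing to right adjoints of $\gamma^*\Sigma_{\mathcal E}\simeq\Sigma^\fr_{\mathcal E}\gamma^*$ is a clean alternative to the paper's limits-plus-$\gamma$-commutes-with-products argument, and your parts (3), (4), (5) agree with the paper.

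There is, however, a genuine gap in your argument for $\calL_{\Ao}\gamma_*\simeq\gamma_*\calL^\fr_{\Ao}$. Your adjunction argument correctly shows that $\gamma_*$ \emph{preserves $\Ao$-local objects} (since $F$ framed $\Ao$-local iff $\gamma_*F$ plain $\Ao$-local, whence $\gamma_*\calL^\fr_{\Ao}F$ is $\Ao$-local), but it does not show that the unit $\gamma_*F\to\gamma_*\calL^\fr_{\Ao}F$ is a plain $\Ao$-equivalence. To test that, one must show $\Map(\gamma_*\calL^\fr_{\Ao}F,G)\simeq\Map(\gamma_*F,G)$ for all $\Ao$-local $G$; but $\gamma_*$ is a \emph{right} adjoint, so $\Map(\gamma_*(-),G)$ does not simplify through $\gamma^*\dashv\gamma_*$, and the observation that $\gamma^*G$ is framed $\Ao$-local does not bear on it. (Preserving local objects and preserving equivalences are genuinely independent properties for a functor that is only a right adjoint; this is also why your part (1) needed the separate point-evaluation step, as you yourself observe.) The paper closes this gap by a different route: $\calL_{\Ao}$ is computed as a sifted colimit over $\Delta^\bullet$, $\gamma_*$ preserves sifted colimits on spaces (all small colimits on spectra, by \cite[Prop.\ 3.2.15, 3.5.2]{elmanto2021motivic}), and $\gamma$ commutes with $-\times_S\Delta^n_S$, $-\times\bbG_m^{\times n}$, $-\times\PP^{\times n}$; hence $\gamma_*$ commutes with the explicit formula for $\calL_{\Ao}$. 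You would need to incorporate this colimit-preservation argument (or a Beck--Chevalley-type argument) to make your part (2) complete.
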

\begin{proof}
(1). 
%
%
Since the schemes $X^{sh}_x$ are the points 
for \'etale topology on $\Sm_S$,
\[g_*\essemb^*\calL_\et\simeq g_*\essemb^*, \quad g^\fr_*\essemb^*_\fr\calL^\fr_\et\simeq g^\fr_*\essemb^*_\fr,\]
where
the right equivalence follows by \Cref{sblm:Lhypetfr}.
Consequently,
\[
g_*\essemb^* \calL_\et\gamma_*\simeq 
g_*\essemb^*\gamma_*\simeq 
\gamma_* g^\fr_*\essemb^*_\fr\simeq 
\gamma_*g^\fr_*\essemb^*_\fr \calL^\fr_\et\simeq
g_*\essemb^*\gamma_* \calL^\fr_\et
.\]
Since the schemes $X^{sh}_x$ form enough set of points,
$g_*$ is conservative with respect to $\et$-equivalences.
Then since both $\gamma_*\calL^\fr_\et F$ and $\calL_\et \gamma_*F$ are 
hypercomplete \'etale sheaves over any $F\in\Spcfr(\Sm_S)$,
the claim follows.
%
(2). 
The claim on $\Omega_{S^1}$ 
holds because the functor acts schemewise.
Recall that 
$\gamma_*$ preserves limits and sifted colimits by \cite[Proposition 3.2.15]{elmanto2021motivic} on the level of spaces and 
all small colimits on the level spectra by \cite[3.5.2]{elmanto2021motivic}.
Then the claims on $\Omega_{\Gm}$ and $\calL_{\Ao}$ follow 
because $\gamma$ commutes with $-\times_S\Delta^n_{S}$ and $-\times\bbG_m^{\times n}$, $-\times\PP^{\times n}$, $l\in\bbZ$,
see \cite[page 33, Proposition 3.2.14]{elmanto2021motivic} for $\calL_{\Ao}$.
(3)
follows from (2),
and since $\gamma^*$ preserves
morphisms $\Ao\times X\to X$,
$\gamma^*$ preserves $\Ao$-equivalences,
i.e.
$\calL^\fr_{\Ao} \gamma^*\simeq \gamma^* \calL_{\Ao}$.
(4) 
follows from the definition because $\gamma$ is equivalence on objects. 
(6) follows by (1) and (4). 
\end{proof}

\begin{lemma}\label{lm:gammauslsAeteq}
The functors $\gamma_*$ in \Cref{lm:Letgammads}
are conservative with respect to 
equivalences, $\Ao$-equivalences, and $\et$-equivalences.
The functors $\gamma^*$ 
preserve 
$\Ao$-equivalences, and $\et$-equivalences.
\end{lemma}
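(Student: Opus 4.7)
The plan is to deduce all four assertions from the adjunction $\gamma^{*}\dashv\gamma_{*}$ together with the commutation identities already established in \Cref{lm:Letgammads}.

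First I would handle conservativity of $\gamma_{*}$ with respect to pointwise equivalences. Since $\gamma\colon\Sm_{S}\to\Corr^{\fr}_{S}$ is the identity on objects, the functor $\gamma_{*}$ is simply restriction of presheaves, i.e.\ $(\gamma_{*}F)(X)=F(X)$ for every $X\in\Sm_{S}$. Hence if $\gamma_{*}F\to\gamma_{*}G$ is a pointwise equivalence, so is $F\to G$ after restriction to $\Sm_{S}$; but the representables $h^{\fr}(X)$ generate $\calSP^{\fr}_{\Sigma}(\Sm_{S})$ under sifted colimits, and both $F,G$ are radditive, so this forces $F\to G$ to be an equivalence in $\calSP^{\fr}_{\Sigma}(\Sm_{S})$.

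For the $\Ao$- and $\et$-conservativity of $\gamma_{*}$ I would reduce to the previous case using the commutation identities $\calL_{\Ao}\gamma_{*}\simeq\gamma_{*}\calL^{\fr}_{\Ao}$ and $\calL_{\et}\gamma_{*}\simeq\gamma_{*}\calL^{\fr}_{\et}$ from parts (1)--(2) of \Cref{lm:Letgammads}. Indeed, if $f\colon F\to G$ satisfies that $\gamma_{*}f$ is an $\tau$-equivalence (for $\tau=\Ao$ or $\tau=\et$), then $\calL_{\tau}\gamma_{*}f$ is an equivalence, hence so is $\gamma_{*}\calL^{\fr}_{\tau}f$; by conservativity of $\gamma_{*}$ on equivalences this gives that $\calL^{\fr}_{\tau}f$ is an equivalence, i.e.\ $f$ is an $\tau$-equivalence in $\calSP^{\fr}_{\Sigma}(\Sm_{S})$.

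Preservation of $\Ao$-equivalences by $\gamma^{*}$ follows directly from part (3) of \Cref{lm:Letgammads}: if $f$ is $\Ao$-equivalence, then $\calL_{\Ao}f$ is an equivalence, so $\gamma^{*}\calL_{\Ao}f\simeq\calL^{\fr}_{\Ao}\gamma^{*}f$ is an equivalence, which is exactly the statement that $\gamma^{*}f$ is an $\Ao$-equivalence.

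The only step requiring a little care is preservation of $\et$-equivalences by $\gamma^{*}$, since the commutation in (1) is stated for $\gamma_{*}$ rather than $\gamma^{*}$. The plan is to pass through the adjunction. Given an $\et$-local object $H\in\calSP^{\fr}_{\Sigma}(\Sm_{S})$, apply \Cref{lm:Letgammads}(1) to obtain
\[
\calL_{\et}\gamma_{*}H\simeq \gamma_{*}\calL^{\fr}_{\et}H\simeq \gamma_{*}H,
\]
so $\gamma_{*}H$ is $\et$-local in $\calSP(\Sm_{S})$. Then for any $\et$-equivalence $f\colon F\to G$ and any $\et$-local $H$ in $\calSP^{\fr}_{\Sigma}(\Sm_{S})$, adjunction gives
\[
\Map^{\fr}(\gamma^{*}G,H)\simeq\Map(G,\gamma_{*}H)\xrightarrow{\simeq}\Map(F,\gamma_{*}H)\simeq\Map^{\fr}(\gamma^{*}F,H),
\]
where the middle map is an equivalence precisely because $\gamma_{*}H$ is $\et$-local. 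Hence $\gamma^{*}f$ is an $\et$-equivalence. The main (mild) obstacle is justifying that $\gamma_{*}H$ is $\et$-local, but this is handled cleanly by \Cref{lm:Letgammads}(1), which is why that identity was proved first.
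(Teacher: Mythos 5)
Your proof is correct and follows essentially the same route as the paper: conservativity of $\gamma_*$ comes from $\gamma$ being the identity on objects (so $\gamma_*$ is restriction), the $\Ao$- and $\et$-cases reduce to this via the commutation identities from \Cref{lm:Letgammads}, and preservation of $\et$-equivalences by $\gamma^*$ is obtained by showing $\gamma_*$ preserves $\et$-local objects and passing through the adjunction — exactly as in the paper. Two small remarks: in your first step the digression through radditivity and sifted colimits is unnecessary, since a map of presheaves is an equivalence iff it is so objectwise, which is immediate once you note $\gamma_*$ is restriction along an object-bijection; and for preservation of $\Ao$-equivalences you invoke the identity $\calL^\fr_{\Ao}\gamma^*\simeq\gamma^*\calL_{\Ao}$ from \Cref{lm:Letgammads}(3), whereas the paper re-runs the underlying generator argument (that $\gamma^*$ sends $h(\Ao\times X)\to h(X)$ to the corresponding framed projection) — these are the same argument at one remove, since that identity in \Cref{lm:Letgammads}(3) was itself established that way.
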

\begin{proof}
The claims on $\gamma_*$
follow
because the functor $\gamma$ induces the isomorphism on objects.
The first claim on $\gamma^*$
follows because $\gamma(h(\Ao\times_k X)\to h(X))=\gamma(h(\Ao\times_k X))\to \gamma(h(X))$.
The second claim on $\gamma^*$
follows because $\gamma_*$ preserves $\et$-local objects, i.e., \'etale hypersheaves, by \Cref{lm:Letgammads}(2).
\end{proof}


\begin{remark}
    The functor $\gamma$ decomposes as 
    \[\Sm_k\xrightarrow{\gamma^{\Fr_+}}\Fr_+(\Sm_k)\to\Corr^\fr(\Sm_k).\]
    Claims of \Cref{lm:Letgammads,lm:gammauslsAeteq} 
    hold for $\gamma^{\Fr_+}$ as well by the same arguments as for $\gamma$.
\end{remark}



\section{Algebraic and purely inseparable extensions of base fields}\label{sect:PurelyinsepextensionsASigma}

     For a field extension $K/k$,
denote 
\begin{equation*}\begin{array}{lcll}
e_{\Sch}&\colon&\Sch_k\to \Sch_K; &X\mapsto X\times_k\Spec K,\\
e&\colon&\Sm_k\to \Sm_K; &X\mapsto X\times_k\Spec K,
\end{array}
\end{equation*}
and we use the same notation for $\Corr^\fr(-)$.
Then there are the adjunctions
\[
e^*_\Sch\dashv e^\Sch_*,\quad 
e^*\dashv e_*
\]
with respect to
$\calSP^*_\star(-)$,
where
$\calSP^*_\star(-)$ is $\calSP(-)$ or $\calSP^\fr_{\Sigma}(-)$
for a presentable
$\infty$-category
$\calSP$.


\begin{lemma}\label{lm:commute:els:Let_AND_:LA_AND_OmegaGm}
    For any algebraic field extension $K/k$,\begin{itemize}
    \item[(1)] $\calL_{\cet}e^\Sch_*\simeq e^\Sch_*\calL_\cet$, $\calL_{\cet}e_*\simeq e_*\calL_\cet$, 
    \item[(2)] $\calL_{\Ao}e^\Sch_*\simeq e^\Sch_*\calL_{\Ao}$, $\calL_{\Ao}e_*\simeq e_*\calL_{\Ao}$,\\
               $\Omega_{\Gm}e^\Sch_*\simeq e^\Sch_*\Omega_{\Gm}$, $\Omega_{\Gm}e_*\simeq e_*\Omega_{\Gm}$,
    \end{itemize}
\end{lemma}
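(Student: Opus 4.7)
The strategy is to observe that the functor $e\colon \Sm_k \to \Sm_K$ (respectively $e^\Sch\colon \Sch_k \to \Sch_K$) is \emph{continuous} for the three structures in question: it sends étale covers to étale covers, $\A^1$-projections to $\A^1$-projections, and satisfies $e(\Gm_k \times X) = \Gm_K \times e(X)$. From such continuity, the asserted commutativities follow by a standard adjunction argument, which I outline below; the proofs of the $\Sch$ and $\Sm$ versions are parallel, so I write only the smooth case.

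For part (1), the plan is to show that the left adjoint $e^*$ preserves hypercomplete étale-local equivalences; then by the universal property of the reflective localization $\calL_\et$, the right adjoint $e_*$ preserves étale hypersheaves and satisfies $\calL_\et e_* \simeq e_* \calL_\et$. Concretely, $e^* F(X) = F(X \times_k K)$, and for any étale cover $\{V_i \to X\}$ in $\Sm_k$ the base changes $\{V_{i,K} \to X_K\}$ form an étale cover in $\Sm_K$, so $e^*$ preserves descent for étale covers. Hypercompleteness is preserved by the same argument applied levelwise to hypercovers, noting that the base change of an étale hypercover along $X_K \to X$ remains an étale hypercover. Hence $e^* F$ is a hypercomplete étale sheaf whenever $F$ is, which by adjunction gives the claim. (If one prefers, writing $K = \colim_\alpha L_\alpha$ as a filtered colimit of finite subextensions reduces to the case of finite $L_\alpha/k$, where the base change is a composite of a finite étale cover with a universal homeomorphism, and both preserve the étale site evidently.)

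For part (2), the analogous observations are that for $F\in \calSP(\Sm_K)$ which is $\A^1_K$-invariant,
\[
(e^* F)(\A^1_k\times X) \;=\; F\bigl((\A^1_k\times X)\times_k K\bigr) \;=\; F(\A^1_K\times X_K) \;\simeq\; F(X_K) \;=\; (e^* F)(X),
\]
so $e^*$ preserves $\A^1$-invariant presheaves, hence $\A^1$-local equivalences; by the same adjunction argument $\calL_{\A^1} e_* \simeq e_* \calL_{\A^1}$. For $\Omega_{\Gm}$, both $\Omega_{\Gm}$ and $e_*$ are right adjoints, so it suffices that the corresponding left adjoints commute; equivalently, $e^*$ preserves $\Gm$-equivalences, which follows in the same manner from the identity $e(\Gm_k\times X)= \Gm_K\times e(X)$. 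Alternatively one may compute directly via
\[
(\Omega_{\Gm} e_* F)(Y) \;=\; \fib\bigl((e_* F)(\Gm_K\times Y)\to (e_*F)(Y)\bigr),
\]
which equals $(e_* \Omega_{\Gm} F)(Y)$ since $e_*$ commutes with the fiber (being a right adjoint) and the indexing pro-object is unchanged.

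The only potential technical point is ensuring that \emph{hypercomplete} descent, as opposed to merely Čech descent, is preserved by $e^*$; this is the step I expect to require the most care. It is handled by observing that the base change functor $X \mapsto X \times_k K$ sends étale hypercovers to étale hypercovers (truncation-by-truncation), so $e^*$ of a hypercomplete sheaf satisfies descent against all hypercovers, as needed. With that in hand, all four asserted equivalences follow uniformly from the continuity of $e$ and $e^\Sch$ together with the adjunction $e^*\dashv e_*$ (resp.\ $e^*_\Sch\dashv e^\Sch_*$).
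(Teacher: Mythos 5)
There is a genuine gap in your argument for part (1). You show that base change preserves \'etale (hyper)covers and then appeal to an ``adjunction argument'' to conclude \emph{both} that the direct image preserves \'etale hypersheaves \emph{and} that $\calL_{\cet}$ commutes with it. Only the first of these follows from preservation of covers: continuity of $e$ (together with the adjunction) shows that the schemewise precomposition functor $e_*$, $(e_*G)(X)=G(X\times_k K)$, carries local objects to local objects. The equivalence $\calL_{\cet}e_*\simeq e_*\calL_{\cet}$ is strictly stronger — it requires $e_*$ to carry \'etale\emph{-local equivalences} to \'etale-local equivalences, i.e., to be compatible with sheafification, not merely to preserve the class of sheaves. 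The paper supplies the missing ingredient by also observing that $e$ preserves strictly henselian essentially smooth local schemes (the points of the \'etale topology), so the stalk of $e_*G$ at such a $U$ is $G(U\times_k K)$, a (cofiltered limit of finite disjoint unions of) stalk(s) of $G$, and hence is unchanged by $\calL_\cet$. Your proof does not address this step, and the universal property of the reflective localization that you cite does not deliver it.

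There is also a variance/labeling problem that muddies the write-up: you describe the schemewise functor $F\mapsto F((-)\times_k K)$ as ``the left adjoint $e^*$'', whereas in the paper's conventions this precomposition is $e_*$ (the right adjoint; it acts schemewise and therefore preserves all limits and colimits), while $e^*$ is a left Kan extension and is \emph{not} schemewise. For part (2), the direct fiber computation for $\Omega_{\Gm}$ is correct. For $\calL_{\Ao}$ the same structural gap recurs — preservation of $\Ao$-invariant presheaves does not by itself give commutativity with $\calL_{\Ao}$ — but there it is easily repaired, e.g.\ by noting that $e_*$ preserves colimits and sends the generating $\Ao_K$-projections to $\Ao_k$-bundle projections, or (as the paper does) that $e$ commutes with $-\times\Delta^\bullet$ and $e_*$ is schemewise so that $e_*$ intertwines the $\mathrm{Sing}^{\Ao}$-construction. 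As written, though, the appeal to ``the same adjunction argument'' for this step is not valid.
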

\begin{proof}
    The claim (1) 
    follows because
    the functors $e$ and $e_\Sch$
    preserve 
    \'etale coverings and
    strictly henselian essentially smooth local schemes.

    The claim (2) 
    follows because 
    the functors $e$ and $e_\Sch$
    commute with the endofunctors $-\times_{(-)}\Delta^n_{(-)}$, 
    and $-\times\bbG_m^{\times n}$, $l\in\bbZ$,
    while the functor $e_*$ preserves colimits and limits because it acts schemewise.
\end{proof}

\begin{lemma}\label{lm:pdgreeextensionlinfting}
Let $K=k(\alpha)$ be a prime extension defined by a polynomial $p\in k[t]$ of degree $d$.
(1)
There is a natural splitting
\begin{equation}\label{eq:spliitingprimeKkSchF}
e_*^\Sch e_\Sch^*(F)\simeq F^\prime\oplus F,\quad F\in\Spt^\HZgpfr_{\Sigma,\Ao}(\Sch_k)[1/d_\varepsilon],
\end{equation}
with respect to the adjunction
\[e^*_\Sch\colon\Spc^\HZgpfr_{\Sigma,\Ao}(\Sch_k)\leftadjright\Spc^\HZgpfr_{\Sigma,\Ao}(\Sch_K)\colon e_*^\Sch,\]
see Notation \ref{notation:SptHZgpfr}.

(2)
Suppose in addition that $K/k$ is purely inseparable.
Then \eqref{eq:spliitingprimeKkSchF}
is an equivalence.

\end{lemma}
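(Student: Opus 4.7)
The plan is to build a natural transfer/trace map $\tau_F \colon e^\Sch_* e^*_\Sch F \to F$ and show that its composition with the unit $\eta_F \colon F \to e^\Sch_* e^*_\Sch F$ is multiplication by $d_\varepsilon$, the quadratic degree of the finite flat morphism $\Spec K \to \Spec k$. After inverting $d_\varepsilon$ this immediately produces the desired splitting, and the purely inseparable case will be handled by a separate argument that upgrades $\eta_F$ itself to an equivalence.

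In detail, for any $X\in\Sch_k$ the projection $\pi_X\colon X\times_k K\to X$ is finite locally free of constant rank $d$, and by the primitive-element description $K=k(\alpha)$ together with the defining polynomial $p(t)$ it carries a canonical framing coming from the Jacobian of $p$. Through the framed enhancement of finite syntomic morphisms developed in \cite{elmanto2021motivic}, this framing gives a natural framed correspondence $X\to X\times_k K$ in $\Corr^\fr_k$, and its composition with the structure map $X\times_k K\to X$ equals $\langle d\rangle_\varepsilon$ in the quadratic degree, i.e.\ $d_\varepsilon$ once one passes to the $\HZfr$-enrichment. Assembling these transfers into a natural transformation $\tau_F\colon e^\Sch_*e^*_\Sch F\to F$ over all $X\in\Sch_k$ gives a map with $\tau_F\circ\eta_F=d_\varepsilon\cdot\id_F$. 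Inverting $d_\varepsilon$ turns $\eta_F$ into a split monomorphism in $\Spt^\HZgpfr_{\Sigma,\Ao}(\Sch_k)[1/d_\varepsilon]$, so setting $F':=\fib(\tau_F)$ (equivalently, the cofiber of the idempotent $d_\varepsilon^{-1}\eta_F\tau_F$) yields the decomposition \eqref{eq:spliitingprimeKkSchF}.

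For part (2), when $K/k$ is purely inseparable of degree $d=p^n$, one needs to show that $\eta_F$ is already an equivalence (so $F'\simeq 0$). The idea is that the base change along a purely inseparable extension is an equivalence on framed correspondences: the finite universal homeomorphism $\Spec K\to\Spec k$ factors through iterated relative Frobenii, and each relative Frobenius admits a framed inverse-up-to-$p$ coming from the standard framing of the Frobenius twist, as in the Elmanto--Khan argument of \cite[Theorem 2.1.1]{Elmanto2018PerfectionIM}. Consequently $e_\Sch$ becomes an equivalence of $\HZfr$-enriched categories after localization, and hence $\eta_F$ is an equivalence on $\Spt^\HZgpfr_{\Sigma,\Ao}(\Sch_k)$ for all $F$. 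The main obstacle I anticipate is the coherent construction of the transfer as a natural transformation (not just a map for each $X$), and the verification that $\tau_F\circ\eta_F$ really realizes the class $d_\varepsilon$ in $\mathrm{GW}(k)$ acting through the $\HZgpfr$-structure; once this compatibility is established, both parts follow formally.
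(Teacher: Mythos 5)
Your part (1) matches the paper's approach: you build the same framed transfer (the paper's element $f\in h^\fr(\Spec K)(\Spec k)$ given by the span through $Z(p)$ with the framing cut out by $p$), use \cite[Proposition B.1.4]{elmanto2021motivic} to identify $\tau_F\circ\eta_F$ with $d_\varepsilon$, and split the idempotent after inverting $d_\varepsilon$. The naturality concern you flag is handled in the paper by choosing $f$ once and for all as an element of the $\Corr^\gpfr$-mapping space and letting it act via the enriched Yoneda; nothing needs to be checked coherently.

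Part (2), however, has a genuine gap. You invoke the Elmanto--Khan equivalence \cite[Theorem 2.1.1]{Elmanto2018PerfectionIM}, but that result produces an equivalence of \emph{motivically localized} categories $\PShfrgpAnis(\Sm_k)[1/p]\simeq\PShfrgpAnis(\Sm_{k^\mathrm{perf}})[1/p]$; its proof essentially uses that the relative Frobenius is a universal homeomorphism, hence a Nisnevich-local equivalence, which only becomes usable after sheafification. The category in the lemma is $\Spt^{\HZgpfr}_{\Sigma,\Ao}(\Sch_k)$ --- radditive, $\Ao$-invariant, but with no topology imposed --- and the paper's introduction explicitly observes (around \eqref{cor:th:pinsepclosure:gpfr:Equalivalenceinvchark}) that the Elmanto--Khan result ``implies only a motivic equivalence and not a schemewise one'' and is therefore not applicable to claims at this level; producing a schemewise statement is precisely the point of Section~\ref{sect:PurelyinsepextensionsASigma}. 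The paper's actual argument avoids this: it applies the splitting of part (1) to the representable $F=\hZgpfr(X_K)$, base-changes to $K$, observes that $K\otimes_k K\cong K[s]/(s^p)$ is $\Ao$-contractible onto $\Spec K$ (via $s\mapsto ts$), so $\hZgpfr(X_K)'\simeq 0$, and then applies (1) once more to $F=\hZgpfr(X)'$ to bootstrap $\hZgpfr(X)'\simeq 0$. You would either need to reproduce that bootstrap or supply a genuinely presheaf-level version of the Frobenius-inversion argument; as written, the appeal to Elmanto--Khan does not cover the required generality.
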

\begin{proof}
(1)
%
Consider the element $f\in h^\fr(\Spec K)(\Spec k)$,
given by 
the
span
\[\xymatrix{
&Z(p)\ar[ld]\ar[rd]^{\simeq}&\\
\Spec k&&\Spec K
}
\]
and
trivialisation of $N_{Z(p)/\Ao_k}$
defined by $p$.
Let $X\in \Sch_k$, 
$    u\colon X\times_k\Spec K\to X $
be the projection.
The element defined by $f$
in
\[\pi_0\Map(\hZgpfr(X),\hZgpfr(X\times_k\Spec K))\cong \pi_0(\hZgpfr(X\times_k\Spec K)(X))\]
is denoted in what follows by $f$ as well.
So we have a morphism 
$f\colon X\to X\times_k\Spec K$
in the  homotopy category
of $\Spc^\Zgpfr_{\Sigma,\Ao}(\Sch_k)$.
By 
\cite[Proposition B.1.4]{elmanto2021motivic}
there is the equality of the classes  
\begin{equation}\label{eq:ucircfdeps}
u\circ f=d_\varepsilon\in \pi_0\Map(\hZgpfr(X),\hZgpfr(X\times_k\Spec K)).
\end{equation}
Hence $f$ induces splitting \eqref{eq:spliitingprimeKkSchF}. 



(2)
Applying \eqref{eq:spliitingprimeKkSchF} to $F=\hZgpfr(X_K)$, we get
    \begin{equation}\label{eq:splittingXKX}
        \hZgpfr(X_K)\simeq \hZgpfr(X)^\prime\oplus \hZgpfr(X). 
    \end{equation}
    Since \[X_{K\times K}=X_K\times_k\Spec K=X_K\times_k\Spec K[t]/(p)\cong X_K\times_k\Spec K[s]/(s^p)\simeq_{\Ao}X_K,\]
    the splitting \eqref{eq:splittingXKX}
    goes to an equivalence along the base change from $k$ to $K$.
    So $\hZgpfr(X_K)^\prime\simeq 0$. On the other hand, by \eqref{eq:spliitingprimeKkSchF} applied
    to $F=\hZgpfr(X)^\prime$, we get
    \begin{equation*}
        \hZgpfr(X_K)^\prime\simeq \hZgpfr(X)^{\prime\prime}\oplus \hZgpfr(X)^\prime 
    \end{equation*}
    Hence $\hZgpfr(X)^\prime\simeq 0$.
\end{proof}
%


Define 
$p_\varepsilon=\sum_{i=0}^{p-1}\langle(-1)^{i}\rangle\in\ZF_1(\mathrm{pt}_k,\mathrm{pt}_k)$,
where 
$\langle\lambda\rangle$ for $\lambda\in k^\times$,
is given by
\[(0,\Ao_k,\lambda t,\Ao_k\to\Spec k)\in\Fr_1(\mathrm{pt}_k,\mathrm{pt}_k).\]

\begin{lemma}\label{lm:addp_invertingofpimpliesinveringofpesilon}
For any field $k$ of exponential characteristic $p$, 
there is the canonical isomorphism
of rings
$R[1/p_\varepsilon]\simeq R[1/p]$,
where \[R = \ovZF_*(\mathrm{pt}_k,\mathrm{pt}_k).\]
\end{lemma}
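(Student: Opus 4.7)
The low-characteristic cases are immediate: in characteristic $0$, $p = 1$ and $p_\varepsilon = \langle 1\rangle = 1 = p$; in characteristic $2$, $\langle -1\rangle = \langle 1\rangle$ forces $p_\varepsilon = 2 = p$ in $R$. So from now on I assume $p$ is an odd prime. Separating even and odd indices in $p_\varepsilon = \sum_{i=0}^{p-1}\langle(-1)^i\rangle$ gives $p_\varepsilon = \tfrac{p+1}{2}\langle 1\rangle + \tfrac{p-1}{2}\langle -1\rangle$, and I will set
\[
u := \tfrac{p+1}{2}\langle 1\rangle - \tfrac{p-1}{2}\langle -1\rangle.
\]
Using the composition law $\langle\lambda\rangle\langle\mu\rangle = \langle\lambda\mu\rangle$ (in particular $\langle -1\rangle^2 = \langle 1\rangle = 1$), a direct computation gives
\[
p_\varepsilon\cdot u = \bigl(\tfrac{p+1}{2}\bigr)^2 - \bigl(\tfrac{p-1}{2}\bigr)^2 = p.
\]
Hence $p_\varepsilon$ is a unit in $R[1/p]$ with inverse $u/p$, yielding by the universal property a canonical ring map $R[1/p_\varepsilon]\to R[1/p]$.

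For the reverse direction I plan to show that the difference $\nu := p - p_\varepsilon = -\tfrac{p-1}{2}\tau$ (with $\tau := 1 - \langle -1\rangle$) is nilpotent in $R$. The identity $\langle -1\rangle^2 = 1$ gives $\tau^2 = 1 - 2\langle -1\rangle + 1 = 2\tau$ at once, so it suffices to prove the sharper relation $2\tau = 0$ in $R$; granting that, $\tau^2 = 0$ and $\nu^2 = \bigl(\tfrac{p-1}{2}\bigr)^2\tau^2 = 0$. Then in $R[1/p_\varepsilon]$ the factorisation $p = p_\varepsilon(1 + p_\varepsilon^{-1}\nu)$ exhibits $p$ as a unit, being a unit times $1+(\text{nilpotent})$. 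This produces the inverse map $R[1/p]\to R[1/p_\varepsilon]$; the two maps are mutually inverse because each restricts to the identity on $R$.

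The remaining step, and the principal subtlety of the plan, is the verification that $2\tau = 0$ in $R$. My plan is to exploit the standard framed-motives dictionary identifying $R$, in the grading where $p_\varepsilon$ and $p$ live, with the Grothendieck--Witt ring $GW(k)$; the ring homomorphism $GW(\mathbb{F}_p)\to GW(k)$ induced by $\mathbb{F}_p\hookrightarrow k$ then reduces the assertion to $2\tau_0 = 0$ in $GW(\mathbb{F}_p)$, where $\tau_0 := 1 - \langle -1\rangle$. This is elementary: if $p\equiv 1\pmod{4}$ then $-1$ is already a square in $\mathbb{F}_p$ and $\tau_0 = 0$; if $p\equiv 3\pmod{4}$ then the Witt ring $W(\mathbb{F}_p)\cong\mathbb{Z}/4$ with $\langle -1\rangle\mapsto -1$, so $\tau_0\mapsto 2$ and $2\tau_0\mapsto 4 = 0$ in $W(\mathbb{F}_p)$; since $2\tau_0$ has rank zero and vanishes in $W(\mathbb{F}_p)$, the standard injection $GW(\mathbb{F}_p)\hookrightarrow\mathbb{Z}\oplus W(\mathbb{F}_p)$ given by rank and Witt class forces $2\tau_0 = 0$ in $GW(\mathbb{F}_p)$.
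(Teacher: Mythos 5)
Your proof is correct, and it departs from the paper in an interesting and arguably more robust way. Where you and the paper agree: the factorization $p_\varepsilon\cdot u = p$, with $u = \tfrac{p+1}{2}\langle 1\rangle - \tfrac{p-1}{2}\langle -1\rangle$, is a cleaner one-shot version of what the paper accomplishes by passing to $R/(2_\varepsilon)$ (where $p_\varepsilon = 1$) and $R[1/2_\varepsilon]$ (where $p_\varepsilon 2_\varepsilon = (2p)_\varepsilon = p\cdot 2_\varepsilon$) and gluing; both establish that $p_\varepsilon$ becomes a unit in $R[1/p]$. Where you differ materially is on the converse direction. The paper asserts the identity $p_\varepsilon\,p_\varepsilon = p\,p_\varepsilon$, but using $(d_1 d_2)_\varepsilon = (d_1)_\varepsilon(d_2)_\varepsilon$ and $\langle -1\rangle^2 = \langle 1\rangle$ one computes
\[
p_\varepsilon^2 - p\,p_\varepsilon \;=\; (p^2)_\varepsilon - p\,p_\varepsilon \;=\; \tfrac{p-1}{2}\bigl(\langle -1\rangle - \langle 1\rangle\bigr),
\]
and for $p\equiv 3\pmod 4$ (where $\tfrac{p-1}{2}$ is odd and $2(\langle -1\rangle - \langle 1\rangle)=0$), this would force $\langle -1\rangle=\langle 1\rangle$, which fails already over $k=\mathbb F_p$. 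Your route avoids this issue entirely: you only need $\nu = p - p_\varepsilon = \tfrac{p-1}{2}(1 - \langle -1\rangle)$ (note a sign slip in your text — $\nu = +\tfrac{p-1}{2}\tau$, not $-\tfrac{p-1}{2}\tau$, though only $\nu^2$ matters) to be square-zero, which follows from the weaker and genuinely true relation $2\tau = 0$ in $R$, since $\tau^2 = 2\tau$. This makes your argument strictly correct exactly where the paper's is questionable.

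One point to tighten: you invoke "the standard framed-motives dictionary identifying $R$ \ldots with $GW(k)$" and then use the transfer ring map $\mathrm{GW}(\mathbb F_p)\to\mathrm{GW}(k)$. That identification (essentially a theorem of Neshitov, recovering Morel's $\pi^{\A^1}_{0,0}(\mathbbm{1}_k)\cong\mathrm{GW}(k)$ through framed correspondences) is real but deserves an explicit citation, and one should be careful about what the $*$ in $\overline{\bbZ\mathrm{F}}_*$ denotes (the element $p_\varepsilon$ lives in level $1$, $p$ in level $0$, so strictly speaking one is working in the stabilized ring). Alternatively, you can sidestep $\mathrm{GW}(k)$ completely: since $p>2$, the equation $a^2+b^2=-1$ is solvable in $\mathbb F_p\subset k$, giving an isometry $\langle 1,1\rangle\cong\langle -1,-1\rangle$ over $k$ and hence $2\langle 1\rangle = 2\langle -1\rangle$, i.e. $2\tau = 0$, directly in $R = \overline{\bbZ\mathrm{F}}_*(\mathrm{pt}_k,\mathrm{pt}_k)$. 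Either way your conclusion holds, and your proof is both a valid alternative and an effective repair of the gap in the paper's own last step.
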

\begin{proof}
Let $p=2$, then $p_\varepsilon=2_\varepsilon=2=p$. 

Let $p$ be odd.
Then 
the class of $p_\varepsilon$ in 
$R/(2_\varepsilon)$
equals $1$, and
\[(R/(2_\varepsilon))[1/p, 1/p_\varepsilon]\simeq (R/(2_\varepsilon))[1/p].\]
Since 
$(d_1 d_2)_\varepsilon=(d_1)_\varepsilon (d_2)_\varepsilon$, for any $d_1,d_2\in\bbZ_{\geq 0}$,
we have the equality
$p_\varepsilon 2_\varepsilon =  p 2_\varepsilon$ in $R$.
Hence
\[R[1/2_\varepsilon][1/p, 1/p_\varepsilon]\simeq R[1/2_\varepsilon][1/p].\]
Thus
$R[1/d,1/d_\varepsilon]\simeq R[1/d].$
Conversely,
\[R[1/p_\varepsilon,1/p]\simeq R[1/p_\varepsilon]\]
because
$p_\varepsilon p_\varepsilon=p p_\varepsilon$.
%
%
%
\end{proof}


\begin{proposition}\label{prop:pinsepextSchchk}
Let 
$K/k$
be a purely inseparable extension of  exponential characteristic $p$.
Denote by $p^\prime$ any one of 
$p$ or $p_{\varepsilon}$.

(1)
The adjunction 
\begin{equation}\label{eq:adj:PShfrASchinvchk}
\PShfrgpAS(\Sch_k)[1/p^\prime]\leftadjright \PShfrgpAS(\Sch_K)[1/p^\prime], 
\end{equation}
is an equivalence.

(2)
The adjunction
\begin{equation}\label{eq:adj:PShfrASminvchk}
\PShfrgpAS(\Sm_k)[1/p^\prime]\leftadjright \PShfrgpAS(\Sm_K)[1/p^\prime], 
\end{equation}
is
coreflective.
The functor 
\begin{equation}\label{eq:PShfrgpSSmKSmk}
e_*\colon\PShfrgpS(\Sm_K)[1/p^\prime]\to\PShfrgpS(\Sm_k)[1/p^\prime]
\end{equation}
is essentially surjective on $\Ao$-invariant objects.

The same results hold for 
$\Ab^\fr_{\Sigma}(-)$.
\end{proposition}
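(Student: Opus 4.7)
My plan is to reduce both parts to a prime-degree purely inseparable extension, apply Lemma~\ref{lm:pdgreeextensionlinfting}(2) to obtain the unit equivalence, and then handle the remaining essential surjectivity using the triangle identity together with a nil-invariance property of $\PShfrgpAS$ after inverting $p^\prime$.

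First I would write any purely inseparable extension $K/k$ as a filtered colimit $K=\colim_\alpha K_\alpha$ of finite subextensions, each decomposing as a tower $k=K^{(0)}\subset\cdots\subset K^{(n_\alpha)}=K_\alpha$ of degree-$p$ extensions. Since $\PShfrgpAS(\Sch_?)$ behaves continuously in the base along filtered colimits and the adjoint pair $e_\Sch^*\dashv e_*^\Sch$ composes along towers, it suffices to treat a single degree-$p$ purely inseparable extension $K/k$. In that setting, Lemma~\ref{lm:pdgreeextensionlinfting}(2) provides the unit equivalence $F\xrightarrow{\simeq}e_*^\Sch e_\Sch^*F$ on $\PShfrgpAS(\Sch_k)[1/p^\prime]$; although the lemma is stated in the $\bbZ$-linearized setting, the geometric construction of the framed correspondence $f$ with $u\circ f=p_\varepsilon$ already lives at the grouplike framed level, and $p_\varepsilon$ is invertible after inverting $p^\prime$ by Lemma~\ref{lm:addp_invertingofpimpliesinveringofpesilon}. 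Hence $e_\Sch^*$ is fully faithful.

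For the counit in Part~(1), the triangle identity $e_*^\Sch(\varepsilon_G)\circ\eta_{e_*^\Sch G}=\id_{e_*^\Sch G}$, combined with the unit equivalence just established, forces $e_*^\Sch(\varepsilon_G)$ to be an equivalence for every $G$; it therefore suffices to show that $e_*^\Sch$ is conservative on $\PShfrgpAS(\Sch_K)[1/p^\prime]$. For any $Y\in\Sch_K$, viewing $Y$ as a $k$-scheme $Y_k$, the diagonal $Y\hookrightarrow Y\times_k K=e_\Sch(Y_k)$ is a nilpotent closed immersion of index at most $p$, which becomes an equivalence in $\PShfrgpAS(\Sch_K)[1/p^\prime]$; this yields $G(Y)\simeq (e_*^\Sch G)(Y_k)$, so $e_*^\Sch G\simeq 0$ forces $G\simeq 0$. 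Part~(2) then follows formally: the same construction of $f$ applies verbatim on $\Sm_k$, giving coreflectivity on $\Sm$; and for any $\Ao$-invariant $F\in\PShfrgpAS(\Sm_k)[1/p^\prime]$ the unit presents $F\simeq e_{*,\Sm}(e^*_\Sm F)$ with $e^*_\Sm F$ again $\Ao$-invariant since $e$ commutes with $-\times\Ao$, yielding the required essential surjectivity.

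The step I expect to be the main obstacle is the nil-invariance used in the conservativity of $e_*^\Sch$, namely that diagonal nil-thickenings coming from purely inseparable base change are equivalences in $\PShfrgpAS(\Sch_K)[1/p^\prime]$; this is the framed, $\Ao$-invariant, radditive, $p$-inverted analogue of the standard motivic nilinvariance, and is precisely where the inversion of $p^\prime$ is essential.
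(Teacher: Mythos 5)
Your treatment of Part (1) follows the paper's strategy (reduce to the prime-degree case, invoke Lemma~\ref{lm:pdgreeextensionlinfting}(2) for the unit, bootstrap to the counit), with one difference worth flagging: the paper passes from the $\mathrm H\bbZ$-linearized statement of Lemma~\ref{lm:pdgreeextensionlinfting}(2) to the grouplike framed setting via the conservativity of linearization (\Cref{prop:SwnConservativityLinearisation}), whereas you assert that the identity $u\circ f=p_\varepsilon$ ``already lives at the grouplike framed level.'' As written in both this paper and its cited source, that identity is available in $\pi_0\mathrm{Map}(\hZgpfr(X),\hZgpfr(X_K))$, i.e.\ after linearization, and the conservativity step is what lets you conclude in $\PShfrgpAS(\Sch_k)[1/p^\prime]$. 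Your nilinvariance argument for conservativity of $e^\Sch_*$ is fine, but note that the inversion of $p^\prime$ is not what makes the diagonal thickening $Y\hookrightarrow Y\times_k K$ an $\Ao$-equivalence --- that is an unconditional $\Ao$-homotopy --- rather, $p^\prime$-inversion is what makes the framed transfer $f$ into a splitting.

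The genuine gap is in Part~(2). You write that ``the same construction of $f$ applies verbatim on $\Sm_k$, giving coreflectivity on $\Sm$,'' but this fails: $\Spec K$ is \emph{not} a smooth $k$-scheme when $K/k$ is a nontrivial purely inseparable extension, and more generally $X_K=X\times_k\Spec K$ is not smooth over $k$ for $X\in\Sm_k$. Therefore neither $h^{\gpfr}(\Spec K)$ nor $h^{\gpfr}(X_K)$ is a representable framed presheaf on $\Sm_k$, and the splitting morphism $f\colon X\to X_K$ has nowhere to live inside $\PShfrgpAS(\Sm_k)$. This is exactly why the paper proves the equivalence on $\Sch_k$ first and then transports the result to $\Sm_k$ via the coreflection $s^*\colon\PShfrgp(\Sm_k)\leftadjright\PShfrgp(\Sch_k)\colon s_*$, deducing coreflectivity of \eqref{eq:adj:PShfrASminvchk} and essential surjectivity of \eqref{eq:PShfrgpSSmKSmk} on $\Ao$-invariant objects from the $\Sch$-level equivalence together with \Cref{lm:commute:els:Let_AND_:LA_AND_OmegaGm}(2). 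Your proof of Part~(2) needs to be replaced by this kind of restriction argument; it cannot be obtained by rerunning the transfer construction inside $\Sm_k$.
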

\begin{proof}
    The claims for $p^\prime=p$ follow 
    from the claims for $p^\prime=p_{\varepsilon}$
    by \Cref{lm:addp_invertingofpimpliesinveringofpesilon}.
    Let $p^\prime=p_{\varepsilon}$.

Without loss of generality we may assume that $K/k$ is finite because $e^\Sch_*$
preserves limits, and moreover, we may assume that $K/k$ is prime.
    The equivalence \eqref{eq:adj:PShfrASchinvchk} follows by \Cref{lm:pdgreeextensionlinfting}(2) because of \Cref{prop:SwnConservativityLinearisation}.
Consider the coreflection
\begin{equation}\label{eq:restr:PShfrSchkSmk}
s^*\colon\PShfrgp(\Sm_k)\leftadjright\PShfrgp(\Sch_k)\colon s_*, 
\end{equation}
induced by the fully faithful functor $\Sm_k\to\Sch_k$,
and similar one for $K$.
The coreflection \eqref{eq:adj:PShfrASminvchk}
follows 
from the equivalence \eqref{eq:adj:PShfrASchinvchk}
because of the coreflection induced by $s^*\dashv s_*$ on $\PShfrgpA(-)$.
Moreover, 
since $\calL_{\Ao}s_*\simeq\calL_{\Ao}s_*$,
the functor $s_*$ is essentially surjective on $\Ao$-invariant objects. 
Hence \eqref{eq:PShfrgpSSmKSmk}
is essentially surjective on $\Ao$-invariant objects
because of \Cref{lm:commute:els:Let_AND_:LA_AND_OmegaGm}(2), 
and the equivalence \eqref{eq:adj:PShfrASchinvchk}. 
%
\end{proof}





\section{Rigidity and strict $\Ao$-invariance for framed abelian presheaves.}
\label{sect:RigiditySHIZFrperfk}



\begin{theorem}[\protect{\cite{FramedRigidityLoc}}]
\label{prop:RigidityXhx:ZfrnalphaA:etlocconst}
Let $k$ be a perfect field.
Let $n\in \mathbb Z$ such that $(n,\echark k)=1$. 
Let 
$F\in\ZnPShASigma(\Fr_+(\Sm_k))$ be quasi-stable, or
$F\in \ZPShfrASigma(\Sm_k)$
such that
$F\simeq \varinjlim F_{n^\alpha}$.
Then 
for any $X\in \Sm_k$, and $x\in X$, 
there is a canonical isomorphism
\begin{equation}\label{eq:FXhxsimeqFX}F(X^h_x)\simeq F(x).\end{equation}
Consequently,
any $F\in \SptShatnfrAS(\Sm_k)$ 
is locally constant with respect to the \'etale topology on $\Sm_k$.
\end{theorem}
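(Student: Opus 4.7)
The plan is to adapt the classical Suslin--Voevodsky rigidity strategy from finite correspondences to Voevodsky's framed correspondences, combined with $\Ao$-invariance. First, by a standard dévissage we may reduce to the case where $F$ is annihilated by $n$: for the colimit hypothesis $F\simeq \varinjlim F_{n^\alpha}$ we induct on $\alpha$ using the exact sequences $0\to F_{n^{\alpha-1}}\to F_{n^\alpha}\xrightarrow{n^{\alpha-1}} F_n$, and for $F\in\ZnPShASigma$ this annihilation is built in. Next, by naturality we reduce to showing the isomorphism for $X=\Spec\mathcal{O}^h_{X,x}$ henselian local with closed point $x$. Perfectness of $k$ provides a coefficient field, i.e.\ a retraction $\pi\colon X\to\Spec k(x)$ of the closed embedding $i\colon\{x\}\hookrightarrow X$. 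Applying $F$, this gives $\pi^*$ as a section of $i^*$, so the task is to show $\ker(i^*)=0$.

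The main geometric step is as follows. Given $a\in\ker(i^*)$, spread $a$ out to a section over an étale neighborhood $U\to X$ of $x$, and embed $U$ as an open subscheme of a smooth relative curve $\bar C\to S$ over a henselian local base, equipped with a finite surjective map $f\colon\bar C\to\Ao_S$ sending $x$ to $0\in\Ao_S$. The divisor $D=f^{-1}(0)-f^{-1}(1)$ on $\bar C$ has degree $0$ on each fiber and meets $U$ in a finite scheme over $S$ containing $x$. Interpret $D$ as a framed correspondence $\tau_D\in\Fr_N(S,U)$ via the trivialization of the conormal bundle coming from $f$; here the quasi-stability assumption is used to identify the actions of $\tau_D$ at different framing levels canonically. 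By $\Ao$-invariance of $F$ the action of $\tau_D$ on $F$ factors through the connected component of $D$ in the relative Picard group, so $\tau_D^*(a)$ equals $i^*(a)=0$ on the component containing the sections from $f^{-1}(0)$ and equals $\pi^*i^*(a)=0$ on the other. Expressing $a$ itself as $\tau_D^*$ applied to a suitable auxiliary class (this is where the degree of $f$ appears, which is invertible modulo $n$ after a choice of $\bar C$ with degree prime to $n$, arranged using the $n$-torsion hypothesis together with residue field manipulations over the perfect base $k$) yields $a=0$.

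The main obstacle is constructing the framed correspondence $\tau_D$ compatibly with the quasi-stability condition and verifying that its action factors through linear equivalence of divisors. This is the place where working with framed rather than finite correspondences genuinely requires new input: the reconstruction of the ``Picard group action'' from framed correspondences rests on the $\Ao$-invariance already built into the hypothesis on $F$ together with Voevodsky's basic lemma comparing framed correspondences to divisors with trivialized normal bundles. In characteristic prime to $n$ one can always twist $\bar C$ to arrange the degree condition, which is why the hypothesis $(n,\echark k)=1$ appears.

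The concluding statement about local constancy follows by applying \eqref{eq:FXhxsimeqFX} levelwise to the homotopy presheaves $\pi_i(F)$ of any $F\in\SptShatnfrAS(\Sm_k)$, after using \Cref{prop:complettentoquotient_category} and \Cref{prop:SwnConservativityLinearisation} to reduce the $\compln$-complete statement to $\bbZ/l$-coefficients, where the abelian-presheaf hypothesis of the theorem applies. On each strictly henselian local $X^h_x$ one gets $\pi_i(F)(X^h_x)\cong\pi_i(F)(x)$, which is the definition of $F$ being locally constant as a hypersheaf on the small étale site of $\Sm_k$.
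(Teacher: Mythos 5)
Your proposal tries to reprove the cited result from scratch, whereas the paper's proof of this theorem is a short citation plus d\'evissage: since $k$ is perfect, every point $x\in X$ is essentially smooth over $k$, so \cite[Theorem 7.10]{FramedRigidityLoc} gives \eqref{eq:FXhxsimeqFX} for quasi-stable $F\in\ZnPSh_\Sigma(\Fr_+(\Sm_k))$, and this is then transported to the two stated abelian-presheaf categories and finally to $\SptShatnfrAS(\Sm_k)$. Your plan for the geometric argument follows the Suslin--Voevodsky rigidity strategy adapted to framed correspondences, which is indeed the strategy of the cited reference; but the steps you leave as gestures --- the construction of $\tau_D$ as a (quasi-stable) framed correspondence from a relative Cartier divisor with trivialized conormal bundle, the proof that its action factors through the connected component of the relative Picard scheme, and the degree-killing identity producing $d\cdot a=0$ with $d$ prime to $n$ --- are exactly the technical content supplied by \cite{FramedRigidityLoc}. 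So while your plan is not wrong, it is a sketch of the cited paper rather than of this theorem's proof, and it does not give a self-contained argument.

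Your final paragraph deduces local constancy by a route the paper does not take. You invoke \Cref{prop:complettentoquotient_category} and \Cref{prop:SwnConservativityLinearisation}, which are conservativity statements about reduction mod $n$ and linearization; these do not reduce a statement about $\pi_i(F)$ for $F\in\SptShatnfrAS(\Sm_k)$ to the hypotheses of the first part of the theorem. What is actually needed, and what the paper uses, is that each $h^\fr(U)$ is compact in $\SptSfrSigma(\Sm_k)$, so that $l$-completeness of $F$ (i.e.\ $F\simeq\colim_\alpha F/n^\alpha$) passes to sections and yields $\pi_l(F)\simeq\varinjlim_\alpha\pi_l(F)_{n^\alpha}$; this puts $\pi_l(F)$ directly under the colimit hypothesis of the theorem. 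Combined with the retraction $X^h_x\to x$ for closed points $x$ (again from perfectness of $k$), this gives the claimed local constancy. Your appeal to conservativity does not replace the compactness step.
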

\begin{proof}
    Since $k$ is perfect,
    for each point $x\in X$, $x$ is an essentially smooth scheme over $k$.
    Hence by \cite[Theorem 7.10]{FramedRigidityLoc}
    \eqref{eq:FXhxsimeqFX} holds for 
    quasi-stable $F\in\ZnPSh_\Sigma(\Fr_+(\Sm_k))$, and consequently, for 
    $F\in \ZnPShfrASigma(\Sm_k)$,
    that implies \eqref{eq:FXhxsimeqFX}.
    
    For any $F\in \SptShatnfrAS(\Sm_k)$, for each $l\in\bbZ$, 
    the presheaf $\pi_l(F)$ is in $\ZPShfrASigma(\Sm_k)$ 
    and $\pi_l(F)\simeq \varinjlim_{\alpha}\pi_l(F)_{n^\alpha}$
    because objects $h^\fr(U)\in \SptSfrSigma(\Sm_k)$ for $U\in\Sm_k$ are compact.
    So the second claim follows because for each closed point $x\in X$, there is a retraction $X^h_x\to x$ since $k$ is perfect, see \cite[Lemma 3.10]{RigiditySmAff}.
\end{proof}

\begin{remark}\label{rem:RigidityFr}
More generally, 
\eqref{eq:FXhxsimeqFX} holds for 
quasi-stable $F\in\ZnPShASigma(\Fr_+(\Sm_k))$ 
such that $F\simeq\varinjlim_{\alpha}F_{n^\alpha}$.
\end{remark}

\begin{theorem}
\label{th:SHI:ZPShfrA}
For any 
$F\in \ZPShfr_{\Ao,\Sigma}(\Sm_k)$ 
such that $F\simeq\varinjlim_{\alpha}F_{n^\alpha}$,
there is a canonical isomorphism
\[H_\cet^*(X\times\Ao,F_{\cet})\simeq H^*_\cet(X,F_{\cet}).\]
\end{theorem}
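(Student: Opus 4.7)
The plan is to combine the rigidity theorem (Theorem \ref{prop:RigidityXhx:ZfrnalphaA:etlocconst}) with the classical $\Ao$-invariance of étale cohomology for locally constant torsion sheaves of order prime to the characteristic, after first reducing to the case of a perfect base field via the purely inseparable extension results of Section \ref{sect:PurelyinsepextensionsASigma}.

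\textbf{Reduction to perfect $k$.} Since $(n,\echark k)=1$ and $F\simeq\varinjlim_\alpha F_{n^\alpha}$, each $F_{n^\alpha}$ is $n^\alpha$-torsion and hence $\bbZ[1/p]$-linear (where $p=\echark k$), so the same holds for $F$. Let $K=k^{\mathrm{perf}}$ and $e\colon\Spec K\to\Spec k$. By Proposition \ref{prop:pinsepextSchchk}(2) (in its $\Ab^\fr_\Sigma$-version), the functor $e_*$ is essentially surjective on $\Ao$-invariant objects after inverting $p$; therefore $F\simeq e_*\tilde F$ for some $\tilde F\in\ZPShfr_{\Ao,\Sigma}(\Sm_K)[1/p]$, and one checks that $\tilde F\simeq\varinjlim_\alpha\tilde F_{n^\alpha}$. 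By Lemma \ref{lm:commute:els:Let_AND_:LA_AND_OmegaGm}(1), $F_\cet\simeq e_*(\tilde F_\cet)$. Since $e$ is a filtered limit of finite purely inseparable morphisms, it induces an equivalence of small étale sites, so $H^*_\cet(X,F_\cet)\simeq H^*_\cet(X_K,\tilde F_\cet)$ and analogously for $X\times\Ao$. Hence it suffices to prove the theorem over the perfect field $K$.

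\textbf{Rigidity plus classical homotopy invariance.} Assume now $k$ is perfect. Theorem \ref{prop:RigidityXhx:ZfrnalphaA:etlocconst} provides canonical isomorphisms $F(X^h_x)\simeq F(x)$ for all $X\in\Sm_k$ and $x\in X$, which implies that the restriction of $F_\cet$ to the small étale site of any $X\in\Sm_k$ is a locally constant sheaf of $n$-primary torsion abelian groups. For any such sheaf $\mathcal{G}$ on $X$ with $(n,\echark k)=1$, the projection $\pi\colon X\times\Ao\to X$ induces an isomorphism
\[
H^*_\cet(X,\mathcal{G})\xrightarrow{\simeq}H^*_\cet(X\times\Ao,\pi^*\mathcal{G}),
\]
by the classical $\Ao$-invariance of étale cohomology with torsion coefficients prime to the characteristic (\cite[XV~2.2]{SGAfour}, cf.\ \cite[Lemma~9.23]{MVW}). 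Applied to $\mathcal{G}=F_\cet|_X$, and observing that $\pi^*(F_\cet|_X)\simeq F_\cet|_{X\times\Ao}$ because $F_\cet$ is a sheaf on the ambient big étale site, this yields the desired isomorphism.

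\textbf{Main obstacle.} The principal difficulty lies in the reduction step: one must realize $F$ as $e_*\tilde F$ in a way compatible with the presentation $F\simeq\varinjlim F_{n^\alpha}$, and then justify the base-change identification of étale cohomologies. Both points are controlled by the machinery of Section \ref{sect:PurelyinsepextensionsASigma}, together with the invariance of étale cohomology under universal homeomorphisms for coefficients of order prime to the residue characteristic. Once this reduction is in place, the perfect case follows cleanly from the combination of rigidity and the classical homotopy invariance of étale cohomology with locally constant torsion coefficients.
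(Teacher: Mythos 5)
Your proof is correct and follows essentially the same two-step strategy as the paper: reduce to the perfect case via the purely inseparable extension machinery of \Cref{sect:PurelyinsepextensionsASigma}, then combine the rigidity theorem (\Cref{prop:RigidityXhx:ZfrnalphaA:etlocconst}) with the classical $\Ao$-invariance of étale cohomology with locally constant torsion coefficients from \cite[XV 2.2]{SGAfour} and \cite[Lemma 9.23]{MVW}. The only cosmetic difference is in the wrap-up of the non-perfect case: you invoke the equivalence of small étale sites under purely inseparable extensions to identify cohomology groups directly, whereas the paper phrases this as $\calL_\cet\circ e_*\simeq e_*\circ\calL_\cet$ (\Cref{lm:commute:els:Let_AND_:LA_AND_OmegaGm}(1)) and transports the $\Ao$-invariance of the cohomology presheaf through $e_*$; these are the same underlying observation.
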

\begin{proof}
The claim for a perfect field $k$
follows from
the strict homotopy invariance theorem for locally constant torsion presheaves
\protect{\cite[Lemma 9.23]{MVW}, \cite[page 126, XV 2.2]{SGAfour}}
because of 
\Cref{prop:RigidityXhx:ZfrnalphaA:etlocconst}.

Let $k$ be any field, and $K=k^\mathrm{perf}$ be purely inseparable closure.
By \Cref{prop:pinsepextSchchk}(2) $F=e_*(F^\prime)$ for some $F\in\ZPShfr_{\Ao,\Sigma}(\Sm_k)$. Then by the above the presheaves $H_\cet^*(X\times\Ao,F^\prime_{\cet})\in\ZPShfr_{\Sigma}(\Sm_K)$ are $\Ao$-invariant. Hence by \Cref{lm:commute:els:Let_AND_:LA_AND_OmegaGm} the presheaves $H_\cet^*(X\times\Ao,F_{\cet})\in\ZPShfr_{\Sigma}(\Sm_k)$ are $\Ao$-invariant.
\end{proof}

\section{\'Etale local framed motives and motivic spectra}\label{sect:HypcEtFrMotives}

In this section, we build analogs of \cite{garkusha2018framed} and \cite{elmanto2021motivic}
for hypercomplete \'etale $l$-complete localization.
The central results are 
concentrated in \Cref{sect:OmegaPAetRNisetcompln}
and prove the formula for the composte functor 
\[
\Omega_{\Gm}^\infty\Sigma_{\PP^1}^\infty
\colon 
\Delta^\mathrm{op}\Sm_k\to\SptPhatnAhypet(\Sm_k)\to\Spt(\Sm_k)
\] 
like in \Cref{sect:FramedNis}.
\Cref{sect:LAet-SptSfrAet} computes the functor
\[\calL^{\gpfr}_{\Ao,\et}=E^{\gpfr}_{\Ao,\et}L^{\gpfr}_{\Ao,\et}\colon 
\SptSfrSigma(\Sm_k)\to\SptSfrAet(\Sm_k)\to\SptSfrSigma(\Sm_k)
.\]
\Cref{sect:complnhypetA1frCancellation} computes the functor
\[\Omega_{\Gm}^\infty\Sigma_{\Gm}^\infty\colon 
\SptShatnAhypet(\Sm_k)\to\SptPhatnAhypet(\Sm_k)\to\SptShatnAhypet(\Sm_k).\]
\Cref{sect:SCancelation} computes the functor
\[\Omega_{S^1}^\infty\Sigma_{S^1}^\infty\colon 
\Spcfr(\Sm_k)\to\SptfrShatnAhypet(\Sm_k)\to\Spcfr(\Sm_k).\]
\Cref{sect:OmegaPAetRNisetcompln}
summarises three previous steps.
\Cref{sect:Appl} discusses the applications.


\subsection{$S^1$-stable framed hypercomplete motivic localization}\label{sect:LAet-SptSfrAet}
Let $k$ be a field.

\begin{proposition}\label{prop:Let(_A)}
    If $F\in\SptShatnfrSigma(\Sm_k)$ is $\Ao$-invariant,
    then $\calL_{\et}(\gamma_*F)$ is $\Ao$-invariant.
\end{proposition}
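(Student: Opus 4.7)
The plan is to apply the hypercomplete \'etale descent spectral sequence to reduce the $\Ao$-invariance of $\calL_\et(\gamma_* F)$ to the strict homotopy invariance of the individual homotopy sheaves of $F$, which is furnished by \Cref{th:SHI:ZPShfrA}.

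First I would verify that each homotopy presheaf $\pi_q(\gamma_* F) \cong \gamma_* \pi_q(F)$, $q \in \bbZ$, satisfies the hypotheses of \Cref{th:SHI:ZPShfrA}. These presheaves inherit from $F$ the structure of radditive framed abelian presheaves lying in $\ZPShfrASigma(\Sm_k)$ and, being homotopy groups of an $\Ao$-invariant object, are themselves $\Ao$-invariant. Moreover, the $l$-completeness of $F$ means $F[n^{-1}] \simeq 0$ in the sense of \eqref{eq:sequence:FnFFn}, hence $\pi_q(F)[1/n] = 0$, or equivalently $\pi_q(F) \simeq \varinjlim_\alpha \pi_q(F)_{n^\alpha}$. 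Thus \Cref{th:SHI:ZPShfrA} applies and yields, for every $X \in \Sm_k$,
\[
H^p_\et(X \times \Ao, \pi_q(F)_\et) \simeq H^p_\et(X, \pi_q(F)_\et).
\]

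Next I would feed this input into the hypercomplete \'etale descent spectral sequence of \Cref{prop:specseq_for_etalepisheaves:perfect:boundedncomplete} (after \cite[Proposition 2.13]{ClausenMathewHypdescetaleKth}):
\[
E_2^{p,q}(X) = H^p_\et\bigl(X, \pi_q(\gamma_* F)_\et\bigr) \;\Longrightarrow\; \pi_{q-p}\bigl(\calL_\et(\gamma_* F)(X)\bigr),
\]
together with its analogue for $X \times \Ao$. The projection $X \times \Ao \to X$ induces a morphism between these spectral sequences which is an isomorphism on the $E_2$-page by the previous step, and therefore on the $E_\infty$-page. Passing to abutments yields an equivalence $\calL_\et(\gamma_* F)(X) \xrightarrow{\simeq} \calL_\et(\gamma_* F)(X \times \Ao)$, which is exactly the desired $\Ao$-invariance.

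The main subtlety I anticipate is the convergence of the descent spectral sequence, since $F$ is only $l$-complete and need not be bounded below. For smooth schemes over $k$ this is handled by the finite \'etale $\bbZn$-cohomological dimension available under the standing assumption $(l, \echark k) = 1$, as in Clausen--Mathew. Should stronger control be required, one reduces modulo $l$ via the conservativity statements \Cref{prop:complettentoquotient_category} and \Cref{prop:SwnConservativityLinearisation}, passing to presheaves of $\HZn$-modules where the relevant boundedness is immediate; alternatively, one argues on each Postnikov truncation of $F$, where the spectral sequence strongly converges, and then recovers the statement for $F$ itself using compatibility of the Postnikov limit with $l$-completion.
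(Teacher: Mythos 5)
Your core strategy — run the hypercomplete \'etale descent spectral sequence and feed in the strict homotopy invariance of \Cref{th:SHI:ZPShfrA} — is exactly what the paper does, and the verification that each $\pi_q(\gamma_*F)$ satisfies the hypotheses of \Cref{th:SHI:ZPShfrA} is also in order (the paper derives $\pi_q(F)\simeq\varinjlim_\alpha\pi_q(F)_{n^\alpha}$ from compactness of $h^\fr(U)$, you derive it from $F[n^{-1}]\simeq 0$; both work). Where you go astray is the convergence discussion. Your primary remedy — that finite \'etale $\bbZ/l$-cohomological dimension holds under the sole running hypothesis $(l,\echark k)=1$ — is false for a general base field: $\cd_l(k)$ is unbounded in this generality (already for fields of large transcendence degree over the prime field), and \cite{ClausenMathewHypdescetaleKth} assume finite cohomological dimension where you would need it. Your second fallback, reducing mod $l$, does not address the issue, since $\HZn$-linearity imposes no bound on the schemewise homotopy groups. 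Your third fallback is the right one, but the key point is not ``compatibility of the Postnikov limit with $l$-completion''; it is that $\calL_\et$ commutes with taking limits of truncations from above, i.e.\ $\calL_\et F\simeq\varprojlim_b\calL_\et t_{\leq b}F$, which the paper isolates as \Cref{lm:varprojlimlLettleqlFleftarrowLetFtoLetvarprojlimltleqlF}. The paper therefore reduces at the outset to $F$ bounded above in the schemewise $t$-structure; for such $F$, \Cref{prop:specseq_for_etalepisheaves:perfect:boundedncomplete} gives conditional convergence with no cohomological-dimension hypothesis, and the general case is recovered by passing to the Postnikov limit. You should make that reduction explicit in place of the cohomological-dimension claim.
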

\begin{proof}
    Since $F\simeq \varprojlim_{l} t_{\leq b}F$,
    where $t_{\leq b}F\in \SptS^\fr_{\Sigmaleq{l}}(\Sm_k)\simeq (\SptS_{\leq b})^\fr_{\Sigma}(\Sm_k)$,
    because of \Cref{lm:varprojlimlLettleqlFleftarrowLetFtoLetvarprojlimltleqlF},
    without loss of generality $F\in \SptS^\fr_{\Sigmaleq}(\Sm_k)$, and consequently $\gamma_*F\in\SptS_{\Sigmaleq}(\Sm_k)$.  
    Then $\calL_\et \gamma_*F\in \SptS_{\etleq}(\Sm_k)$ 
    is $\et$-locally bounded above
    because $\calL_\et$ is t-exact \cite[Example 2.2]{ClausenMathewHypdescetaleKth} and \cite[Remark 1.3.2.8]{LurieHA}.
    
    The \'etale sheaves associated with 
    $\pi_{q}(\calL_\et \gamma_*F)$ equal
    the ones associated with 
    $\pi_{q}(\gamma_*F)$,
    see Notation \ref{notation:pi(F)(U)}.
    Hence 
    for any 
    smooth scheme $U$ over $k$,
    there are the spectral sequences
    \begin{equation}\label{eq:secseq:HApipiLetA_HpipiLet}\begin{array}{lcl}
    H_{\cet}^{p}(\Ao\times U,\pi_{q}(\gamma_*F)_{\cet}) &\Rightarrow& \pi_{l}((\calL_\et \gamma_*F)^{\Ao})(U),\\
    H_{\cet}^{p}(U,\pi_{q}(\gamma_*F)_{\cet}) &\Rightarrow& \pi_{l}(\calL_\et \gamma_*F)(U)
    ,\end{array}\end{equation}
    see Notation \ref{notation:FtauHXF} for the left side, where $l=-p+q$, 
    that conditionally converge by \Cref{prop:specseq_for_etalepisheaves:perfect:boundedncomplete}. 
    Since $F$ is $\Ao$-invariant, 
    the presheaves $\pi_{q}(\gamma_*F)$ are $\Ao$-invariant,
    see Notation \ref{notation:pi(F)(U)}.
    Hence by \Cref{th:SHI:ZPShfrA} the presheaves
    $H_{\cet}^{p}(-,\pi_{q}(\gamma_*F)_{\cet})$ are $\Ao$-invariant.
    So the projection $\Ao\times X\to X$ induces the isomorphisms 
    \[
    H_{\cet}^{p}(\Ao\times U,\pi_{q}(\gamma_*F)_{\cet}) \xleftarrow{\simeq}
    H_{\cet}^{p}(U,\pi_{q}(\gamma_*F)_{\cet}),
    \]
    and because of the spectral sequences 
    \eqref{eq:secseq:HApipiLetA_HpipiLet}
    it follows that 
    \begin{equation}\label{eq:pilLetgammaFA1UcongpilLetgammaFU}
    \pi_{l}((\calL_\et\gamma_*F)^{\Ao})(U)\xleftarrow{\simeq}
    \pi_{l}(\calL_\et\gamma_*F)(U)        
    \end{equation}
    for all $l\in\bbZ$ and $U$ as above.
    By \eqref{eq:pilLetgammaFA1UcongpilLetgammaFU} the presheaves $\pi_l(\calL_\et\gamma_*F)$ in $\Ab(\Sm_k)$ are $\Ao$-invariant.
    Hence $\calL_\et\gamma_*F$ is $\Ao$-invariant.
\end{proof}

\begin{theorem}\label{th:LA1et}
    For any $F\in\SptShatnfrSigma(\Sm_k)$, 
    there is a natural equivalence 
    \begin{equation}\label{eq:LfrAetFsimeqLfretLfeAF}
        \calL^\fr_{\Ao,\et}(F)\simeq \calL^\fr_{\et}\calL^\fr_{\Ao}(F).
    \end{equation}
\end{theorem}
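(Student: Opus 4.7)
The plan is to verify that $\calL^\fr_\et \calL^\fr_\Ao(F)$ satisfies the universal property of the $(\Ao,\et)$-localization of $F$. This reduces to two statements: (a) the object $\calL^\fr_\et \calL^\fr_\Ao(F)$ is both $\et$-local and $\Ao$-invariant, and (b) the canonical map $F \to \calL^\fr_\et \calL^\fr_\Ao(F)$ is an $(\Ao,\et)$-equivalence.

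Statement (b) is formal: the map factors as
\[
F \longrightarrow \calL^\fr_\Ao(F) \longrightarrow \calL^\fr_\et \calL^\fr_\Ao(F),
\]
a composition of an $\Ao$-equivalence with an $\et$-equivalence, both of which are $(\Ao,\et)$-equivalences, hence so is the composite.

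For statement (a), $\et$-locality is automatic from the outer application of $\calL^\fr_\et$. The crux is $\Ao$-invariance. Setting $G := \calL^\fr_\Ao(F)$, so that $G$ is $\Ao$-invariant and still lies in $\SptShatnfrSigma(\Sm_k)$, I would invoke \Cref{prop:Let(_A)} to conclude that $\calL_\et(\gamma_* G)$ is $\Ao$-invariant. Rewriting $\calL_\et \gamma_* G \simeq \gamma_* \calL^\fr_\et G$ via \Cref{lm:Letgammads}(1), this makes $\gamma_* \calL^\fr_\et G$ an $\Ao$-invariant presheaf. To transfer this back to $\calL^\fr_\et G$ itself, I would apply \Cref{lm:Letgammads}(2), which gives $\gamma_* \calL^\fr_\Ao(\calL^\fr_\et G) \simeq \calL_\Ao \gamma_* \calL^\fr_\et G \simeq \gamma_* \calL^\fr_\et G$, and then use the conservativity of $\gamma_*$ from \Cref{lm:gammauslsAeteq} to cancel it and conclude $\calL^\fr_\Ao \calL^\fr_\et G \simeq \calL^\fr_\et G$, i.e.\ $\Ao$-invariance of $\calL^\fr_\et \calL^\fr_\Ao(F)$.

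The main obstacle is already resolved in \Cref{prop:Let(_A)}, where the strict homotopy invariance \Cref{th:SHI:ZPShfrA} interacts with the Clausen--Mathew local-to-global spectral sequence via the rigidity result \Cref{prop:RigidityXhx:ZfrnalphaA:etlocconst}. Once that proposition is available, the remainder of the argument is a purely formal shuffle of the adjunction identities and conservativity statements collected in \Cref{lm:Letgammads} and \Cref{lm:gammauslsAeteq}, and no new geometric input is required.
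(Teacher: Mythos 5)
Your proof is correct and follows essentially the same route as the paper's one-line argument, which combines \Cref{prop:Let(_A)} with \Cref{lm:Letgammads}(1); you have merely spelled out the implicit steps (formality of the $(\Ao,\et)$-equivalence $F\to\calL^\fr_\et\calL^\fr_\Ao F$, and the use of conservativity of $\gamma_*$ from \Cref{lm:gammauslsAeteq} and $\calL_\Ao\gamma_*\simeq\gamma_*\calL^\fr_\Ao$ from \Cref{lm:Letgammads}(2) to transfer $\Ao$-invariance of $\gamma_*\calL^\fr_\et G$ back to $\calL^\fr_\et G$).
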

\begin{proof}
    Since 
    $\gamma_*\calL_{\et}^\fr(F)\simeq \calL_\et\gamma_*(F)$
    by \Cref{lm:Letgammads}, 
    the claim follows by \Cref{prop:Let(_A)}.
\end{proof}


\begin{remark}\label{rem:LetLAFrY}
    By the similar arguments to \Cref{prop:Let(_A)}
    and \Cref{th:LA1et},
    it follows that
    $\calL_{\et}(\gamma_{\Fr_{+}}^*\gamma^{\Fr_{+}}_*F)\in\SptSet(\Sm_k)$ is $\Ao$-invariant
    for a quasi-stable $\Ao$-invariant $F\in\SptS(\Fr_+(\Sm_k))_\compln$,
    see Notation \ref{notation:gammaFr}.
    Consequently,
    for any 
    quasi-stable 
    $F\in\SptS(\Fr_+(\Sm_k))_\compln$,
    there is a natural equivalence 
    \[\calL_{\Ao,\et}(\gamma_{\Fr_{+}}^*\gamma^{\Fr_+}_*F)\simeq \calL_{\et}\calL_{\Ao}(\gamma_{\Fr_{+}}^*\gamma^{\Fr_+}_*F).\]
\end{remark}

\subsection{$S^1$-Cancellation}\label{sect:SCancelation}



\begin{theorem}[$S^1$-cancellation]\label{th:SCancellation}
Let $S$ be a scheme.
    
    (0)
    The adjunctions 
    \[
    \Sigma^{\infty,\gpfr,*,\star}_{S^1}\colon
    \Spc^{\gpfr,*}_{\star}(\Sm_S)\leftadjright \Spt^{\gpfr,*}_\star(\Sm_S)\colon
    \Omega^{\infty,\gpfr,*,\star}_{S^1}
    ,\]
    see Notation \ref{notation:supersubscriptcombPPGm},
    for 
    any $*$ and $\star$ as in Notation \ref{notation:Psstardash_lists}
    are
    coreflections. 

    (1) 
    The functor
    $\Sigma^{\infty,\gpfr}_{S^1}$
    (for $*=\emptyset$ and $\star=\emptyset$)
    (1a)
    commutes with 
    $\calL_{\Ao}^\gpfr$, 
    $\Sigma_{\Gm}^\gpfr$,
    $\Omega_{\Gm}^\gpfr$,
    (1b)
    preserves 
    $\et$-equivalences,
    and 
    (1c)
    is conservative 
    with respect to 
    $\et$-equivalences,
    see Notation \ref{notation:hypet}.
\end{theorem}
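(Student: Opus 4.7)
The plan is to prove (0) first in the unadorned case $*=\emptyset$, $\star=\emptyset$, propagate it through all the listed localizations and structures, and then reduce (1) to (0) via general abstract arguments plus one geometric input: the splitness of the unit of $\Gm$. For (0) in the base case, I would identify $\Spc^{\gpfr}_{\Sigma}(\Sm_S) \simeq \Func^{\times}((\Corr^{\gpfr}_S)^{\mathrm{op}}, \Spc)$. Additivity of $\Corr^{\gpfr}_S$ forces these values to be grouplike $\mathbb{E}_\infty$-spaces, and via $\CMon^{\gp}\simeq\Spt_{\geq 0}$ I can rewrite this as $\Func^{\times}((\Corr^{\gpfr}_S)^{\mathrm{op}}, \Spt_{\geq 0})$. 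Dually, $\Spt^{\gpfr}_{\Sigma}(\Sm_S) \simeq \Func^{\times}((\Corr^{\gpfr}_S)^{\mathrm{op}}, \Spt)$, so $\Sigma^{\infty}_{S^1}\dashv\Omega^{\infty}_{S^1}$ is induced pointwise by the fully faithful inclusion $\Spt_{\geq 0}\hookrightarrow\Spt$ with right adjoint $t_{\geq 0}$ --- hence a coreflection. To propagate (0) through the listed $(*,\star)$, I would verify that both $\Sigma^{\infty}_{S^1}$ and $\Omega^{\infty}_{S^1}$ preserve each relevant class of local equivalences (for $\Sigma^{\infty}_{S^1}$ automatic as a colimit-preserving left adjoint; for $\Omega^{\infty}_{S^1}$ using the t-exactness of $L_\et$ invoked in the proof of \Cref{prop:Let(_A)}, together with the fact that $\Omega^{\infty}_{S^1}$ only sees the connective cover), so that the coreflection structure descends.

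For (1a), commutation with $\calL^{\gpfr}_{\Ao}$ and $\Sigma^{\gpfr}_{\Gm}$ is formal since both are left-adjoint operations and $\Sigma^{\infty}_{S^1}$ is a symmetric-monoidal left adjoint. For the commutation with $\Omega^{\gpfr}_{\Gm}$, the key geometric input is that $\Omega^{\gpfr}_{\Gm}F$ is pointwise the fiber of the restriction $F(\Gm\times U)\to F(U)$ along the unit $\pt\to\Gm$, which carries a canonical retraction $\Gm\to\pt$; hence the fiber is a retract of $F(\Gm\times U)$, connective whenever $F$ is. Consequently, the essential image of $\Sigma^{\infty}_{S^1}$ is stable under $\Omega^{\gpfr}_{\Gm}$, and since $\Omega^{\infty}_{S^1}$ commutes with $\Omega^{\gpfr}_{\Gm}$ (both right adjoints), the coreflection of (0) yields $\Sigma^{\infty}_{S^1}\Omega^{\gpfr}_{\Gm}F \simeq \Omega^{\gpfr}_{\Gm}\Sigma^{\infty}_{S^1}F$. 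For (1b), $\Sigma^{\infty}_{S^1}$ preserves all colimits, hence $\et$-equivalences. For (1c), if $\Sigma^{\infty}_{S^1}(f)$ is an $\et$-equivalence then so is $\Omega^{\infty}_{S^1}\Sigma^{\infty}_{S^1}(f)\simeq f$, using the preservation of $\et$-equivalences by $\Omega^{\infty}_{S^1}$ via the same t-exactness argument.

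The main obstacle I expect is the commutation with $\Omega^{\gpfr}_{\Gm}$ in (1a): a priori $\Omega_{\Gm}$ on presheaves of spectra can produce objects with nonzero negative homotopy, taking one out of the fully faithful image of $\Sigma^{\infty}_{S^1}$. What rescues the argument is precisely the splitness of $\pt\to\Gm$, without which the essential image would fail to be closed under $\Omega^{\gpfr}_{\Gm}$. A secondary technicality is checking that $L_\et$ remains t-exact on the $\gpfr$-side when combined with $l$-completion and $\Ao$-localization, so that (0) genuinely descends to the fully completed, localized setting; this follows from the strict $\Ao$-invariance of \Cref{th:SHI:ZPShfrA}.
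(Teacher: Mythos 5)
Your proposal is correct and runs along the same lines as the paper, but there are two places worth flagging. For point (0), your base-case identification $\Spc^{\gpfr}_{\Sigma}\simeq\Func^{\times}((\Corr^{\gpfr})^{\mathrm{op}},\Spt_{\geq 0})$ versus $\Spt^{\gpfr}_\Sigma\simeq\Func^{\times}((\Corr^{\gpfr})^{\mathrm{op}},\Spt)$ is essentially the paper's argument; but your plan to then propagate the coreflection through each $(*,\star)$ by checking that both $\Sigma^{\infty}_{S^1}$ and $\Omega^{\infty}_{S^1}$ respect every relevant class of equivalences is more work than the paper does. The paper treats all $(*,\star)$ uniformly: it observes that $\Spt^{\gpfr,*}_\star(\Sm_S)\simeq\operatorname{Stab}(\Spc^{\gpfr,*}_\star(\Sm_S))$ and that $\Spc^{\gpfr,*}_\star(\Sm_S)$ is additive in each case, so the stabilization adjunction is a coreflection by a single cited result (\cite[Corollary 2.1.5]{zbMATH07614298}). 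Your route works but demands verifying, e.g., that $\Omega^{\infty}_{S^1}$ preserves $\Ao$-equivalences in the localized settings, which is not trivial. For (1a), your treatment of $\Omega^{\gpfr}_{\Gm}$ is where you add genuine value: the paper's one-line justification (``smash-products commute with fibers'') is only literally true in a stable category, yet the relevant fibers live in the prestable $\Spc^{\gpfr}_{\Sigma}$; the compatibility only works because the unit section $\pt\to\Gm$ is split by the structure map $\Gm\to\pt$, so $\fib\bigl(F(\Gm\times U)\to F(U)\bigr)$ is a retract of $F(\Gm\times U)$ and hence connective. You surface this geometric input explicitly, which is the step that actually keeps $\Omega^{\gpfr}_{\Gm}$ inside the essential image of $\Sigma^{\infty,\gpfr}_{S^1}$. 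The remaining claims (1a for $\calL^{\gpfr}_{\Ao}$ and $\Sigma^{\gpfr}_{\Gm}$, and (1b), (1c)) match the paper's Steps 1--2, with your (1c) argument via $\Omega^{\infty}_{S^1}\Sigma^{\infty}_{S^1}\simeq\id$ being a slightly cleaner restatement of ``fully faithful, hence conservative.''
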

\begin{proof}
(Claim 0) 
The claim follows
because of the equivalences
\[
\Spt^{\gpfr,*}_\star(\Sm_S)=
(\operatorname{Stab}\Spc)^{\gpfr,*}_\star(\Sm_S)\simeq
\operatorname{Stab}(\Spc^{\gpfr,*}_\star(\Sm_S))
,\]
and
the coreflection
\[
\Spc^{\gpfr,*}_\star(\Sm_S)
\leftadjright
\operatorname{Stab}(\Spc^{\gpfr,*}_\star(\Sm_S))
,\]
provided by
the categories $\Spt^{\gpfr,*}_\star(\Sm_S)$
are additive and \cite[Corollary 2.1.5]{zbMATH07614298}.

We proceed with
(Claim 1).\\
(Step 1)
    Since the functors
    $\infSigmaSfrgp$, $\infOmegaSfrgp$
    act schemewise 
    and commute with filtered colimits
    they preserve
    $\Ao$-invariant objects and
    hypercomplete \'etale equivalences.
    This
    confirms (1b).
\\
(Step 2)
    Since the functors
    $\infSigmaSfrgp$, $\infOmegaSfrgp$
    are 
    fully faithful,
    they detect
    $\Ao$-invariant objects and
    are conservative with respect to
    hypercomplete \'etale equivalences.
    This
    confirms (1c).
\\ 
(1a for $\calL_{\Ao}$)
%
    $\infSigmaSfrgp$
    preserves $\Ao$-equivalences 
    since $\Ao$-equivalences are generated by the images of the morphisms $X\times\Ao\to X$, $X\in\Sm_S$.
    Hence by (Step 1)
    $\infSigmaSfrgp$
    commutes with $\calL_{\Ao}$.
\\    
(1a for $\Sigma_{\Gm}$)
        $\Sigma^{1,\gpfr}_{S^1}$
        commutes with $\Sigma_{\Gm}$,
        because smash-products commute with smash-products.
    Hence
    $\infSigmaSfrgp$
    commutes with $\Sigma_{\Gm}$.
\\
(1a for $\Omega_{\Gm}$)
        $\Sigma^{1,\gpfr}_{S^1}$ commutes with 
        $\Omega_{\Gm}$,
        because it is a schemewise smash-product, 
        while
        smash-products commute with fibers.
    Hence
    $\infSigmaSfrgp$
    commutes with $\Omega_{\Gm}$.
\end{proof}

\begin{lemma}[\protect{\cite[Lemma 15.1]{druzhinin2023trivial}}]
\label{lm:LreptLprepcpreserveexact}
Given 
a presentable $\infty$-category $\mathcal C$ with reflective subcategories 
$\mathcal C_{\alpha}$, 
$\mathcal C_{\beta}$,
$\mathcal C_{\alpha\beta}=\mathcal C_{\alpha}\cap\mathcal C_{\beta}$.
The following conditions are equivalent:
\begin{itemize}
\item[(1)] 
The canonical morphism $\Lrep_{\beta}\Lrep_{\alpha}\to\Lrep_{\beta\alpha}$ is an isomorphism.
\item[(2)] $\calL_{\beta}$ preserves $\mathcal C_{\alpha}$.
\item[(3)] $\calL_{\alpha}$ preserves $\beta$-equivalences.
\end{itemize}
Here
$\Lrep_{\alpha}$ denotes the corresponding localization endofunctor on 
$\mathcal C$ taking values in $\mathcal C_{\alpha}$, and
$\alpha$-equivalence in $\mathcal C$ is a morphism that maps to an equivalence under $\calL_{\alpha}$,
and similarly for $\beta$.
\end{lemma}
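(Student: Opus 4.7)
The plan is to prove the equivalence via the cycle (1)$\Rightarrow$(2), (2)$\Rightarrow$(1), (1)$\Rightarrow$(3), (3)$\Rightarrow$(1). The key observations used throughout are that $\mathcal C_{\alpha\beta}=\mathcal C_\alpha\cap\mathcal C_\beta$ is contained in both $\mathcal C_\alpha$ and $\mathcal C_\beta$, so any $\alpha$- or $\beta$-equivalence is automatically an $\alpha\beta$-equivalence, and that in a presentable $\infty$-category the strongly saturated class of $\alpha\beta$-equivalences is generated by the union of the $\alpha$- and $\beta$-equivalences, since these three classes of local equivalences all determine the same subcategory of local objects $\mathcal C_{\alpha\beta}$.

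The implication (1)$\Rightarrow$(2) is immediate by evaluating the equivalence $\calL_\beta\calL_\alpha\simeq\calL_{\beta\alpha}$ at $X\in\mathcal C_\alpha$: since $\calL_\alpha X\simeq X$, one gets $\calL_\beta X\simeq\calL_{\beta\alpha}X\in\mathcal C_{\alpha\beta}\subset\mathcal C_\alpha$. For (2)$\Rightarrow$(1), the object $\calL_\beta\calL_\alpha X$ lies in $\mathcal C_\beta$ by construction and in $\mathcal C_\alpha$ by (2) applied to $\calL_\alpha X$, so it lies in $\mathcal C_{\alpha\beta}$; its universal property as the $\calL_{\beta\alpha}$-localization of $X$ is then verified by two successive adjunctions: for any $Y\in\mathcal C_{\alpha\beta}$,
\[
\mathrm{Map}(\calL_\beta\calL_\alpha X,Y)\simeq\mathrm{Map}(\calL_\alpha X,Y)\simeq\mathrm{Map}(X,Y),
\]
using in turn that $Y$ is $\beta$-local and $\alpha$-local. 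The resulting equivalence matches the canonical comparison morphism from (1).

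For (1)$\Rightarrow$(3), any $\beta$-equivalence $f$ is in particular an $\alpha\beta$-equivalence, hence $\calL_\beta\calL_\alpha f\simeq\calL_{\beta\alpha}f$ is an equivalence by (1), which by definition means $\calL_\alpha f$ is a $\beta$-equivalence. The crux is (3)$\Rightarrow$(1): under (3) the composite $\calL_\beta\calL_\alpha$ inverts $\alpha$-equivalences (because $\calL_\alpha$ does) and also $\beta$-equivalences (because $\calL_\alpha$ takes them to $\beta$-equivalences by (3), which are in turn inverted by $\calL_\beta$), hence it inverts the whole strongly saturated class of $\alpha\beta$-equivalences. The universal property of $L_{\beta\alpha}\colon\mathcal C\to\mathcal C_{\alpha\beta}$ thus produces a unique factorization $\calL_\beta\calL_\alpha\simeq\bar F\circ L_{\beta\alpha}$ for some $\bar F\colon\mathcal C_{\alpha\beta}\to\mathcal C$. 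Evaluating on any $Y\in\mathcal C_{\alpha\beta}$ gives $\bar F(Y)\simeq\calL_\beta\calL_\alpha Y\simeq Y$, so $\bar F$ is the canonical inclusion and $\calL_\beta\calL_\alpha\simeq\calL_{\beta\alpha}$, again compatibly with the canonical comparison.

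The main obstacle is the last implication: one must justify both that $\calL_\beta\calL_\alpha$ inverts the full saturated class of $\alpha\beta$-equivalences under (3), and that the factorization produced by the universal property agrees with the canonical comparison morphism of (1). Both points are standard consequences of the theory of accessible localizations in presentable $\infty$-categories, but they require a small amount of care with the identification of natural transformations.
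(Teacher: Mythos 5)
The paper does not supply its own proof of this lemma (it is cited from an external source), so I assess your proposal on its own merits. Your implications $(1)\Leftrightarrow(2)$ and $(1)\Rightarrow(3)$ are correct and clean. The gap is in $(3)\Rightarrow(1)$, and it is more than the ``small amount of care'' you acknowledge: you claim that because $\calL_\beta\calL_\alpha$ inverts the $\alpha$-equivalences and the $\beta$-equivalences, it inverts the entire strongly saturated class $W_{\alpha\beta}$ that they generate. This inference needs the class of $\calL_\beta\calL_\alpha$-equivalences to be strongly saturated (closed under pushouts and transfinite compositions), which in turn would follow if $\calL_\beta\calL_\alpha$ preserved colimits. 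But $\calL_\beta\calL_\alpha=E_\beta L_\beta E_\alpha L_\alpha$, and the fully faithful embeddings $E_\alpha,E_\beta$ are right adjoints; they do not preserve colimits in general, so neither does $\calL_\beta\calL_\alpha$. Consequently you have no control over whether $\calL_\beta\calL_\alpha$ inverts morphisms in $W_{\alpha\beta}$ obtained from the generators by pushout or transfinite composition, and the factorization through $L_{\beta\alpha}$ that you invoke is not available.

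A correct route for $(3)\Rightarrow(1)$ (or, more directly, $(3)\Rightarrow(2)$, after which your $(2)\Rightarrow(1)$ closes the cycle) avoids any appeal to strong saturation. For $X\in\mathcal C_\alpha$, let $\eta\colon X\to\calL_\beta X$ be the $\calL_\beta$-unit, a $\beta$-equivalence. Hypothesis (3) makes $\calL_\alpha\eta\colon\calL_\alpha X\to\calL_\alpha\calL_\beta X$ a $\beta$-equivalence, and $\calL_\alpha X\simeq X$. By naturality of the $\calL_\alpha$-unit and 2-out-of-3, the unit $u\colon\calL_\beta X\to\calL_\alpha\calL_\beta X$ is a $\beta$-equivalence, and of course it is also an $\alpha$-equivalence. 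Since $\calL_\beta X$ is $\beta$-local and $u$ is a $\beta$-equivalence, there is a unique retraction $r$ with $r\circ u\simeq\mathrm{id}$; since $\calL_\alpha\calL_\beta X$ is $\alpha$-local and $u$ is an $\alpha$-equivalence, the map $u^*\colon\mathrm{Map}(\calL_\alpha\calL_\beta X,\calL_\alpha\calL_\beta X)\to\mathrm{Map}(\calL_\beta X,\calL_\alpha\calL_\beta X)$ is an equivalence and sends both $u\circ r$ and $\mathrm{id}$ to $u$, forcing $u\circ r\simeq\mathrm{id}$. Hence $u$ is an equivalence and $\calL_\beta X\in\mathcal C_\alpha$, establishing (2); combining with $X\to\calL_\alpha X\to\calL_\beta\calL_\alpha X$ being an $\alpha\beta$-equivalence (each factor is) then recovers (1). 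This retraction-plus-locality argument is the missing idea.
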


\begin{corollary}[Hypercomplete \'etale motivic localization]\label{cor:LA1et}
    Let $k$ be a field.
    For any $F\in\Spcfrgp_{\Sigma,\compln}(\Sm_k)$, 
    there is a natural equivalence 
    \begin{equation}\label{eq:LgpfrAetFsimeqLgpfretLgpfeAF}
        \calL^{\gpfr}_{\Ao,\et}(F)\simeq \calL^{\gpfr}_{\et}\calL^\gpfr_{\Ao}(F).
    \end{equation}
\end{corollary}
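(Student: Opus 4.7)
The plan is to transfer the corresponding spectrum-level statement, \Cref{th:LA1et}, along the coreflection $\Sigma^{\infty,\gpfr}_{S^1}\dashv\Omega^{\infty,\gpfr}_{S^1}$ of \Cref{th:SCancellation}(0). The organising tool is \Cref{lm:LreptLprepcpreserveexact}, applied with $\alpha=\Ao$ and $\beta=\et$: to establish \eqref{eq:LgpfrAetFsimeqLgpfretLgpfeAF} it suffices to verify one of its three equivalent conditions, and I would choose condition (3), namely that $\calL^{\gpfr}_{\Ao}$ preserves $\et$-equivalences on $\Spcfrgp_{\Sigma,\compln}(\Sm_k)$.

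To verify (3), let $f\colon F\to G$ be an $\et$-equivalence in $\Spcfrgp_{\Sigma,\compln}(\Sm_k)$; I need to show that $\calL^{\gpfr}_{\Ao}f$ is again an $\et$-equivalence. By \Cref{th:SCancellation}(1c), $\Sigma^{\infty,\gpfr}_{S^1}$ is conservative with respect to $\et$-equivalences, so it suffices to check this after applying $\Sigma^{\infty,\gpfr}_{S^1}$. Using the commutation $\Sigma^{\infty,\gpfr}_{S^1}\calL^{\gpfr}_{\Ao}\simeq\calL^{\fr}_{\Ao}\Sigma^{\infty,\gpfr}_{S^1}$ from \Cref{th:SCancellation}(1a), this reduces to showing that $\calL^{\fr}_{\Ao}\Sigma^{\infty,\gpfr}_{S^1}f$ is an $\et$-equivalence in $\SptShatnfrSigma(\Sm_k)$.

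Now I invoke the two remaining inputs. By \Cref{th:SCancellation}(1b), $\Sigma^{\infty,\gpfr}_{S^1}f$ is itself an $\et$-equivalence in $\SptShatnfrSigma(\Sm_k)$. On the other hand, \Cref{th:LA1et} gives the spectrum-level identity $\calL^{\fr}_{\Ao,\et}\simeq\calL^{\fr}_{\et}\calL^{\fr}_{\Ao}$, which by another application of \Cref{lm:LreptLprepcpreserveexact} (now at the spectral level) is equivalent to $\calL^{\fr}_{\Ao}$ preserving $\et$-equivalences. Combining these two facts yields that $\calL^{\fr}_{\Ao}\Sigma^{\infty,\gpfr}_{S^1}f$ is an $\et$-equivalence, completing the argument.

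The step I expect to require the most care is the commutation $\Sigma^{\infty,\gpfr}_{S^1}\calL^{\gpfr}_{\Ao}\simeq\calL^{\fr}_{\Ao}\Sigma^{\infty,\gpfr}_{S^1}$: although \Cref{th:SCancellation}(1a) asserts this, one should check that it is compatible with the $\hat{l}$-completion decoration carried by $\Spcfrgp_{\Sigma,\compln}$ versus $\SptShatnfrSigma$. Since $l$-completion is a symmetric monoidal localisation and $\Sigma^{\infty,\gpfr}_{S^1}$ preserves colimits and thus $l$-completions, this is formal but worth recording. Apart from this bookkeeping, every step is a direct citation of \Cref{th:SCancellation}, \Cref{th:LA1et}, or \Cref{lm:LreptLprepcpreserveexact}.
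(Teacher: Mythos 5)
Your proof is correct and takes essentially the same approach as the paper's: both invoke \Cref{lm:LreptLprepcpreserveexact} to reduce \eqref{eq:LgpfrAetFsimeqLgpfretLgpfeAF} to the statement that $\calL^{\gpfr}_{\Ao}$ preserves $\et$-equivalences, and both transfer this from the spectral statement of \Cref{th:LA1et} using the commutation, exactness and conservativity properties of $\Sigma^{\infty,\gpfr}_{S^1}$ from \Cref{th:SCancellation}(1). Your extra remark about $\hat{l}$-completion compatibility is a real (if minor) piece of bookkeeping that the paper also leaves implicit.
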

\begin{proof}
    The claim is equivalent to the fact that
    $\calL^{\fr}_{\Ao}$ in \eqref{eq:LfrAetFsimeqLfretLfeAF}
    preserves 
    h. \'et. l. equivalences 
    by \Cref{lm:LreptLprepcpreserveexact}.
    So the claim follows from
    \Cref{th:LA1et},
    because 
    $\Sigma^{\infty,\gpfr}_{S^1}$
    commutes with $\calL^\fr_{\Ao}$
    and
    is exact and conservative with respect to  
    h. \'et. l. equivalences
    by
    \Cref{th:SCancellation}(1).
\end{proof}

\subsection{Motivic cancellation}\label{sect:complnhypetA1frCancellation}


Let $S$ be a scheme. 

\begin{lemma}\label{lm:commute:MonoidalStructure_fr}
    The functor 
    \[\gamma^*\colon\SptS_{\et}(\Sm_S)\to \SptSfr_{\et}(\Sm_S)\]
    preserves the symmetric monoidal structure $\wedge$,
    in particular, 
    \[\gamma^*({\mathbf 1}_k)\simeq {\mathbf 1}^\fr_k, \quad \gamma^*(F)\wedge \gamma^*(E)\simeq \gamma^*(F\wedge E).\]
    The same applies to the functor
    $\gamma^*\colon\SptHZn_{\Ao,\et}(\Sm_S)\to \SptHZnfr_{\Ao,\et}(\Sm_S)$.
\end{lemma}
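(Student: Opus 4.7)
\medskip

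\noindent\textbf{Proof proposal.}
The plan is to deduce the lemma from the fact that the functor
$\gamma\colon \Sm_S \to \Corr^\fr(\Sm_S)$ is itself symmetric monoidal, so that its left Kan extension along the Yoneda embedding is symmetric monoidal for the Day convolution, and then to check that this symmetric monoidal structure is preserved by each of the localizations and stabilizations that build up the target categories appearing in the statement.

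First, I would recall from \cite[\S 3]{elmanto2021motivic} that $\Corr^\fr(\Sm_S)$ admits a cartesian-like symmetric monoidal structure (induced by the product of schemes and of framings) for which $\gamma$ is symmetric monoidal. Passing to presheaves with Day convolution, the induced left Kan extension
\[
\gamma^* \colon \calP(\Sm_S) \to \calP^\fr(\Sm_S) = \calP(\Corr^\fr(\Sm_S))
\]
is symmetric monoidal, by the universal property of Day convolution; in particular it sends the unit to the unit, i.e.\ $\gamma^*(\mathbf 1) \simeq \mathbf 1^\fr$, and it preserves tensor products up to canonical equivalence on representables, and hence on all presheaves by colimit preservation.

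Next, I would propagate the symmetric monoidal property through the localizations building $\SptS_{\et}(\Sm_S)$ and $\SptSfr_{\et}(\Sm_S)$. By \Cref{lm:Letgammads}(3) together with the discussion at the beginning of \Cref{sect:Lfret} (saturation and compatibility of the relevant classes of equivalences with products), the adjunction $\gamma^*\dashv\gamma_*$ descends to $\Spc_{\et}(\Sm_S) \leftrightarrows \Spc^\fr_{\et}(\Sm_S)$, the hypercomplete \'etale localization on each side is a symmetric monoidal Bousfield localization (since $X\times(-)$ preserves hypercomplete \'etale equivalences on $\Sm_S$ and on $\Corr^\fr(\Sm_S)$), and similarly for the pointed and $S^1$-stabilized variants. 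Stabilization with respect to $S^1$ is a symmetric monoidal operation; $\HZ/l$-linearization is as well, since $\HZ/l$ is an $\mathbb E_\infty$-ring and one simply applies $-\otimes \HZ/l$. Composing these symmetric monoidal localizations with the symmetric monoidal $\gamma^*$ at the presheaf level yields the desired statement in $\SptS_{\et}$ and, by the same argument with the additional $\Ao$-localization, in $\SptHZ/l_{\Ao,\et}$.

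The only non-formal point is the compatibility of hypercomplete \'etale sheafification with the monoidal structure on the framed side; this I would phrase as the assertion that the class of $\et$-equivalences in $\calP^\fr_\Sigma(\Sm_S)$ is stable under $\gamma^*(h(X))\wedge(-)$ for $X\in\Sm_S$, which is immediate once one writes $\gamma^*(h(X))\wedge\gamma^*(G)\simeq \gamma^*(h(X)\wedge G)$ for $G$ representable and uses that $h(X)\times(-)$ preserves hypercomplete \'etale sieves, combined with \Cref{lm:Letgammads}(1) to commute $\calL_\et$ across $\gamma_*$. The resulting equivalence $\gamma^*(F\wedge E)\simeq \gamma^*(F)\wedge\gamma^*(E)$ then extends to all $F,E$ by the colimit and stability properties of each localization, giving the lemma in both advertised forms.
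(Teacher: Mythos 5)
Your proposal is correct and is essentially the same argument the paper gives, just unwound step by step. The paper's proof is more compressed: it simply observes that the symmetric monoidal structures on both $\SptS_{\et}(\Sm_S)$ and $\SptSfr_{\et}(\Sm_S)$ are Day convolutions induced by $\times_S$ on $\Sm_S$ via left Kan extension along the functors $M\colon\Sm_S\to\SptS_{\et}(\Sm_S)$ and $M^\fr\colon\Sm_S\to\SptSfr_{\et}(\Sm_S)$, and then concludes from $\gamma^*M\simeq M^\fr$; the claim about Day convolution already encodes the compatibility of the localizations/stabilizations with the tensor product, which you make explicit. One small remark: for the assertion that $\gamma^*$ preserves $\et$-equivalences, the sharper reference is \Cref{lm:gammauslsAeteq} rather than \Cref{lm:Letgammads}(3), though in practice they cover the same ground.
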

\begin{proof}
    The symmetric monoidal structures $\wedge$ on the $\infty$-categories $\SptS_{\et}(\Sm_S)$ and $\SptSfr_{\et}(\Sm_S)$ are
    both given by the Day convolution structure induced by $\times_S$ on $\Sm_S$
    via left Kan extension along the functors
    $M\colon \Sm_S\to \SptS_{\et}(\Sm_S)$ and $M^\fr\colon \Sm_S\to \SptS_{\et}(\Sm_S)$,
    respectively.
    So the claim follows because of the equivalence $\gamma^* M\simeq M^\fr$.
    The claim on $\SptHZn_{\et}(\Sm_S)$ and $\SptHZnfr_{\et}(\Sm_S)$ follows similarly.
\end{proof}

Recall that the equivalence $\PP^1\simeq \bbG_m^{\wedge 1}\wedge S^1$, induces 
the equivalence 
\begin{equation}\label{eq:SptPfrAetSptGfrAetSmS}\SptPfrAet(\Sm_S)\simeq\SptGfrAet(\Sm_S),\end{equation} 
and recall the equivalence
\begin{equation}\label{eq:SptSfrAetSptSgpfrAetSmS}\SptS^\fr(\Sm_S)\simeq\SptS^\frgp(\Sm_S),\end{equation}
provided by the $\infty$-category $\Spt$ is stable.


\begin{theorem}
\label{th:GmCancellation:SfrAet} 
    (a)
    The adjunction of the $\infty$-categories
    \begin{equation}\label{eq:Cancellation:adj:SptSwefrAet}\Sigma_\Sphere\colon 
    \SptSfr_{\Ao,\et,\compln}(\Sm_S)\leftadjright \SptSfr_{\Ao,\et,\compln}(\Sm_S)
    \colon \Omega_\Sphere\end{equation}
    is an equivalence,
    where $\Sphere$ denotes $\bbG_m^{\wedge 1}$ or $\PP^{\wedge 1}$.
    Consequently, the adjunction of the $\infty$-categories
    \[\Sigma^\infty_\Sphere\colon 
    \SptSfr_{\Ao,\et,\compln}(\Sm_S)\rightadjleft \Spt^{\Sphere,\fr}_{\Aet,\compln}(\Sm_S) 
    \colon \Omega^\infty_\Sphere\]
    is an equivalence.

    (b)
    The adjunctions of the $\infty$-categories 
    \[
    \SpcfrgphatnAet(\Sm_S)\stackrel{\simeq}{\leftadjright} \SpcGfrgphatnAet(\Sm_S)
    \leftadjright
    \SptGfrhatnAet(\Sm_S)
    \stackrel{\simeq}{\leftadjright} \SptPhatnfrAet(\Sm_S)
    \]
    are coreflections, and 
    the left and the right ones are equivalences.
\end{theorem}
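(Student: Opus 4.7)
The plan is to deduce the framed cancellation from Bachmann's rigidity equivalence \eqref{eq:intro:SptSAet(k)wedgensimeqSptPAet(k)wedgen} in the non-framed setting, transported to the framed world via the symmetric monoidal functor $\gamma^*$. The underlying observation is that once we know $\bbG_m^{\wedge 1}$ is an invertible object in the non-framed symmetric monoidal $\infty$-category, symmetric monoidality of $\gamma^*$ forces the same on the framed side, and the rest is formal.

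For \textbf{part (a)}, I would first combine $\SptShatnAhypet(\Sm_S)\simeq \SptPhatnAhypet(\Sm_S)$ with the invertibility of $S^1$ in $\Spt$ to conclude that $\bbG_m^{\wedge 1}$ is an invertible object in the symmetric monoidal $\infty$-category $\SptS_{\Ao,\et,\compln}(\Sm_S)$. Next, invoking \Cref{lm:commute:MonoidalStructure_fr} together with the compatibility of $\gamma^*$ with $\calL_\Ao$, $\calL_\et$ and $l$-completion (the first two from \Cref{lm:Letgammads}(3) and \Cref{lm:gammauslsAeteq}, the third from the fact that $\gamma^*$ is symmetric monoidal and hence preserves the distinguished map $n\colon \mathbf 1\to\mathbf 1$), I would obtain a symmetric monoidal functor
\[
\gamma^*\colon \SptS_{\Ao,\et,\compln}(\Sm_S)\to \SptSfr_{\Ao,\et,\compln}(\Sm_S).
\]
Since symmetric monoidal functors carry invertible objects to invertible objects, $\gamma^*(\bbG_m^{\wedge 1})$ is invertible in the framed category, so $\Sigma_{\Gm}$ is an equivalence on $\SptSfr_{\Ao,\et,\compln}(\Sm_S)$. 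The case $\Sphere=\PP^{\wedge 1}$ then follows from $\PP^{\wedge 1}\simeq S^1\wedge\bbG_m^{\wedge 1}$ and the $S^1$-stability of $\SptSfr$. The ``consequently'' statement is automatic: when $\Sphere$ is already invertible on the source, the universal $\Sphere$-stabilization $\Sigma^\infty_\Sphere\dashv \Omega^\infty_\Sphere$ is an equivalence.

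For \textbf{part (b)}, the rightmost equivalence $\SptGfrhatnAet(\Sm_S)\simeq \SptPhatnfrAet(\Sm_S)$ is immediate from $\PP^{\wedge 1}\simeq S^1\wedge \bbG_m^{\wedge 1}$ and $S^1$-stability, see \eqref{eq:SptPfrAetSptGfrAetSmS}. The middle coreflection $\SpcGfrgphatnAet(\Sm_S)\leftadjright\SptGfrhatnAet(\Sm_S)$ is the $S^1$-stabilization coreflection for grouplike framed spaces provided by \Cref{th:SCancellation}(0) applied with $*=\bbG_m^{-1}$ and $\star=(\Ao,\et,\compln)$. For the leftmost equivalence $\SpcfrgphatnAet(\Sm_S)\simeq \SpcGfrgphatnAet(\Sm_S)$, which asserts that $\Sigma_{\Gm}$ is already invertible at the level of grouplike framed spaces, I would use \Cref{th:SCancellation}(1a)–(1c): the functor $\Sigma^{\infty,\gpfr}_{S^1}$ commutes with $\Sigma_{\Gm}$ and $\Omega_{\Gm}$ and is conservative on $\et$-equivalences, so the invertibility of $\Sigma_{\Gm}$ on the $S^1$-stable framed category established in part (a) descends back to grouplike framed spaces.

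The main technical point to check is that $\gamma^*$ really does descend to a symmetric monoidal functor on the $l$-completed $\Ao$-local hypercomplete \'etale framed categories in a way compatible with the identification of $\bbG_m^{\wedge 1}$ on both sides, so that the transport of invertibility is literal and not merely formal. Since $l$-completion (\Cref{def:Completion}) and all intermediate localizations are Bousfield localizations respected by the symmetric monoidal structure, this bookkeeping is straightforward once one traces through \Cref{lm:Letgammads} and \Cref{lm:gammauslsAeteq}. With this in hand, the theorem reduces to the purely formal statement that invertible objects are preserved under symmetric monoidal functors, and the rest is direct.
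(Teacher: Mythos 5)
Your proposal matches the paper's proof of \Cref{th:GmCancellation:SfrAet} step for step: part (a) uses \cite[Theorem 3.1]{bachmann2021remarks} to get invertibility of $\Sphere$ in $\SptS_{\Ao,\et,\compln}(\Sm_S)$ and transports it to $\SptSfr_{\Ao,\et,\compln}(\Sm_S)$ via the symmetric monoidal functor $\gamma^*$ from \Cref{lm:commute:MonoidalStructure_fr}, and part (b) combines (a) with \Cref{th:SCancellation}(0)-(1) and the identification \eqref{eq:SptPfrAetSptGfrAetSmS}. The only cosmetic difference is that you spell out the compatibility bookkeeping for $\gamma^*$ passing through $\calL_\Ao$, $\calL_\et$ and $l$-completion, which the paper leaves implicit.
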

\begin{proof}
    (a)
    Since     
    $
    \SptShatnAhypet(\Sm_S) 
    \simeq 
    \Spt^{\Sphere,\fr}_{\Aet,\compln}(\Sm_S) 
    $, 
    by \cite[Theorem 3.1]{bachmann2021remarks},
    $\Sphere\in\SptSAet(\Sm_S)$ is invertible.
    Hence by \Cref{lm:commute:MonoidalStructure_fr} the motive of $\Sphere$ in $\SptSfrAet(\Sm_S)$ is invertible.
    So the equivalence \eqref{eq:Cancellation:adj:SptSwefrAet} follows.
    \par(b)
    The first equivalence follows from point (a) 
    and \Cref{th:SCancellation}(1).
    The second adjunction is a reflection by \Cref{th:SCancellation}(0).
    The right adjunction is an equivalence by \eqref{eq:SptPfrAetSptGfrAetSmS}.
\end{proof}
\begin{remark}
    A similar proof to point (a) above
    was shown by Bachmann to the second author, however, it was independently discovered by the first author.
\end{remark}

Inspired by the formulation in \cite[Theorem 1.1 and the sentence just before]{bachmann2023notes}, the hyper\'etale motivic infinite loop space theory can be expressed by the next result:

\begin{corollary}
[Hypercomplete \'etale recognition theorem]
Let $k$ be a field.
There is an equivalence of $\infty$-categories
\[
\Spc^{\gpfr}_{\Ao,\et}(\Sm_k)\simeq\SptPAetveff(\Sm_k),
\]
where the right side is the subcategory of very effective spectra \cite{zbMATH06035488}.
\end{corollary}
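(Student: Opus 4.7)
The strategy is to realize the equivalence as a composite of three simpler equivalences, each justified by one of the cancellation or rigidity results developed earlier in \Cref{sect:HypcEtFrMotives}:
\[
\Spc^{\gpfr}_{\Ao,\et}(\Sm_k)
\xrightarrow{\Sigma^{\infty,\gpfr}_{S^1}}
\SptSfrAet(\Sm_k)_{\geq 0}
\xrightarrow{\Sigma^{\infty,\gpfr}_{\Gm}}
\SptPfrAet(\Sm_k)^{\veff}
\xrightarrow{\gamma_*}
\SptPAetveff(\Sm_k),
\]
where the middle two terms denote, respectively, the hyperétale-locally connective framed $S^1$-spectra and the very-effective framed $\PP^1$-spectra.

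For the first arrow, I would invoke \Cref{th:SCancellation}: Claim~(0) shows that $\Sigma^{\infty,\gpfr}_{S^1}$ is the left adjoint of a coreflection --- hence fully faithful --- and Claims~(1b)--(1c) ensure that it preserves and detects hyperétale-local equivalences, so that its essential image is precisely the hyperétale-locally connective part. For the second arrow, I would apply \Cref{th:GmCancellation:SfrAet}, restricted to the very-effective subcategory: $\Gm$-stabilization is fully faithful on connective framed $S^1$-spectra with image the very-effective framed $\Gm$-spectra, and under the identification $\PP^{\wedge 1} \simeq S^1 \wedge \Gm^{\wedge 1}$ this becomes the very-effective framed $\PP^1$-spectra.

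For the third arrow, I would use the framed reconstruction theorem in the hyperétale setting: the forgetful functor $\gamma_*$ restricts to an equivalence on very-effective subcategories. This follows by combining \Cref{th:LA1et} (which ensures $\calL^\fr_{\Ao,\et} \simeq \calL^\fr_\et \calL^\fr_\Ao$ on the framed side), Bachmann's rigidity \eqref{eq:intro:Rigidity:SptPAetSmkSptSAetSmkSptSetEtk}, and the classical Nisnevich framed reconstruction theorem of \cite{elmanto2021motivic}, the key point being that the very-effective subcategory is generated under colimits and extensions by $\Sigma^{\infty,\gpfr}_{\PP^1}$ applied to representables, and both sides of the reconstruction agree on these generators.

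The main obstacle will be handling the compatibility of the very-effective subcategory through the $l$-completion appearing in several intermediate results. While \Cref{th:GmCancellation:SfrAet}(b) is formulated after $l$-completion, the equivalence should descend integrally on the very-effective part: one would combine the conservativity of $l$-completion on connective objects (\Cref{prop:SwnConservativityLinearisation}) with the Nisnevich EHKSY recognition principle, and then exploit the compatibility of $\Omega^{\infty,\gpfr}_{\PP^1}$ with the étale sheafification --- which is precisely the content of the results assembled in \Cref{sect:OmegaPAetRNisetcompln}.
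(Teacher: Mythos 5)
Your decomposition through the framed side — $S^1$-cancellation, then $\Gm$-cancellation, then the reconstruction equivalence — matches the substance of the paper's proof, which organizes the same ingredients (\Cref{th:SCancellation}, \Cref{th:GmCancellation:SfrAet}(b), and the EHKSY reconstruction equivalence transferred to the hyper\'etale localization) into a commutative square of adjunctions. So the core route is essentially the paper's. The reconstruction step ($\gamma_*$) also works integrally as you expect: the paper observes that $\Spt^{\gpfr}(\Sm_k)\to\Spt(\Sm_k)$ preserves and detects h.\ \'et.\ local equivalences by definition, so the Nisnevich equivalence descends.

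The place where your argument has a real gap is the integral descent at the end. You cite \Cref{prop:SwnConservativityLinearisation} as ``conservativity of $l$-completion on connective objects,'' but that proposition is about conservativity of the linearization functors $\Spt_{\geq}\to\SptHZ_{\geq}$ and $\Spt_{\geq,\compln}\to\SptHZn_{\geq}$, not about $l$-completion being conservative — and a single $l$-completion cannot recover an integral statement in any case. What the paper actually does is an arithmetic fracture: it first proves the claim after $\hat{l}$ (for each $l$ invertible in $k$) via the commutative diagram built from \Cref{th:GmCancellation:SfrAet}(b); then it obtains the claim with $\mathbb{Q}$-coefficients from the Nisnevich recognition theorem \cite[Theorem 3.5.14]{elmanto2021motivic} via \Cref{rem:Fr1/p}; finally it observes that the $\hat{p}$-completed categories $\Spc^{\gpfr}_{\Ao,\et,\hat{p}}(\Sm_k)$ and $\Spt^{\PP^{-1}}_{\veff,\sAo,\et,\hat{p}}(\Sm_k)$ are trivial by \cite[Appendix A]{bachmann2021remarks}, and the $\bbZ$-coefficient statement follows. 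You should replace the appeal to \Cref{prop:SwnConservativityLinearisation} with this three-part fracture argument.
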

\begin{proof}
%
    We prove the claim after $l$-completion, 
    using the commutative diagram of 
    adjunctions of $\infty$-categories
    \[
    \xymatrix{
    \SpcAet(\Sm_k)\ar@<0.5ex>[r]\ar@<0.5ex>[d]^{\simeq}&
    \SptPhatnAetveff(\Sm_k)\ar@<0.5ex>[r]\ar@<0.5ex>[l]\ar@<0.5ex>[d]^{\simeq}& 
    \SptPhatnAhypet(\Sm_k)\ar@<0.5ex>[l]\ar@<0.5ex>[d]^{\simeq}
    \\
    \SpcAet(\Sm_k)\ar@<0.5ex>[r]\ar@<0.5ex>[u]&
    \SpcfrgphatnAet(\Sm_k)\ar@<0.5ex>[r]\ar@<0.5ex>[l]\ar@<0.5ex>[u]& 
    \SptPfrgphatnAet(\Sm_k)\ar@<0.5ex>[l]\ar@<0.5ex>[u]
    .
    }
    \]
    Here
    the right side adjunction in 
    the first row is
    a coreflection by definition
    and 
    the right side adjunction in 
    the second row is
    a coreflection by 
    \Cref{th:GmCancellation:SfrAet}(b). 
    The right side vertical adjunction is an equivalence because the 
    functor
    $\SptPfrgpAnis(\Sm_k)\to \SptPANis(\Sm_k)$
    is an equivalence by the reconstruction equivalence \cite[Theorem 3.5.12]{elmanto2021motivic}
    and because it preserves and detects h. \'et. stable motivic equivalences
    since
    $\Spt^{\gpfr}(\Sm_k)\to \SptS(\Sm_k)$
    preserves and detects h. \'et. local equivalences
    by definition.
    Since the left vertical adjunction is an equivalence and the functors from the left to the right in the left side square are essentially surjective,
    the 
    left vertical adjunction is an equivalence.

    The claim with $\mathbb Q$-coefficients follows from 
    the recognition theorem in the Nisnevich setting
    \cite[Theorem 3.5.14]{elmanto2021motivic}
    in view of \Cref{rem:Fr1/p}. 
    Since the $\infty$-categories $\Spc^{\gpfr}_{\Ao,\et,\hat{p}}(\Sm_k)$ and $\Spt^{\PP^{-1}}_{\veff,\sAo,\et,\hat{p}}(\Sm_k)$
    are trivial by \cite[Appendix A]{bachmann2021remarks}, see Notation \ref{notation:kcahark=p} for $p$, the claim with $\bbZ$-coefficients follows.
\end{proof}

\subsection{Motivic $\infty$-loop spaces and homotopy groups}\label{sect:OmegaPAetRNisetcompln}

Let $k$ be a field.

\begin{theorem}
\label{lm:etalefraedmotivicloopsspaces}
Let $\Sphere$ denote $\PP^{\wedge 1}$ or $\bbG_m^{\wedge 1}$.

(a)
The composite of the functors 
\[
\calS^{\gpfr}_\Sigma(\Sm_k)\xrightarrow{\Sigma^{\infty,\gpfr}_{\Sphere,\Aet}} 
\calS^{\Sphere,\gpfr}_{\Aet}(\Sm_k)\xrightarrow{\Omega^{\infty,\gpfr}_{\Sphere,\Aet}} 
\calS^{\gpfr}_\Sigma(\Sm_k),
\]
see Notation \ref{notation:supersubscriptPPGm:short},
where 
$\calS$ is $\Spc$ or $\Spt$, 
satisfies the following natural equivalence
\begin{equation}\label{eq:OmegaSigmaGm_simeq_LetLAgp}
\Omega^{\infty,\gpfr}_{\Sphere,\Aet}\Sigma^{\infty,\gpfr}_{\Sphere,\Aet} F\simeq \calL_\et^\gpfr \calL_{\Ao}^\gpfr F
\end{equation}
for any $F\in \Spc^{\gpfr}_\Sigma(\Sm_k)_\compln$.

(b)
The composite of the functors 
\begin{equation}\label{eq:SpckSptPetAkSpck}
\SptS(\Sm_k)\xrightarrow{\Sigma^\infty_{\Sphere,\Aet,\compln}} 
\SptS^{\Sphere}_{\Aet,\compln}(\Sm_k)\xrightarrow{\Omega^\infty_{\Sphere,\Aet,\compln}}
\SptS(\Sm_k),
\end{equation}
satisfies the following natural equivalence
\[\label{eq:cor:etalefraedmotivesformula}
\Omega^\infty_{\Sphere,\Aet,\compln}\Sigma^\infty_{\Sphere,\Aet,\compln} F\simeq \calL_\et \calL_{\Ao} \gamma_*(\gamma^*(F)^\gp_\compln)
\]
for any $F\in \Spc(\Sm_k)$.

\end{theorem}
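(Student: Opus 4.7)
For part (a), my plan is to unfold the composite and then invoke cancellation. Writing
\[
\Omega^{\infty,\gpfr}_{\Sphere,\Aet}\Sigma^{\infty,\gpfr}_{\Sphere,\Aet}F \;\simeq\; E^{\gpfr}_{\Aet}\,\Omega^{\infty,\gpfr,\Aet}_{\Sphere}\,\Sigma^{\infty,\gpfr,\Aet}_{\Sphere}\,L^{\gpfr}_{\Aet}F,
\]
I would apply \Cref{th:GmCancellation:SfrAet}(a) in the $\Spt$-case (where the inner adjunction is an equivalence after $l$-completion) and \Cref{th:GmCancellation:SfrAet}(b) in the $\Spc$-case (where $\Sigma^{\infty,\gpfr}_{\Sphere}$ is fully faithful as the left adjoint of a coreflection); in either case the inner composite $\Omega^{\infty,\gpfr,\Aet}_{\Sphere}\Sigma^{\infty,\gpfr,\Aet}_{\Sphere}$ is the identity on the $l$-complete object $L^{\gpfr}_{\Aet}F$. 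The display then collapses to $\calL^{\gpfr}_{\Aet}F$, which \Cref{cor:LA1et} identifies with $\calL^{\gpfr}_{\et}\calL^{\gpfr}_{\Ao}F$.

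For part (b), my plan is to reduce to part (a) through the adjunction $\gamma^{*}\dashv\gamma_{*}$. Setting $F':=(\gamma^{*}F)^{\gp}_{\compln}\in\Spc^{\gpfr}_{\Sigma,\compln}(\Sm_k)$ and applying part (a), I obtain $\Omega^{\infty,\gpfr}_{\Sphere,\Aet}\Sigma^{\infty,\gpfr}_{\Sphere,\Aet}F'\simeq \calL^{\gpfr}_{\et}\calL^{\gpfr}_{\Ao}F'$. Applying $\gamma_{*}$ and pushing it past the two localizations via \Cref{lm:Letgammads}(1,2) would yield
\[
\gamma_{*}\Omega^{\infty,\gpfr}_{\Sphere,\Aet}\Sigma^{\infty,\gpfr}_{\Sphere,\Aet}F' \;\simeq\; \calL_{\et}\calL_{\Ao}\gamma_{*}F' \;=\; \calL_{\et}\calL_{\Ao}\gamma_{*}\bigl((\gamma^{*}F)^{\gp}_{\compln}\bigr),
\]
which is precisely the claimed right-hand side.

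The core remaining step, and the principal obstacle, is to match the left-hand sides by establishing
\[
\Omega^{\infty}_{\Sphere,\Aet,\compln}\Sigma^{\infty}_{\Sphere,\Aet,\compln}F \;\simeq\; \gamma_{*}\Omega^{\infty,\gpfr}_{\Sphere,\Aet}\Sigma^{\infty,\gpfr}_{\Sphere,\Aet}F'.
\]
For this I will invoke the hyper\'etale $l$-complete reconstruction equivalence $\gamma_{*}\colon \Spt^{\Sphere,\gpfr}_{\Aet,\compln}(\Sm_k)\xrightarrow{\simeq}\Spt^{\Sphere}_{\Aet,\compln}(\Sm_k)$, which I plan to derive either by localizing the Nisnevich reconstruction theorem of \cite[Theorem 3.5.12]{elmanto2021motivic} or, more transparently, by using Bachmann's rigidity \cite[Theorem 3.1]{bachmann2021remarks} to collapse both sides to $\Spt_{\et,\compln}(\Et_k)$. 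Under this equivalence, \Cref{lm:Letgammads}(3) together with uniqueness of left adjoints forces $\Sigma^{\infty}_{\Sphere,\Aet,\compln}F$ to correspond to $\Sigma^{\infty,\gpfr}_{\Sphere,\Aet,\compln}F'$; then \Cref{lm:Letgammads}(2), iterated to $\Omega^{\infty}_{\Sphere}$, commutes $\Omega^{\infty}$ across $\gamma_{*}$ and closes the identification. Along the way I will use that $\gamma_{*}$, being a right adjoint, commutes with the $l$-completion, which in the prestable presentable setting is computable as a limit, so the $l$-completeness is transported correctly through the argument.
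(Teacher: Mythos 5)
Your proof is correct and takes essentially the same route as the paper's. For (a) you collapse the inner $\Omega^\infty\Sigma^\infty$ via \Cref{th:GmCancellation:SfrAet}(a)/(b) and identify the remaining localization via \Cref{cor:LA1et} (with \Cref{th:LA1et} behind the $\Spt$-case), exactly as the paper does; for (b) you transport through $\gamma_*$ using \Cref{lm:Letgammads}(1)--(3) and the reconstruction equivalence $\Spt^{\Sphere,\gpfr}_{\Ao,\et,\compln}(\Sm_k)\simeq\Spt^{\Sphere}_{\Ao,\et,\compln}(\Sm_k)$, which is precisely what the paper's commutative diagram \eqref{eq:diagramSpckSptPetAkSpck} encodes. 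The one cosmetic difference is that you allow yourself an alternative derivation of that reconstruction equivalence via Bachmann's rigidity, while the paper uses the Nisnevich version of \cite[Theorem 3.5.12]{elmanto2021motivic} pushed forward to the hyper\'etale $l$-complete setting; both are valid and equivalent in strength.
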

\begin{proof}
    (a) 
    The claim for $\calS=\Spc$
    summarises 
    \Cref{th:LA1et} and \Cref{th:GmCancellation:SfrAet}(a), and 
    \Cref{cor:LA1et} and \Cref{th:GmCancellation:SfrAet}(b), 
    respectively.
    The claim for $\calS=\Spt$ follows because the functors $\Spc\to\Spt$ induces an equivalence on \eqref{eq:OmegaSigmaGm_simeq_LetLAgp}.

    (b) 
The claim follows from (a) 
because of the diagram
\begin{equation}\label{eq:diagramSpckSptPetAkSpck}\xymatrix{
\SptS^{\gpfr}_{\Sigma,\compln}(\Sm_k)\ar[r]^{\Sigma^\infty_{\Sphere,\Aet}}& 
\SptSpherefrgphatnAet(\Sm_k)\ar[r]^{\Omega^\infty_{\Sphere,\Aet}}\ar@<1ex>[d]^{\gamma^\gp_*}\ar[rd]|{\text{Lm \ref{lm:Letgammads}(2)}}&
\SptS^{\gpfr}_{\Sigma,\compln}(\Sm_k)\ar@<1ex>[d]^{\gamma^\gp_*}
\\
\SptS_\compln(\Sm_k)\ar[ru]|{\text{Lm \ref{lm:Letgammads}(3)}}\ar[r]^{\Sigma^\infty_{\Sphere,\Aet}}\ar@<1ex>[u]^{\gamma_\gp^*}& 
\SptSpherehatnAhypet(\Sm_k)\ar[r]^{\Omega^\infty_{\Sphere,\Aet}}\ar@<1ex>[u]^{\gamma_\gp^*}&
\SptS_\compln(\Sm_k)
,}\end{equation}
where the middle vertical functors are equivalences
because by
    \cite[Theorem 3.5.12]{elmanto2021motivic}, we have
    \[\SptSphereAet(\Sm_k)\simeq\SptSpherefrAet(\Sm_k)\simeq\SptSpherefrgpAet(\Sm_k).\]
\end{proof}

\begin{theorem}\label{cor:etalefraedmotivesformula:Fr}
(a)
Consider the composite functor
\[
\Omega^\infty_{\PP^1}\Sigma^\infty_{\PP^1}\colon
\Delta^\mathrm{op}\Sm_k\to \SpcAet(\Sm_k)\xrightarrow{\Sigma^\infty_{\PP^1}} \SpcPetA(\Sm_k)\xrightarrow{\Omega^\infty_{\PP^1}} \SpcAet(\Sm_k)\to \Spc(\Sm_k),
\]
see
Notation 
\ref{notation:supersubscriptPPGm:short}.
For any simplicial smooth scheme $Y$ over $k$ and 
for any strictly henselian local essentially smooth $U$ over $k$,
there is an equivalence in $\SptS$
\begin{equation}\label{eq:intro:OmegaGmSigmaPYUcomplnsimeqFrDeltaUSigmaSYgpcompln:strictlochenseesssmschemeU}
(\Omega^\infty_{\PP^1}\Sigma^\infty_{\PP^1}\Sigma^\infty_{S^1} Y)(U)_\compln
\simeq 
(\Fr(\Delta^\bullet_k\times U,\Sigma^\infty_{S^1}Y)^\gp)_\compln,
\end{equation}
where subscript $\compln$ means $l$-completion, see Notation \ref{notation:calPcompln},
and $(\Fr(\Delta^\bullet_k\times U,\Sigma^\infty_{S^1}Y)^\gp)_\compln$ 
is an $\Omega_{S^1}$-spectrum of pointed simplicial sets,
see 
\eqref{eq:FrDeltabullet},
and consequently, there is an equivalence of spaces
\begin{equation}\label{eq:OmegaGmSigmaPYUcomplnsimeqFrDeltaUSigmaSYgpcompln:strictlochenseesssmschemeU}
(\Omega^\infty_{\PP^1}\Sigma^\infty_{\PP^1} Y)(U)_\compln
\xleftarrow{\simeq} 
(\Fr(\Delta^\bullet_k\times U,Y)^\gp)_\compln,\end{equation}
where the left side is the $l$-completion of the group completion of the special Segal's $\Gamma$-space \eqref{eq:KmapstoFrDeltabulletdashKtimesY}. 

(b) Let $\Sphere$ denote $\PP^{\wedge 1}$ or $\bbG_m^{\wedge 1}$.
Consider the composite functor
\[\Omega^\infty_{\Sphere}\Sigma^\infty_{\Sphere}\Sigma^\infty_{S^1}\colon
\Delta^\mathrm{op}\Sm_k\to \SpcAet(\Sm_k)\xrightarrow{\Sigma^\infty_{\Sphere}\Sigma^\infty_{S^1}} \SptShereetA(\Sm_k)\xrightarrow{\Omega^\infty_{\Sphere}} 
\SptSetA(\Sm_k)\to \SptS(\Sm_k),\]
see Notation \ref{notation:supersubscriptPPGm:short}.
For any simplicial smooth scheme $Y$ over $k$ and 
strictly henselian local essentially smooth $U$ over $k$,
there is an equivalence in $\SptS$
\begin{equation}\label{eq:OmegaGmSigmaPSigmaSYUcomplnsimeqFrDeltaUSigmaSYgpcompln:strictlochenseesssmschemeU}
(\Omega^\infty_{\Sphere}\Sigma^\infty_{\Sphere}\Sigma^\infty_{S^1} Y)(U)_\compln
\xleftarrow{\simeq} 
(\Fr(\Delta^\bullet_k\times U,\Sigma^\infty_{S^1}Y)^\gp)_\compln,\end{equation}
where the right side is given by 
an
$\Omega_{S^1}$-spectrum of pointed simplicial sets
in 
the sense of
\cite{zbMATH01698557}.

(c)
There are the natural \'etale local isomorphisms in $\Ab(\Sm_k)$ 
\[\begin{array}{lclclclr}
\pi_{i,0}^{\Ao,\et}(\Sigma^\infty_{\PP^1} Y)_l&\simeq& \pi_i(\Fr(\Delta^\bullet_k\times -,Y))_l,
&\quad&
\pi_{i,0}^{\Ao,\et}(\Sigma^\infty_{\PP^1} Y)_{/l}&\simeq& \pi_i(\Fr(\Delta^\bullet_k\times -,Y))_{/l},
\\
\pi_{0,0}^{\Ao,\et}(\Sigma^\infty_{\PP^1} Y)_l&\simeq& \ZF(\Delta^\bullet_k\times -,Y)_l,
&\quad&
\pi_{0,0}^{\Ao,\et}(\Sigma^\infty_{\PP^1} Y)_{/l}&\simeq& \ZF(\Delta^\bullet_k\times -,Y)_{/l},
\end{array}\]
for each $i\in\bbZ_{> 0}$,
and vanishings for $i\in\bbZ_{<0}$.

\end{theorem}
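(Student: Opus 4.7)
The three parts are tightly interconnected and all flow from \Cref{lm:etalefraedmotivicloopsspaces} once we evaluate at a strictly henselian local essentially smooth $U$ and identify the framed side with the explicit Voevodsky model.

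\emph{Step 1 (applying the main loop-space formula).} I would start with part (b). For a simplicial smooth scheme $Y$, apply \Cref{lm:etalefraedmotivicloopsspaces}(b) to $Y$ (regarded as an object of $\Spc(\Sm_k)$). This gives
\[
(\Omega^\infty_{\Sphere,\Aet,\compln}\Sigma^\infty_{\Sphere,\Aet,\compln}\Sigma^\infty_{S^1}Y)(U)\simeq\bigl(\calL_\et\calL_{\Ao}\gamma_*(\gamma^*(Y)^\gp_\compln)\bigr)(U).
\]
By the reconstruction equivalence $\gamma^*(Y)^\gp \simeq h^\gpfr(Y)$ and the equivalence
\[
L_{\Zar}L_{\Ao}\Fr(-,Y)\xrightarrow{\simeq} L_{\Zar}L_{\Ao}h^\fr(Y)
\]
from \eqref{eq:LZarLAFrYStosimeqLZarLAhfrYS}, together with the $\Gamma$-space model \eqref{eq:KmapstoFrDeltabulletdashKtimesY}, the motivically localized object $\calL_{\Ao}\gamma_*(\gamma^*(Y)^\gp_\compln)$ is represented Nisnevich-locally by the $l$-completed group-completed $\Omega_{S^1}$-spectrum $(\Fr(\Delta^\bullet_k\times -,\Sigma^\infty_{S^1}Y)^\gp)_\compln$.

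\emph{Step 2 (disposing of $\calL_\et$ on strictly henselian stalks).} This is the central step. I would argue that $\calL_\et$ can be dropped when evaluating at a strictly henselian local essentially smooth $U$. The presheaf of homotopy groups of $\calL_{\Ao}\gamma_*(\gamma^*(Y)^\gp_\compln)$ lies in $\ZPShfr_{\Ao,\Sigma}(\Sm_k)$ and is $n^\alpha$-torsion on $l$-completion. By the Rigidity theorem (\Cref{prop:RigidityXhx:ZfrnalphaA:etlocconst}) its associated étale sheaves are locally constant, so by the local-to-global spectral sequence (\Cref{prop:specseq_for_etalepisheaves:perfect:boundedncomplete}) together with the vanishing of higher étale cohomology of a strictly henselian scheme with constructible coefficients, we get
\[
\bigl(\calL_\et\calL_{\Ao}\gamma_*(\gamma^*(Y)^\gp_\compln)\bigr)(U)\simeq\bigl(\calL_{\Ao}\gamma_*(\gamma^*(Y)^\gp_\compln)\bigr)(U)\simeq(\Fr(\Delta^\bullet_k\times U,\Sigma^\infty_{S^1}Y)^\gp)_\compln.
\]
This is the equivalence \eqref{eq:OmegaGmSigmaPSigmaSYUcomplnsimeqFrDeltaUSigmaSYgpcompln:strictlochenseesssmschemeU} claimed in part (b), proving (b). The $\Omega_{S^1}$-spectrum structure on the right is inherited from the special $\Gamma$-space structure on \eqref{eq:KmapstoFrDeltabulletdashKtimesY} after group completion.

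\emph{Step 3 (part (a) from the Spc-level formula).} For part (a), I would run the same argument using \Cref{lm:etalefraedmotivicloopsspaces}(a) in place of (b), which gives the $\gpfr$-level loop-space formula $\Omega^{\infty,\gpfr}_{\PP^1,\Aet}\Sigma^{\infty,\gpfr}_{\PP^1,\Aet}F\simeq \calL^\gpfr_\et\calL^\gpfr_\Ao F$. Applying $\gamma_*$ and the same Rigidity/strict henselization argument gives the $\SptS$-level identity \eqref{eq:intro:OmegaGmSigmaPYUcomplnsimeqFrDeltaUSigmaSYgpcompln:strictlochenseesssmschemeU}; passing to $\Omega^\infty$ of the $\Omega_{S^1}$-spectrum yields the space-level equivalence \eqref{eq:OmegaGmSigmaPYUcomplnsimeqFrDeltaUSigmaSYgpcompln:strictlochenseesssmschemeU}.

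\emph{Step 4 (part (c)).} For (c), note that the étale-local homotopy groups $\pi_{i,0}^{\Ao,\et}(\Sigma^\infty_{\PP^1}Y)$ are by definition the étale sheafifications of $\pi_i$ of the presheaf $(\Omega^\infty_{\PP^1}\Sigma^\infty_{\PP^1}Y)$. Their $l$-torsion and mod-$l$ parts are computed by the cofiber sequence of $F\xrightarrow{n}F$, and by \Cref{prop:complettentoquotient_object} these agree with the corresponding parts computed from the $l$-completion. Combining with (b) (or (a)) at all strictly henselian stalks yields the stated isomorphisms
\[
\pi_{i,0}^{\Ao,\et}(\Sigma^\infty_{\PP^1}Y)_l\simeq\pi_i(\Fr(\Delta^\bullet_k\times -,Y))_l,\qquad \pi_{i,0}^{\Ao,\et}(\Sigma^\infty_{\PP^1}Y)_{/l}\simeq\pi_i(\Fr(\Delta^\bullet_k\times -,Y))_{/l},
\]
while the vanishing for $i<0$ comes from the fact that $(\Fr(\Delta^\bullet_k\times -,Y)^\gp)_\compln$ is a connective $\Omega_{S^1}$-spectrum. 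For $i=0$ one uses that $\pi_0$ of the group-completion of the special $\Gamma$-space is the group-completion of $\ZF(\Delta^\bullet_k\times -,Y)$, identifying the two sides.

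\emph{Main obstacle.} The most delicate point is Step 2: rigorously eliminating $\calL_\et$ from the value at a strictly henselian local essentially smooth $U$. One has to convert the formula, which a priori holds in a derived sense after full étale hypercompletion, into an honest value at $U$, and this requires combining Rigidity with a bounded-above vs. boundedness argument for the spectral sequence, much as in the proof of \Cref{prop:Let(_A)}. The $l$-completion is essential here since Rigidity fails without it.
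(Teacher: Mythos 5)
Your proof follows essentially the same approach as the paper: derive (b) from \Cref{lm:etalefraedmotivicloopsspaces}(b) and the identification \eqref{eq:LZarLAFrYStosimeqLZarLAhfrYS} evaluated at a strictly henselian local $U$, then extract (a) and (c). One inessential detour is the Rigidity invocation in your Step~2: higher \'etale cohomology of a strictly henselian local scheme vanishes for \emph{all} torsion sheaves, not just locally constant/constructible ones, so once convergence of the descent spectral sequence is secured (via \Cref{lm:varprojlimlLettleqlFleftarrowLetFtoLetvarprojlimltleqlF}, as in the proof of \Cref{prop:Let(_A)}), $\calL_\et$ drops out automatically at $U$ without appealing to \Cref{prop:RigidityXhx:ZfrnalphaA:etlocconst}.
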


\begin{proof}
Firstly, we prove (b).
Since
$\Fr(\Delta^\bullet_k\times U,\Sigma^\infty_{S^1}Y)^\gp$
is an $\Omega_{S^1}$-spectrum, see \Cref{sect:FramedNis},
the right side of \eqref{eq:OmegaGmSigmaPSigmaSYUcomplnsimeqFrDeltaUSigmaSYgpcompln:strictlochenseesssmschemeU}
is an $\Omega_{S^1}$-spectrum.
So it is enough to prove 
an equivalence 
\begin{equation*}\label{eq:OmegaGmSigmaPYUcomplnsimeqFrDeltaUSigmaSYcompln:strictlochenseesssmschemeU}
(\Omega^\infty_{\Sphere}\Sigma^\infty_{\Sphere}\Sigma^\infty_{S^1} Y)(U)_\compln
\xleftarrow{\simeq} 
(\Fr(\Delta^\bullet_k\times U,\Sigma^\infty_{S^1}Y))_\compln.
\end{equation*}
in $\Spt$.
The claim follows from
\Cref{lm:etalefraedmotivicloopsspaces}(b), 
because
by \cite[Lemma 2.3.27]{elmanto2021motivic}, see
\eqref{eq:LZarLAFrYStosimeqLZarLAhfrYS},
\begin{equation*}\label{eq:LetLAFrYStosimeqLZarLAhfrYS}
\begin{array}{lcl}
\calL_{\et}\calL_{\Ao}\gamma_*h^\fr(Y)&\simeq& \calL_{\et}\calL_{\Ao}\Fr(-,Y),
\\
\calL_{\et}\calL_{\Ao}\gamma_*(\gamma^*(\Sigma^\infty_{S^1}h(Y)))
&\simeq& 
(\calL_{\et}\calL_{\Ao}\Fr(-,\Sigma^\infty_{S^1}Y))
.\end{array}\end{equation*}

Point (a) follows from (b) for $\Sphere=\PP^{\wedge 1}$.
Point (c) follows from (b) as well.
\end{proof}


\begin{corollary}\label{th:connectivity}
    \par (a) 
    The functors
    \begin{equation}\label{eq:SigmainftySPconnectivity}
    \SptS_{\et,\compln}(\Sm_k)
    \xrightarrow{L^{\et,\compln}_{\Ao}}
    \SptShatnAhypet(\Sm_k)
    \xrightarrow{\Sigma^\infty_\Gm}
    \SptSGhatnAhypet(\Sm_k)
    \end{equation}
    preserve 
    \hetl connective objects.
%
 %
%
    \par (b) 
    There are equivalences of $\infty$-categories
    \begin{equation}\label{eq:Rigidity:connective:SptPAetSmkSptSAetSmkSptSetEtk}
    \xymatrix{
    \Spt^{\bbG_m^{-1}}_{\Ao,\et\geq0,\compln}(\Sm_k)
    \ar[r]^\simeq
    &
    \SptS_{\Ao,\et\geq0,\compln}(\Sm_k)
    \ar[r]^\simeq\ar[d]^\simeq&
    \SptS_{\et\geq0,\compln}(\Et_k)\ar[d]^\simeq
    \\
    &
    \CMon^\gp_{\Ao,\et,\compln}(\Sm_k)
    \ar[r]^\simeq&
    \CMon^\gp_{\et,\compln}(\Et_k)
    }
    \end{equation}
    where the first row consists of the subcategories of \hetl connective objects, see Notation \ref{notation:subscriptgeq} and \ref{notation:subscriptintersect}. 
%
%
%
%
\end{corollary}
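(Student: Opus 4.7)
The two parts of the corollary are tightly linked: part (b) is essentially the combination of part (a) with the rigidity theorems \eqref{eq:intro:Rigidity:SptPAetSmkSptSAetSmkSptSetEtk} cited from \cite{bachmann2021remarks} and the tautological recognition of connective spectra as grouplike $\mathbb{E}_\infty$-monoids. So the real content lies in establishing part (a), for which the framed motives formula of \Cref{cor:etalefraedmotivesformula:Fr}(b) is the only substantive input.

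For part (a), I would treat the two functors separately. The functor $\Sigma^\infty_{\Gm}\colon \SptShatnAhypet(\Sm_k)\to\SptSGhatnAhypet(\Sm_k)$ is an equivalence by Bachmann's hypercomplete \'etale rigidity theorem \eqref{eq:intro:Rigidity:SptPAetSmkSptSAetSmkSptSetEtk}, and the t-structure on the target is defined by transport along this equivalence, so the statement for $\Sigma^\infty_{\Gm}$ is formal. The substantive claim is that $L^{\et,\compln}_{\Ao}$ preserves h.\'et.\,l.\ connective objects. Since h.\'et.\,l.\ connective objects form a subcategory closed under colimits and under $l$-completion, and since $\SptS_{\et,\compln}(\Sm_k)_{\geq 0}$ is generated under colimits by objects of the form $\Sigma^\infty_{S^1} Y_+$ with $Y\in\Sm_k$, it suffices to verify connectivity of $L^{\et,\compln}_{\Ao}\Sigma^\infty_{S^1} Y_+$. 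By \Cref{cor:etalefraedmotivesformula:Fr}(b) applied with $\mathbb S=\bbG_m^{\wedge 1}$, combined with the fact that $\Omega^\infty_{\bbG_m}\Sigma^\infty_{\bbG_m}\simeq\mathrm{id}$ on $\SptShatnAhypet(\Sm_k)$ (again by rigidity), the stalks of this spectrum at strictly henselian local essentially smooth $U$ are equivalent to the $l$-completion of the spectrum associated to the grouplike special Segal $\Gamma$-space $\Fr(\Delta^\bullet_k\times U,\Sigma^\infty_{S^1}Y)^\gp$. The spectrum associated to a grouplike $\Gamma$-space is always connective, and $l$-completion preserves connectivity; thus the stalks are connective, which is exactly h.\'et.\,l.\ connectivity.

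For part (b), I would read off the three equivalences from what is already available. The right-hand horizontal equivalence in the top row and the vertical equivalences are the connective halves of \eqref{eq:intro:Rigidity:SptPAetSmkSptSAetSmkSptSetEtk} and of the general recognition equivalence $\Spt_{\geq 0}(\mathcal C)\simeq \CMon^\gp(\mathcal C)$, valid on any presentable $\infty$-category $\mathcal C$ and passing to $l$-completions and to subcategories of hypersheaves. The left-hand horizontal equivalence in the top row is the connective part of the Bachmann rigidity equivalence $\SptS_{\Ao,\et,\compln}(\Sm_k)\simeq\SptSGhatnAhypet(\Sm_k)$, which indeed restricts to an equivalence on connective parts precisely because $\Sigma^\infty_{\Gm}$ and its inverse preserve the h.\'et.\,l.\ connective subcategories, as established in (a). Finally, commutativity of the bottom square is automatic from naturality of the recognition principle with respect to the equivalence of the top-right square.

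The main obstacle will be ensuring that the stalkwise framed-motives calculation faithfully captures the t-structure on $\SptShatnAhypet(\Sm_k)$, i.e.\ that the hypercomplete \'etale stalks at strictly henselian local essentially smooth schemes form a conservative family detecting h.\'et.\,l.\ connectivity. This should follow from the spectral sequence \Cref{prop:specseq_for_etalepisheaves:perfect:boundedncomplete} used already in the proof of \Cref{prop:Let(_A)}, after a reduction to bounded-above truncations via \Cref{lm:varprojlimlLettleqlFleftarrowLetFtoLetvarprojlimltleqlF} and then letting $l$-completion commute with connective covers on stalks. A secondary minor concern is the compatibility of the two possible definitions of the t-structure on $\SptSGhatnAhypet(\Sm_k)$ (pulled back from the $S^1$-side via $\Sigma^\infty_{\Gm}$ versus directly from the h.\'et.\,l.\ connective generators), but once the $\Sigma^\infty_{\Gm}$ equivalence is used, there is no ambiguity.
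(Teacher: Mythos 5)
Your argument is correct and follows essentially the same route as the paper: both reduce everything to the framed-motives stalk formula of \Cref{cor:etalefraedmotivesformula:Fr}(b) for $L^{\et,\compln}_{\Ao}$, and then read off part (b) from the Bachmann rigidity equivalence together with the connective/grouplike recognition. The only cosmetic difference is your treatment of $\Sigma^\infty_{\Gm}$: you observe directly that the h.\'et.\,l.\ connective part of $\SptSGhatnAhypet(\Sm_k)$ is by construction $\Fog^{-1}(\Spt_{\et\geq 0})$, i.e.\ detected through $\Omega^\infty_{\Gm}$, which is the inverse of $\Sigma^\infty_{\Gm}$ after rigidity, so preservation is tautological; the paper instead factors $\Sigma^\infty_{\Gm}\simeq(\Sigma^\infty_{\Gm}L^{\et,\compln}_{\Ao})\circ E^{\A^1,\et,\compln}_{\et,\compln}$ and checks each factor. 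These are the same observation packaged differently. One small remark: the ``secondary minor concern'' you raise about two competing t-structures on the $\Gm$-bispectrum side is not actually resolved by what you write (``once the equivalence is used there is no ambiguity'' restates the claim); the correct resolution is precisely the remark above, that the notation defines connectivity via $\Omega^\infty_{\Gm}=\Fog_{\Gm}$, which Bachmann rigidity identifies with the inverse of $\Sigma^\infty_{\Gm}$, so the two descriptions coincide. Similarly, the ``main obstacle'' about stalks being a conservative family is already handled in the paper (strict henselizations of smooth schemes are enough points for the \'etale topology, as used in \Cref{lm:Letgammads}(1)), so no additional spectral-sequence argument is actually needed there.
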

\begin{proof}
    (a)
    By 
    \eqref{eq:OmegaGmSigmaPSigmaSYUcomplnsimeqFrDeltaUSigmaSYgpcompln:strictlochenseesssmschemeU}
    any very effective 
    $F\in\SptSGAet(\Sm_k)$ 
    is \hetl connective.
    Hence
    the composite functor $\Sigma^\infty_{\Gm}L^{\et,\compln}_{\Ao}$
    in \eqref{eq:SigmainftySPconnectivity}
    preserves 
    \hetl connective objects.
    Recall the sequence of equivalences
    \begin{equation}\label{eq:Rigidity:SptPAetSmkSptSAetSmkSptSetEtk}
    \SptSGhatnAhypet(\Sm_k)
    \stackrel{\Omega^\infty_\Gm\vdash\Sigma^\infty_\Gm}{\simeq}
    \SptShatnAhypet(\Sm_k)\simeq\SptShatnhypet(\Et_k)
    \end{equation}
    provided by \cite[Theorem 3.1]{bachmann2021remarks}.
    Since the adjunction
    $\Omega^\infty_\Gm\vdash\Sigma^\infty_\Gm$
    is an equivalence, 
    and $\Omega^\infty_\Gm$ preserves \hetl connective objects,
    it follows that 
    $L^{\et,\compln}_{\Ao}$
    in \eqref{eq:SigmainftySPconnectivity}
    preserves 
    \hetl connective objects.
    Recall the 
    adjunction 
    \begin{equation}\label{eq:SptSAetSmSptSetEt}
    L^{\et,\compln}_{\Ao}
    \colon\SptS_{\et,\compln}(\Sm_k)
    \leftadjright\SptShatnAhypet(\Sm_k)\colon
    E^{\A^1,\et,\compln}_{\et,\compln}.
    \end{equation}
    Since \eqref{eq:SptSAetSmSptSetEt} is a reflection,
    it follows that
    \begin{equation}\label{eq:SigmasimeqSigmaLcircE}
    \Sigma^\infty_{\Gm}
    \simeq
    (\Sigma^\infty_{\Gm}L^{\et,\compln}_{\Ao})
    \circ
    (E^{\A^1,\et,\compln}_{\et,\compln}).
    \end{equation}
    As proven above the functor 
    $\Sigma^\infty_{\Gm}L^{\et,\compln}_{\Ao}$ preserves \hetl connective objects.
    The functor
    $E^{\A^1,\et,\compln}_{\et,\compln}$
    preserves \hetl connective objects
    by definition.
    Hence by \eqref{eq:SigmasimeqSigmaLcircE}
    $\Sigma^\infty_{\Gm}$
    preserves \hetl connective objects.

    
    (b)
    Consider the right side equivalence in \eqref{eq:Rigidity:SptPAetSmkSptSAetSmkSptSetEtk} given by the adjunction
    $\Xi^{\et,\hat{l}}_{\A^1,\et,\hat{l}}\dashv\Gamma^{\A^1,\et,\hat{l}}_{\et,\hat{l}}$, see Notation \ref{notation:Pconst}, \eqref{notation:functorsGammas}.
    Since the inverse image functor $\Spt_{\et}(\Et_k)\to\Spt_{\et}(\Sm_k)$ preserves \hetl connective objects, and $L^{\et,\compln}_{\Ao}$ does this 
    as proven above, 
    it follows that
    $\Xi^{\et,\hat{l}}_{\A^1,\et,\hat{l}}$ does this too. 
    Since 
    the functor $E^{\A^1,\et,\compln}_{\et,\compln}$, see \eqref{eq:SptSAetSmSptSetEt},   
    preserves
    \hetl connective objects,
    it follows that $\Gamma^{\A^1,\et,\hat{l}}_{\et,\hat{l}}$ does this.
    So the right side upper equivalence in \eqref{eq:Rigidity:connective:SptPAetSmkSptSAetSmkSptSetEtk} follows.

    Consider the left side equivalence in \eqref{eq:Rigidity:SptPAetSmkSptSAetSmkSptSetEtk} given by the adjuncton
    $\Sigma^\infty_{\Gm}\dashv\Omega^\infty_{\Gm}$. As proven above $\Sigma^\infty_{\Gm}$ preserves \hetl connective objects. Since $\Omega^\infty_{\Gm}$ preserves \hetl connective objects by definition, the left side upper equivalence in \eqref{eq:Rigidity:connective:SptPAetSmkSptSAetSmkSptSetEtk} follows.

    %
    %
    %
    %
%

    The $\infty$-category $\CMon^\gp_{\Ao,\et,\compln}(\Sm_k)$
    is equivalent to the subcategory of $\SptS_{\Ao,\et,\compln}(\Sm_k)$
    generated by the objects of the form $\Sigma^\infty_{S^1}X$, $X\in\Sm_k$, via colimits.
    The latter subcategory is equivalent to 
    $\SptS_{\Ao,\et\geq,\compln}(\Sm_k)$, 
    because the functors $\SptS_{\et,\compln}(\Sm_k)\leftadjright\SptShatnAhypet(\Sm_k)$
    preserve \hetl connective objects by the above.
    Thus 
    the middle vertical equivalence in \eqref{eq:Rigidity:connective:SptPAetSmkSptSAetSmkSptSetEtk}
    holds, 
    and the right side vertical equivalence in \eqref{eq:Rigidity:connective:SptPAetSmkSptSAetSmkSptSetEtk}
    follows similarly.
\end{proof}

\subsection{Comparison results and realization functors}
\label{sect:Appl}
Let $k$ be a field.

\begin{theorem}\label{cor:nisconget}
\par (a) Let $E\in\SpcPhatnAnis(\Sm_k)$ be effective 
and denote by the same symbol its image in $\SpcPhatnAhypet(\Sm_k)$. 
The natural homomorphism of presheaves 
\begin{equation}\label{eq:piA1nissimeqpiA1et}
\pi_{i+j,j}^{\Ao,\nis}(E)
\rightarrow 
\pi_{i+j,j}^{\Ao,\et}(E),
\end{equation}
is an \'etale local isomorphism,
for each $i\in\bbZ$, $j\in\bbZ_{\geq 0}$.
\par (b) 
The following
diagram of 
$\infty$-categories 
is commutative, see Notation \ref{notation:effveff},
\begin{equation}\label{eq:diag:cor:nisconget}
\xymatrix{
\SptSGhatnAniseff(\Sm_k)\ar[d]^{\Omega^\infty_\Gm}
\ar[r]
&
\SptSGhatnAeteff(\Sm_k)
\ar[d]^{\Omega^\infty_\Gm}
\\
\SptS_\nis(\Sm_k)\ar[r]^{L^\nis_\et}
&
\SptS_\et(\Sm_k)
}
\end{equation}
\end{theorem}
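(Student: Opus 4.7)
The plan for \textbf{(a)} is to reduce to the case $E = \Sigma^\infty_{\PP^1} Y_+$ with $Y \in \Sm_k$ and $j = 0$, and then to compare the Nisnevich and hypercomplete étale homotopy presheaves on étale stalks using the two framed motives formulas. An étale local isomorphism may be checked on étale points of $\Sm_k$, that is, on strictly henselian essentially smooth local schemes $U/k$. The category $\SptSGhatnAniseff(\Sm_k)$ is generated under colimits and $S^1$-shifts by objects of the form $\Sigma^{\infty}_{\Gm}\Sigma^\infty_{S^1} Y_+$; combining this with the $\Gm$-cancellation theorem \Cref{th:GmCancellation:SfrAet}, the bigraded twist $\pi^{\Ao,\tau}_{i+j,j}$ for $j\geq 0$ is absorbed by substituting the effective spectrum $E$ with an appropriate $\Sigma^{\infty}_{\PP^1}$-suspension, leaving to check the isomorphism of $\pi^{\Ao,\nis}_{i,0}\to\pi^{\Ao,\et}_{i,0}$ on $\Sigma^{\infty}_{\PP^1}Y$ on étale stalks.

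On a strictly henselian essentially smooth local $U/k$, the Nisnevich framed motives formula \cite[Th. 10.7]{garkusha2018framed}, recalled in \eqref{eq:OmegaGmSigmaPYUsimeqFrDeltaUSigmaSYgp:lochenseesssmschemeU}, gives
\[
(\Omega^\infty_{\PP^1}\Sigma^\infty_{\PP^1} Y)(U) \simeq \Fr(\Delta^\bullet_k\times U, Y)^\gp
\]
since strictly henselian local implies henselian local. The hypercomplete étale framed motives formula \eqref{eq:OmegaGmSigmaPYUcomplnsimeqFrDeltaUSigmaSYgpcompln:strictlochenseesssmschemeU} of \Cref{cor:etalefraedmotivesformula:Fr}(a) gives
\[
(\Omega^\infty_{\PP^1}\Sigma^\infty_{\PP^1} Y)(U)_\compln \simeq (\Fr(\Delta^\bullet_k\times U, Y)^\gp)_\compln.
\]
Since $E$ is $l$-complete throughout, after passing to $\pi_i$ and sheafifying étale-locally, both sides of \eqref{eq:piA1nissimeqpiA1et} compute the same object $\pi_i(\Fr(\Delta^\bullet_k \times U, Y)^\gp)_\compln$ on strictly henselian stalks. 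This yields the étale local isomorphism.

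For \textbf{(b)}, the diagram \eqref{eq:diag:cor:nisconget} commutes if and only if the natural Beck--Chevalley transformation
\[
L^\nis_\et \circ \Omega^\infty_\Gm \longrightarrow \Omega^\infty_\Gm \circ L^{\motpref,\nis,\compln}_\et
\]
is an equivalence on $\SptSGhatnAniseff(\Sm_k)$. By conservativity of homotopy presheaves on $\Spt_\et(\Sm_k)$ together with the fact that $\Omega^\infty_\Gm$ preserves sheafwise homotopy groups in each bidegree $(i, 0)$ coming from shifts of $\Gm$-spectra, it suffices to verify that each induced morphism of presheaves $\pi_{i,j}^{\Ao,\nis}(E)\to \pi_{i,j}^{\Ao,\et}(E)$, for $j\geq 0$, becomes an isomorphism after étale sheafification. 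This is exactly \textbf{(a)}, so \textbf{(b)} follows.

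The main obstacle is not the reduction to suspension spectra per se, but the careful bookkeeping of $l$-completion together with étale sheafification: one must ensure that on the Nisnevich side the stalks on strictly henselian points really are given by the uncompleted framed motive (which is automatic from the Nisnevich formula on henselian stalks), and that the comparison map to the étale stalks lands in the $l$-completion cleanly, so that the $l$-complete hypothesis on $E$ forces agreement between the two sides without further correction terms.
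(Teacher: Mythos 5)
Your proof is correct and follows essentially the same route as the paper's, whose entire proof reads: "Both claims follow because of \eqref{eq:OmegaGmSigmaPYUcomplnsimeqFrDeltaUSigmaSYgpcompln:strictlochenseesssmschemeU} and \eqref{eq:OmegaGmSigmaPYUsimeqFrDeltaUSigmaSYgp:lochenseesssmschemeU}." You have simply unpacked the terse argument (reduce to generators, compare both framed-motives formulas on strictly henselian stalks, feed the stalkwise equivalence back into the $\infty$-categorical square for (b)); the only cosmetic imprecision is invoking the $\Gm$-cancellation theorem to absorb the $j$-twist, which isn't really needed since for $j\geq 0$ one may simply regard $\Gm^{\wedge j}\wedge U_+$ as a pointed simplicial smooth scheme to which the framed-motives formulas apply directly.
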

\begin{proof}
    Both claims follow because of
    \eqref{eq:OmegaGmSigmaPYUcomplnsimeqFrDeltaUSigmaSYgpcompln:strictlochenseesssmschemeU}
    and
    \eqref{eq:OmegaGmSigmaPYUsimeqFrDeltaUSigmaSYgp:lochenseesssmschemeU}.
\end{proof}

Recall the equivalence 
\begin{equation}\label{eq:SpcPAetSptGAetSmS}\SpcPAet(\Sm_S)\simeq\SptGAet(\Sm_S),\end{equation} 
induced by the equivalence $\PP^1\simeq \bbG_m^{\wedge 1}\wedge S^1$.
The following definition generalizes 
\cite[Def. 2.4.6]{haine2023spectral}.
\begin{definition}\label{def:Renisetcompln}
Let $*\in 
\{\PP^{-1},\bbG_m^{-1},S^{-1},\gp\}$.
Using 
\eqref{eq:SpcPAetSptGAetSmS}
\eqref{eq:Rigidity:SptPAcetSmkSptScetEtk}, 
\eqref{eq:intro:Rigidity:SptPAetSmkSptSAetSmkSptSetEtk},
and 
\eqref{eq:Rigidity:connective:SptPAetSmkSptSAetSmkSptSetEtk},
we define the functors
\begin{equation}\label{eq:Rhypethypet}
\Re^{*,\A^1,\hypet,\compln}_{\hypet,\compln}=
(\Pconst^{\hypet,\compln}_{*,\A^1,\hypet,\compln})^{-1}
\end{equation}
of the form
\[
\begin{array}{lcl}
  \Spc^{\PP^{-1}}_{\Ao,\et,\compln}(\Sm_k)&\to & \Spc_{\et,\compln}(\Et_k),\\
  \SptS^{\bbG_m^{-1}}_{\Ao,\et,\compln}(\Sm_k)&\to & \SptS_{\et,\compln}(\Et_k),\\
  \SptS_{\Ao,\et,\compln}(\Sm_k)&\to & \SptS_{\et,\compln}(\Et_k),\\
  \CMon^{\gp}_{\Ao,\et,\compln}(\Sm_k)&\to & \CMon^{\gp}_{\et,\compln}(\Et_k)
\end{array}
\]
respectively.
Then
we 
define 
\[\Re^{*,\A^1,\Nis}_{\hypet,\compln}=\Re^{*,\A^1,\hypet,\compln}_{\hypet,\compln}L^{*,\A^1,\nis}_{\et,\compln}
\]
see \eqref{eq:intro:def_RNishypetcompln}, \eqref{eq:intro:def_RNishypetcompln_CMongp}.
%
%
\end{definition}

The following result extends \cite[Theorem 7.1]{BachmannRigidity} covering grouplike $\infty$-commutative monoids.
\begin{theorem}\label{th:Remonoidadjoint}
Let
$*\in \{\PP^{-1},\bbG_m^{-1},S^{-1},\gp\}$,
and
$\star\in \{\nis,(\et,\compln)\}$.
The functor 
$\Re^{*,\A^1,\star}_{\et,\compln} 
$
from \Cref{def:Renisetcompln}
is 
symmetric monoidal
and 
left adjoint 
to the functor 
$\Pconst^{\et,\compln}_{*,\A^1,\star}$.
\end{theorem}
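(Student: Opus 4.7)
The plan is to reduce the theorem to the hypercomplete étale case $\star = (\et,\compln)$, where $\Re^{*,\A^1,\hypet,\compln}_{\hypet,\compln}$ is defined in \Cref{def:Renisetcompln} as the inverse of $\Pconst^{\hypet,\compln}_{*,\A^1,\hypet,\compln}$, and then to pass to the Nisnevich case by composing with the symmetric monoidal Bousfield localization $L^{*,\A^1,\nis}_{\et,\compln}$. The two claims to verify are symmetric monoidality and the adjunction with $\Pconst^{\et,\compln}_{*,\A^1,\star}$.

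First I would check that $\Pconst^{\hypet,\compln}_{*,\A^1,\hypet,\compln}$ is an equivalence of symmetric monoidal $\infty$-categories in each of the four cases. For $*\in\{\PP^{-1},\bbG_m^{-1},S^{-1}\}$ this is exactly the chain of equivalences \eqref{eq:intro:Rigidity:SptPAetSmkSptSAetSmkSptSetEtk} from \cite[Theorem 3.1]{bachmann2021remarks}, combined with the identification $\SpcPAet\simeq\SptGAet$ in \eqref{eq:SpcPAetSptGAetSmS}. For $*=\gp$, I would invoke \Cref{th:connectivity}(b): the diagram \eqref{eq:Rigidity:connective:SptPAetSmkSptSAetSmkSptSetEtk} identifies $\CMon^\gp_{\Ao,\et,\compln}(\Sm_k)$ with $\SptS_{\Ao,\et\geq 0,\compln}(\Sm_k)$ and $\CMon^\gp_{\et,\compln}(\Et_k)$ with $\SptS_{\et\geq 0,\compln}(\Et_k)$, and shows that $\Pconst^{\hypet,\compln}_{\et,\compln}$ restricts to an equivalence between these connective subcategories, since by \Cref{th:connectivity}(a) the relevant functors preserve h.\'et.l.\ connectivity.

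Second, $\Pconst$ is an inverse image functor along the site morphism $\Et_k\hookrightarrow\Sm_k$, which preserves finite products, so it is symmetric monoidal; its inverse $\Re^{*,\A^1,\hypet,\compln}_{\hypet,\compln}$ is therefore symmetric monoidal as well, and any equivalence of $\infty$-categories is a left adjoint to its inverse. This settles $\star=(\et,\compln)$. For the Nisnevich case, I would use the defining decomposition
\[
\Re^{*,\A^1,\nis}_{\hypet,\compln} \;=\; \Re^{*,\A^1,\hypet,\compln}_{\hypet,\compln}\circ L^{*,\A^1,\nis}_{\et,\compln}.
\]
The functor $L^{*,\A^1,\nis}_{\et,\compln}$ combines $\Ao$-localization, hyper\'etale sheafification, and $l$-completion; each of these is a symmetric monoidal Bousfield localization (for $L^\fr_\et$ this was recorded in \Cref{sect:Lfret}, and the same reasoning applies in the current setting), so their composite is a symmetric monoidal left adjoint to the fully faithful embedding $E^{*,\A^1,\et,\compln}_{\A^1,\nis}$. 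Composing adjunctions and symmetric monoidal functors yields both claims, with $\Pconst^{\et,\compln}_{*,\A^1,\nis}\simeq E^{*,\A^1,\et,\compln}_{\A^1,\nis}\circ\Pconst^{\hypet,\compln}_{*,\A^1,\hypet,\compln}$ (matching the formula from Notation \ref{notation:Pconst}).

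The main subtlety I expect is in the case $*=\gp$: one must verify that the symmetric monoidal structure on $\CMon^\gp_{\et,\compln}(\Et_k)$ induced from the equivalence with $\Spt_{\et\geq 0,\compln}(\Et_k)$ is preserved under the identifications in \eqref{eq:Rigidity:connective:SptPAetSmkSptSAetSmkSptSetEtk}. This reduces to the fact that smash products of h.\'et.l.\ connective spectra remain h.\'et.l.\ connective, together with \Cref{th:connectivity}(a) guaranteeing that $L^{\et,\compln}_{\Ao}$ and $\Sigma^\infty_{\Gm}$ preserve this subcategory, so that the whole rigidity equivalence refines to one of symmetric monoidal $\infty$-categories at the level of connective covers.
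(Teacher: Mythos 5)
Your proposal is correct and follows essentially the same route as the paper's proof: both factor $\Re^{*,\A^1,\nis}_{\hypet,\compln}$ as $\Re^{*,\A^1,\hypet,\compln}_{\hypet,\compln}\circ L^{*,\A^1,\nis}_{\et,\compln}$, obtain symmetric monoidality and the adjunction for the hyperétale component from the equivalence \eqref{eq:Rhypethypet}, and then compose with the symmetric monoidal localization $L^{*,\A^1,\nis}_{\et,\compln}\dashv E^{*,\A^1,\et,\compln}_{\A^1,\nis}$. You spell out the justifications (rigidity equivalences for each $*$, the $*=\gp$ connectivity check) more explicitly than the paper, which treats them as already absorbed into \Cref{def:Renisetcompln} and \Cref{th:connectivity}(b).
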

\begin{proof}
Equivalence \eqref{eq:Rhypethypet}
implies 
that 
$\Re^{*,\A^1,\hypet,\compln}_{\hypet,\compln}$ is symmetric monoidal
and there is an adjunction 
\begin{equation}\label{adj:RehypethatnhypethatndashvPconsthypethatnhypethatn}
\Re^{*,\A^1,\hypet,\compln}_{\hypet,\compln}\dashv (\Pconst^{\et,\compln}_{*,\A^1,\et,\compln})    
.\end{equation}
Since
$L^{*,\A^1,\nis}_{\et,\compln}$
and
$\Re^{*,\A^1,\hypet,\compln}_{\hypet,\compln}$
are symmetric monoidal,
$\Re^{*,\A^1,\Nis}_{\hypet,\compln}$ is symmetric monoidal too.
Finally,
the composite of the adjunctions
$
L^{*,\A^1,\nis}_{\et,\compln}\dashv E_{*,\A^1,\nis}^{\et,\compln}
$
and
\eqref{adj:RehypethatnhypethatndashvPconsthypethatnhypethatn}
is
the required adjunction
$\Re^{*,\A^1,\Nis}_{\hypet,\compln}
\dashv
\Pconst^{\et,\compln}_{*,\A^1,\Nis}$.
\end{proof}

\begin{theorem}\label{th:piAniswnBetenhypetANDLoopspectraandspacesANDRealisationfunctors}
Let $k$ be a field, and $l\in\bbZ$ be invertible in $k$.

(a)
    %
    For any $E\in \SptSGAniseff(\Sm_k)$,
    there is
    a natural equivalence in $\SptShatnhypet(\Et_k)$
    \[\Gamma^{\motpref,\nis}_{\et,\compln}(E)\simeq 
    \Re^{\motpref,\Nis}_{\hypet,\compln}(E),\]
    where
    $\Gamma^{\motpref,\nis}_{\et,\compln}=L^{\et}_{\et,\compln}\Gamma^{\motpref,\nis}_{\et}$, see Notation \ref{notation:functorsGammas}.
(b)
    The following diagram of $\infty$-categories is commutative 
    \begin{equation*}\label{eq:RealisLoopFunctorsCommute}\xymatrix{
    \SpcPAniseff(\Sm_k)\ar[r]^{L^{\Pmotpref,\nis}_{\et,\compln}}\ar[d]^{\Omega^\infty_{\PP^1}}&
    \SpcPhatnAhypet(\Sm_k)\ar[r]^{\Re^{\Pmotpref,\et,\compln}_{\et,\compln}}\ar[d]^{\Omega^\infty_{\PP^1}}&
    \Spt_{\et,\compln}(\Et_k)\ar[d]^{\Omega^\infty_{S^1}}
    \\
    \CMon^\gp_{\Ao,\nis}(\Sm_k)\ar[r]^{L^{\unsmotpref,\nis}_{\et,\compln}}\ar@{-->}[d]|{k=\bbC}&
    \CMon^\gp_{\Ao,\et,\compln}(\Sm_k)\ar[r]^{\Re^{\unsmotpref,\et,\compln}_{\et,\compln}}\ar@{-->}[d]|{k=\bbC}&
    \CMon^\gp_{\et,\compln}(\Et_k)\ar@{-->}[d]|{k=\bbC}
    \\
    %
    \Spc_{\Ao,\nis}(\Sm_\bbC)\ar[r]^{L^{\unsmotpref,\nis}_{\cet}}&
    \Spc_{\Ao,\cet}(\Sm_\bbC)\ar[r]^{Be}&
    \Spc
    ,}\end{equation*}
    where 
    the upper three rows are considered for any field $k$,
    the bottom row is considered for $k=\bbC$, and $Be$ denotes the Betti realization.

\end{theorem}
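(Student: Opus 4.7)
The plan is to reduce both parts to \Cref{cor:nisconget}, the hypercomplete \'etale rigidity theorem \eqref{eq:intro:Rigidity:SptPAetSmkSptSAetSmkSptSetEtk}, and the hypercomplete \'etale recognition/connectivity \Cref{th:connectivity}. Unwinding \Cref{def:Renisetcompln} and Notation \ref{notation:functorsGammas}, both $\Gamma^{\motpref,\nis}_{\et,\compln}$ and $\Re^{\motpref,\Nis}_{\hypet,\compln}$ factor through the étale localization $L^{\motpref,\nis}_{\et,\compln}\colon \SptSGAnis(\Sm_k)\to\SptSGhatnAhypet(\Sm_k)$; they differ only in what follows, the realization inverting $\Pconst^{\et,\compln}_{\motpref,\et,\compln}$ via rigidity, while $\Gamma^{\motpref,\nis}_{\et,\compln}$ forgets the $\bbG_m$-structure to a Nisnevich-local $S^1$-spectrum on $\Sm_k$ before restricting to $\Et_k$ and sheafifying.

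For part (a), I would start with the canonical comparison map and show it is an equivalence on effective $E$. By the rigidity equivalence $\SptSGhatnAhypet(\Sm_k)\simeq\SptShatnhypet(\Et_k)$ of \eqref{eq:intro:Rigidity:SptPAetSmkSptSAetSmkSptSetEtk}, the functor $\Sigma^\infty_\Gm$ is invertible on the right and the value of $\Re^{\motpref,\Nis}_{\hypet,\compln}(E)$ is computed by restricting the $S^1$-spectrum $\Omega^\infty_\Gm L^{\motpref,\nis}_{\et,\compln}(E)$ to $\Et_k$. On the other hand, $\Gamma^{\motpref,\nis}_{\et,\compln}(E)$ is the étale sheafification of $\Omega^\infty_\Gm(E)$ restricted to $\Et_k$. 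The equivalence of these two is precisely the content of \Cref{cor:nisconget}(b), i.e.\ the commutativity of the square asserting that $\Omega^\infty_\Gm$ commutes with hypercomplete étale $l$-completion for effective spectra. Bidegree-wise control of the comparison is provided by \Cref{cor:nisconget}(a).

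For part (b), the upper-right and middle-right squares commute by construction of $\Re^{*,\et,\compln}_{\et,\compln}$ as the inverse of $\Pconst^{\et,\compln}_{*,\et,\compln}$: in the hypercomplete étale $l$-complete setting $\Omega^\infty_{\PP^1}$ corresponds under rigidity \eqref{eq:intro:Rigidity:SptPAetSmkSptSAetSmkSptSetEtk} to $\Omega^\infty_{S^1}$, since $\Sigma^\infty_\Gm$ is an equivalence there; and analogously in the $\CMon^\gp$ row via \Cref{th:connectivity}(b). The upper-left square commutes by \Cref{cor:nisconget}(b), and the middle-left follows from the identification $\Omega^\infty_{\PP^1}\simeq\Omega^\infty_{S^1}\Omega^\infty_\Gm$ together with the recognition principle packaged in \Cref{th:connectivity}(b). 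The bottom row for $k=\bbC$ then follows from the remark after the first theorem statement in the introduction, which identifies $\Re^{*,\A^1,\cet,\compln}_{\et,\compln}$ with the $l$-completion of the Betti realization; this uses $\Spc(\Et_\bbC)\simeq\Spc$ and the adjunction \eqref{eq:ReNishypetcomplndashvPconstcompln}.

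The main obstacle will be the compatibility verifications on the right side of (b), namely that the rigidity equivalence intertwines $\Omega^\infty_{\PP^1}$ on the motivic side with $\Omega^\infty_{S^1}$ on the étale side. This requires care because $\Pconst^{\et,\compln}_{\motpref,\et,\compln}$ is the composite of pullback from $\Et_k$ with the $\bbG_m$-stabilization embedding, so its inverse via rigidity must be read as: take $\Omega^\infty_\Gm$ to land in $\SptShatnAhypet(\Sm_k)$, then identify with $\Spt_{\et,\compln}(\Et_k)$. The passage from an effective Nisnevich-local object to its hypercomplete étale $l$-completion is precisely where the effective hypothesis is unavoidable; without it, $\Omega^\infty_\Gm$ need not commute with $L^{\motpref,\nis}_{\et,\compln}$, and the framed motives formulas of \Cref{cor:etalefraedmotivesformula:Fr} that underpin \Cref{cor:nisconget} would no longer apply.
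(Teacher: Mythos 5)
Your plan is correct and follows the same route as the paper: part (a) reduces to Theorem~\ref{cor:nisconget}(b), the rigidity equivalence \eqref{eq:intro:Rigidity:SptPAetSmkSptSAetSmkSptSetEtk}, and Definition~\ref{def:Renisetcompln}, while part (b) reduces to the same ingredients together with Corollary~\ref{th:connectivity}(b) and the adjunction \eqref{eq:ReNishypetcomplndashvPconstcompln} for the Betti row. The only difference is cosmetic: the paper makes your observation that $\Omega^\infty_{\PP^1}$ factors through $\Omega^\infty_\Gm$ and a connective cover explicit via an intermediate four-row refinement of the diagram (its \eqref{eq:ShvconnectSptPdsquare}), but the sequence of reductions and the lemmas invoked coincide with yours.
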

\begin{proof}
    (a)
    %
    It follows from \Cref{cor:nisconget}(b)
    that there is an equivalence
    $\Gamma^{\motpref,\nis}_{\hypet,\compln} 
    \cong 
    \Gamma^{\motpref,\hypet}_{\hypet,\compln} L^{\motpref,\nis}_{\hypet}
    $. 
    Then
    \[\begin{array}{lcl}
    \Gamma^{\motpref,\nis}_{\hypet,\compln}(E) 
    &\stackrel{\text{Th. \ref{cor:nisconget}(b)}}{\simeq} &
    L^\et_{\et,\compln}\Gamma^{\motpref,\hypet}_\hypet L^{\motpref,\nis}_\hypet(E)\\
    &\stackrel{}{\simeq}& 
    \Gamma^{\motpref,\hypet,\compln}_{\hypet,\compln} L^{\motpref,\nis}_{\hypet,\compln}(E)\\
    &\stackrel{\eqref{eq:Rigidity:SptPAcetSmkSptScetEtk}}{\simeq}& 
    (\Pconst^{\hypet,\compln}_{\motpref,\hypet,\compln})^{-1}L^{\motpref,\nis}_{\hypet,\compln}(E)\\
    &\stackrel{\eqref{eq:Rhypethypet}}{=}&
    \Re^{\motpref,\hypet,\compln}_{\hypet,\compln}L^{\motpref,\nis}_{\hypet,\compln}(E)\\
    &\stackrel{\text{Def. \ref{def:Renisetcompln}}}{=}&
    \Re^{\motpref,\nis,\compln}_{\hypet,\compln}(E).
    \end{array}\]

    (b)
    Denote by
    $\operatorname{Im}_{\SptSAnis(\Sm_k)}(\Omega^\infty_\Gm)$
    the full subcategory in
    $\SptSAnis(\Sm_k)$
    that is an essential image of the functor
    $\Omega^\infty_{\Gm}\colon\SptSGAniseff(\Sm_k)\rightarrow\SptSAnis(\Sm_k).$
    Then the vertical functors between the upper two rows in \eqref{eq:RealisLoopFunctorsCommute} are equivalent to the composite functors 
    in the diagram
    \begin{equation}\label{eq:ShvconnectSptPdsquare}
    \xymatrix{
    \SptSGAniseff(\Sm_k)\ar[r]^{L^{\motpref,\nis}_{\et,\compln}}\ar[d]^{\Omega^\infty_\Gm}&
    \SptSGhatnAhypet(\Sm_k)\ar[r]\ar[d]^{\Omega^\infty_\Gm}_{\simeq}&
    \Spt_{\et,\hat{l}}(\Et_k)\ar[d]_{\simeq}
    \\
    \operatorname{Im}_{\SptSAnis(\Sm_k)}(\Omega^\infty_\Gm)\ar[r]^{L^{\nis}_{\et,\compln}}\ar[d]^{t_{\nis\geq0}}&
    \SptShatnAhypet(\Sm_k)\ar[r]\ar[d]^{t_{\et\geq0}}&
    \SptShatnhypet(\Et_k)\ar[d]^{t_{\et\geq0}}
    \\
    \SptS_{\A^1,\nis\geq0}(\Sm_k)\ar[r]^{L^{\nis}_{\et,\compln}}\ar[d]^{\simeq}&
    \SptS_{\A^1,\et\geq0,\compln}(\Sm_k)\ar[r]\ar[d]^{\simeq}&
    \Spt_{\et\geq 0,\compln}(\Et_k)\ar[d]^{\simeq}
    \\
    \CMon^\gp_{\Ao,\Nis}(\Sm_k)\ar[r]^{L^{\nis}_{\et,\compln}}&
    \CMon^\gp_{\Ao,\et,\compln}(\Sm_k)\ar[r]&
    \CMon^\gp_{\et,\compln}(\Et_k)&
    .}\end{equation}
    Here
    $t_{\nis\geq0}$, $t_{\et\geq0}$,
    $L^{\nis}_{\et,\compln}$
    denote the restrictions of the respective functors
    to the subcategories in the diagram.
    The squares in the left two columns are well defined and commutative by 
    \Cref{cor:nisconget}(b) and
    \Cref{th:connectivity}(b),
    see
    \eqref{eq:diag:cor:nisconget},
    \eqref{eq:Rigidity:connective:SptPAetSmkSptSAetSmkSptSetEtk},
    respectively.
    The commutativity of the upper right square is provided by 
    %
    \cite[Theorem 3.1]{bachmann2021remarks}, see \eqref{eq:Rigidity:SptPAcetSmkSptScetEtk},
    and \Cref{th:connectivity}(b), see \eqref{eq:Rigidity:connective:SptPAetSmkSptSAetSmkSptSetEtk}.
    The commutativity of the other squares 
    holds because
    the direct image along the embedding $\Et_k\to\Sm_k$
    commutes with 
    $t_{\et\geq 0}$ on $\SptS_{\et}(-)$,
    and there is
    the equivalence $\SptS_{\et\geq 0}(-)\simeq\CMon^\gp_\et(-)$.

    Let $k=\bbC$,
    then $\CMon^\gp_{\et,\compln}(\Et_k)\simeq \CMon^\gp_\compln$,
    and 
    there are 
    commutative squares
    \begin{equation}\label{eq:diag:squares:CMobgpAetcomplncetSmkEtk}
    \xymatrix{
    \CMon^\gp_{\Ao,\et,\compln}(\Sm_k)\ar[r]^>>>>>{\Re^{\motpref,\et,\compln}_{\et,\compln}}_{\simeq}&\CMon^\gp_\compln\\
    \CMon^\gp_{\Ao,\cet,\compln}(\Sm_k)\ar[r]^>>>>>{Be}\ar[u]^{L^{\Ao,\cet,\compln}_{\et}}&\CMon^\gp_\compln\ar[u]^{\simeq}
    }
    \quad
    \xymatrix{
    \CMon^\gp_{\Ao,\et,\compln}(\Sm_k)\ar[r]^>>>>>{\Re^{\et,\compln}_{\et,\compln}}_{\simeq}\ar[d]^{E_{\Ao,\et,\compln}^{\cet}}&\CMon^\gp_\compln\ar[d]^{\simeq}\\
    \CMon^\gp_{\Ao,\cet,\compln}(\Sm_k)\ar[r]^>>>>>{Be}&\CMon^\gp_\compln
    .}
    \end{equation}
    Here the commutativity of the left square 
    holds because
    $Be$ is left adjoint to the functor
    $\CMon^\gp_{\compln}\to\CMon^\gp_{\Ao,\cet,\compln}(\Sm_k)$,
    and
    $\Re^{\motpref,\et,\compln}_{\et,\compln}$ is left adjoint to the functor
    $\CMon^\gp_{\compln}\to\CMon^\gp_{\Ao,\et,\compln}(\Sm_k)$ similarly to \eqref{adj:RehypethatnhypethatndashvPconsthypethatnhypethatn}.
    The commutativity of the right square in \eqref{eq:diag:squares:CMobgpAetcomplncetSmkEtk} follows from the left one, and the equivalence of endofunctors $L^{\Ao,\nis}_{\et,\compln}E_{\Ao,\nis}^{\et,\compln}\simeq \mathrm{Id}_{\CMon^\gp_{\Ao,\et,\compln}(\Sm_k)}$. 
    On the other hand,
    the functor $\CMon^\gp_\compln\to\Spc$
    and the mapping $X\mapsto X(\bbC)$
    induce the commutative diagram 
    \[\xymatrix{
    \CMon^\gp_{\Ao,\Nis,\compln}(\Sm_k)\ar[r]\ar[d]&\CMon^\gp_{\Ao,\cet,\compln}(\Sm_k)\ar[r]\ar[d]&\CMon^\gp_\compln\ar[d]\\
    \Spc_{\Ao,\Nis}(\Sm_k)\ar[r]&\Spc_{\Ao,\cet}(\Sm_k)\ar[r]&\Spc
    .}\]
    Thus 
    the commutativity in the bottom two rows of \eqref{eq:RealisLoopFunctorsCommute} follows. 
%
\end{proof}

\appendix
\section{Descent spectral sequence}

Throughout the section $k$ is any field.





\begin{proposition}\label{prop:specseq_for_etalepisheaves:perfect:boundedncomplete}
    

    For any connective object $F$ in 
    $\SptSetleq(\Sm_k)$, see Notation \ref{notation:subscriptleq},
    for any 
    smooth scheme $U$ over $k$,
    there is a natural spectral sequence 
    \begin{equation}\label{eq:sepcseq:HpetUpiqFpimlFU}
    H^{p}_\cet(U,(\pi_q F)_{\cet})
    \Rightarrow
    \pi_{l} F(U) 
    \quad
    l=-p+q,
    \end{equation}
    that converges conditionally.
\end{proposition}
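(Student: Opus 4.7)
The plan is to obtain the spectral sequence as the Bousfield--Kan spectral sequence of the Postnikov tower of $F$ in the hypercomplete \'etale $\infty$-topos $\SptSet(\Sm_k)$. First I would form the tower $\{t^\et_{\leq n}F\}_{n\in\bbZ}$ of hypercomplete \'etale Postnikov truncations, whose successive fibres are the shifted Eilenberg--MacLane hypersheaves
\[
\mathrm{fib}\bigl(t^\et_{\leq n}F\to t^\et_{\leq n-1}F\bigr)\simeq \Sigma^n H\bigl((\pi_n F)_\cet\bigr).
\]
Connectivity of $F$ gives $t^\et_{\leq n}F\simeq 0$ for $n<0$, so the tower is bounded below; the hypothesis $F\in\SptSetleq(\Sm_k)$ ensures that $F$ is \'etale-locally truncated, and hypercompleteness of $\SptSet(\Sm_k)$ yields the convergence $F\simeq \varprojlim_n t^\et_{\leq n}F$.

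Next I would evaluate the tower at a smooth $U$ to obtain a tower of spectra $\{t^\et_{\leq n}F(U)\}_n$ with limit $F(U)$, and apply the standard Bousfield--Kan construction to get a conditionally convergent spectral sequence whose $E_1$-page consists of the homotopy groups of the sections of the layers. The essential identification is that for an abelian \'etale sheaf $A$, the hypercomplete Eilenberg--MacLane object $HA$ computes \'etale cohomology in the sense
\[
\pi_i\bigl(HA(U)\bigr)\simeq H^{-i}_\cet(U,A_\cet),\qquad i\leq 0,
\]
valid since hypercomplete and classical \'etale cohomologies of abelian sheaves agree; see Notation \ref{notation:FtauHXF}. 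Applying this to the layer $\Sigma^q H((\pi_q F)_\cet)$ and reindexing with $l=q-p$ produces the desired $E_2$-page $H^p_\cet(U,(\pi_q F)_\cet)$ abutting to $\pi_l F(U)$.

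The main obstacle to address is simply the convergence, which is ultimately formal. The definition of $\SptSet(\Sm_k)$ as the $\infty$-category of hypercomplete \'etale sheaves of spectra guarantees that Postnikov towers converge by construction, while the connectivity of $F$ makes the tower bounded below; together these ensure that the associated Bousfield--Kan spectral sequence is well-defined and conditionally convergent in the sense of \cite[Definition 5.10]{Schwede} (or any standard reference). Everything else is the formal machinery of the Postnikov spectral sequence in a hypercomplete topos, exactly as in \cite[Proposition 2.13]{ClausenMathewHypdescetaleKth}, to which the argument reduces.
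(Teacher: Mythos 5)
Your argument is correct and takes essentially the same route as the paper, which simply cites \cite[Proposition 2.13]{ClausenMathewHypdescetaleKth} (crediting Thomason); you have written out the standard Postnikov-tower/Bousfield--Kan construction that underlies that reference and explicitly reduce to it at the end. The only minor quibble is the citation of \cite{Schwede} for conditional convergence, which is not what the paper uses that source for and is better attributed to Boardman or another standard reference.
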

\begin{proof}
A modern reference for this claim 
is
\cite[Proposition 2.13]{ClausenMathewHypdescetaleKth}, a result pioneered by Thomason in \cite[Theorem 4.1]{zbMATH04175123}.
\end{proof}


\begin{lemma}\label{lm:varprojlimlLettleqlFleftarrowLetFtoLetvarprojlimltleqlF}
Let $F\in\SptS(\Sm_k)$,
then 
the natural morphisms
\begin{equation}\label{eq:varprojlimlLettleqlFleftarrowLetFtoLetvarprojlimltleqlF}
    \varprojlim_{b}\calL_{\et}t_{\leq b}F\leftarrow \calL_{\et}F\to \calL_{\et}\varprojlim_{l}t_{\leq b}F
\end{equation}
are equivalences
in $\SptS(\Sm_k)$,
where $t_{\leq b}$ is the schemewise homotopy t-structure truncation.
\end{lemma}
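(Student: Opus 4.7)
The plan is to treat the two arrows in \eqref{eq:varprojlimlLettleqlFleftarrowLetFtoLetvarprojlimltleqlF} separately, reducing both to convergence properties of Postnikov towers.

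For the right arrow, I would argue that the map $F\to\varprojlim_b t_{\leq b}F$ is already an equivalence in $\Spt(\Sm_k)$ before applying $\calL_\et$. Since $t_{\leq b}$ is the schemewise truncation, it is computed objectwise, and limits in the functor category $\Spt(\Sm_k)=\Func(\Sm_k^{\mathrm{op}},\Spt)$ are also computed objectwise. Hence it suffices to know that $X\to\varprojlim_b t_{\leq b}X$ is an equivalence for every $X\in\Spt$, which is the standard Postnikov convergence in the $\infty$-category of spectra. Applying $\calL_\et$ then yields the right-hand equivalence.

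For the left arrow, the key input is that $\calL_\et$ commutes with the Postnikov limit $\varprojlim_b t_{\leq b}$. I would factor $\calL_\et=E_\et L_\et$, where $E_\et\colon\Spt_\et(\Sm_k)\to\Spt(\Sm_k)$ is the fully faithful embedding of hypercomplete étale sheaves, and note that $E_\et$ preserves all small limits as a right adjoint. It therefore remains to show
\[
L_\et F\;\simeq\;\varprojlim_b L_\et t_{\leq b}F \quad\text{in }\Spt_\et(\Sm_k).
\]
Here I would use the two essential ingredients already appearing in the paper: first, $L_\et$ is $t$-exact (see \cite[Example 2.2]{ClausenMathewHypdescetaleKth} and \cite[Remark 1.3.2.8]{LurieHA}), so that $L_\et t_{\leq b}F\simeq\tau^\et_{\leq b}L_\et F$, where $\tau^\et_{\leq b}$ denotes truncation in the hypercomplete étale $t$-structure; and second, the hypercomplete étale $\infty$-topos on $\Sm_k$ has convergent Postnikov towers, which is exactly the defining property of hypercompleteness, yielding $L_\et F\simeq\varprojlim_b\tau^\et_{\leq b}L_\et F$. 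Combining these gives the required equivalence in $\Spt_\et(\Sm_k)$, and applying $E_\et$ produces the left equivalence in \eqref{eq:varprojlimlLettleqlFleftarrowLetFtoLetvarprojlimltleqlF}.

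The step that requires the most care is the interchange $L_\et\varprojlim_b t_{\leq b}F\simeq\varprojlim_b L_\et t_{\leq b}F$, since $L_\et$ is a left adjoint and does not commute with limits in general. As indicated above, the argument proceeds not by commuting $L_\et$ past the limit directly, but by identifying both sides with $L_\et F$ via the Postnikov convergence in the hypercomplete target; the $t$-exactness of $L_\et$ is precisely what aligns the two truncation towers.
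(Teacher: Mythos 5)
The argument you give for the right-hand arrow is correct and is essentially what the paper does: schemewise $t$-truncations and objectwise limits reduce to Postnikov convergence in $\Spt$, and applying $\calL_\et$ gives the claim.

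For the left-hand arrow there is a genuine gap. You assert that the hypercomplete \'etale $\infty$-topos has convergent Postnikov towers ``which is exactly the defining property of hypercompleteness.'' That identification is false: hypercompleteness is locality with respect to $\infty$-connective morphisms (equivalently, when there are enough points, that stalkwise equivalences are equivalences), which is in general strictly weaker than Postnikov completeness --- this distinction is precisely the subject of parts of \cite{ClausenMathewHypdescetaleKth}. So you have assumed the substance of what needs to be proved. Your reduction via $\calL_\et = E_\et L_\et$ and the $t$-exactness of $L_\et$ is legitimate (and the same $t$-exactness is used in the paper's proof of \Cref{prop:Let(_A)}), but it only reduces the claim to Postnikov completeness of $\Spt_\et(\Sm_k)$, which still needs an argument. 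The paper closes exactly this gap by checking at the points of the topology: for strictly henselian local essentially smooth $U$, the sections $\calL_\et t_{\leq b}F(U)$ agree with the schemewise truncation $t_{\leq b}(F(U))$, so the map is a stalkwise equivalence by Postnikov convergence in $\Spt$; both sides are then observed to lie in $\Spt_\et(\Sm_k)$ (which is closed under limits by \cite[Def.~2.4(2)]{ClausenMathewHypdescetaleKth}), and a stalkwise equivalence between hypersheaves is an equivalence. Your proof could be repaired by inserting that stalkwise argument as the proof of Postnikov completeness for $\Spt_\et(\Sm_k)$, but as written the step is missing.
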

\begin{proof}
The right equivalence follows by $F\simeq\varprojlim_b t_{\geq b}F$.
The left morphism is a h. \'et. l. equivalence
because
\[\pi_n \calL_{\et}t_{\leq b}F(U)\simeq \begin{cases}
    0 , &n>b,\\
    \pi_n F(U), &n\leq b,
\end{cases}\]
and consequently,
$\pi_n\varprojlim_{b}\calL_{\et}t_{\leq b}F(U)\simeq \pi_n F(U)$
for any strictly henselian essentially smooth $U$.
Since
the subcategory $\SptSet(\Sm_k)$ is closed with respect to limits
by \cite[Definition 2.4 (2)]{ClausenMathewHypdescetaleKth},
and
$\calL_{\et}F$ and $\calL_{\et}t_{\leq b}F$ are \'etale hypersheaves by definition
of $\calL_\et$, see Notation \ref{notation:hypet},
the middle and left term in \eqref{eq:varprojlimlLettleqlFleftarrowLetFtoLetvarprojlimltleqlF}
are \'etale hypersheaves. 
Hence the left morphism in \eqref{eq:varprojlimlLettleqlFleftarrowLetFtoLetvarprojlimltleqlF} is an equivalence.
\end{proof}

\printbibliography
\end{document}